\newcommand{\R}{\mathbb{R}}
\newcommand{\N}{\mathbb{N}}
\newcommand{\Z}{\mathbb{Z}}
\newcommand{\D}{\mathcal{D}}
\newcommand{\T}{\mathbb{T}}
\newcommand{\esslim}{\operatornamewithlimits{ess\,lim}}
\newcommand{\essinf}{\operatornamewithlimits{ess\,inf}}
\newcommand{\esssup}{\operatornamewithlimits{ess\,sup}}
\newcommand{\sign}{\operatorname{sign}}
\newcommand{\meas}{\operatorname{meas}}
\newcommand{\Co}{\operatorname{co}}
\newcommand{\MV}{\operatorname{MV}}
\newcommand{\const}{\mathrm{const}}
\renewcommand{\div}{\operatorname{div}}
\newcommand{\supp}{\operatorname{supp}}
\newtheorem{theorem}{Theorem}[section]
\newtheorem{lemma}{Lemma}[section]
\newtheorem{corollary}{Corollary}[section]
\newtheorem{proposition}{Proposition}[section]
\theoremstyle{definition}
\newtheorem{definition}{Definition}[section]
\theoremstyle{remark}
\newtheorem{remark}{Remark}[section]
\newtheorem{example}{Example}[section]
\date{}
\numberwithin{equation}{section}
\title{On entropy solutions of scalar conservation laws with discontinuous flux}
\author{Evgeny~Yu.~Panov \\
Yaroslav-the-Wise Novgorod State University, Russia}
\begin{document}

\maketitle

\begin{abstract}
We introduce the notion of entropy solutions (e.s.) to a conservation law with an arbitrary jump continuous flux vector and prove existence of the largest and the smallest e.s. to the Cauchy problem. The monotonicity and stability properties of these solutions are also established. In the case of a periodic initial function we derive the uniqueness of e.s. Generally, the uniqueness property can be violated, which is confirmed by an example. Finally, we proved
that in the case of single space variable a weak limit of a sequence of spatially periodic e.s. is an e.s. as well.
\end{abstract}

\section{Introduction}
In the half-space $\Pi=\R_+\times\R^n$, where $\R_+=(0,+\infty)$, we consider the conservation law
\begin{equation}\label{1}
u_t+\div_x\varphi(u)=0
\end{equation}
with a jump continuous flux vector $\varphi(u)=(\varphi_1(u),\ldots,\varphi_n(u))$. This means that at each point $u_0\in\R$ there exist one-sided limits
$\displaystyle\lim_{u\to u_0\pm}\varphi(u)\doteq\varphi(u_0\pm)$. For example, if the components $\varphi_i(u)$, $i=1,\ldots,n$, are BV-functions then the vector $\varphi(u)$ is jump-continuous. It is known that the set
$$
D=\{ \ u_0\in\R \ | \ |\varphi(u_0+)-\varphi(u_0)|+|\varphi(u_0)-\varphi(u_0-)|>0 \ \}
$$
of discontinuity points of the vector $\varphi(u)$ is at most countable (and may be an arbitrary at most countable set in $\R$). We used above and will use in the sequel the notation $|\cdot|$ for Euclidean finite-dimensional norms (including the absolute value in one-dimensional case). We will treat $\varphi(u)$ as a multi-valued vector function with values
$\bar\varphi(u_0)=[\varphi(u_0-),\varphi(u_0)]\cup [\varphi(u_0),\varphi(u_0+)]$ being a union of two segments in $\R^n$.
(one may use even more general continuous curves connecting $\varphi(u_0-)$ with $\varphi(u_0+)$ and passing through $\varphi(u_0)$).
Clearly, these sets are different from a single points only if $u_0\in D$. Let us demonstrate that the graph of $\bar\varphi(u)$ admits a continuous parametrization
\begin{equation}\label{cpar}
u=b(v)\in C(\R), \quad \bar\varphi(u)\ni g(v)\in C(\R,\R^n),
\end{equation}
such that the function $b(v)$ is non-strictly increasing and coercive, i.e., $b(v)\to\pm\infty$ as $v\to\pm\infty$.
This was shown in paper \cite{BGMS11}, but only in the case when the set $D$ admits monotone numeration $D=\{u_k\}$, $k\in\N$, $u_{k+1}>u_k$ $\forall k\in\N$, i.e., when $D$ is a completely ordered subset of $\R$. In the following lemma we construct the required parametrization for the general case.

\begin{lemma}\label{lem1}
There exists a parametrization (\ref{cpar}) with a non-strictly increasing and coercive $b(v)$.
\end{lemma}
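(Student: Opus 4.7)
The plan is to exhibit the required parametrization by ``stretching'' the real line along the discontinuity set $D$: at each $u_k\in D$ we insert an interval on which $b$ is held constant equal to $u_k$, providing room for $g$ to traverse the double segment $\bar\varphi(u_k)$ continuously.

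\textbf{Step 1 (enumeration and weights).} Enumerate $D=\{u_k\}_{k\in\N}$ in any order (possibly finite or empty). Fix positive weights $\varepsilon_k>0$ with $\sum_k\varepsilon_k<\infty$, e.g.\ $\varepsilon_k=2^{-k}$.

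\textbf{Step 2 (auxiliary stretching).} Define $\psi:\R\to\R$ by
$$
\psi(u)=u+\sum_{u_k<u}\varepsilon_k.
$$
Then $\psi$ is strictly increasing, left-continuous, has jump $\varepsilon_k$ at each $u_k$, and satisfies $\psi(u)-u$ bounded, whence $\psi(u)\to\pm\infty$ as $u\to\pm\infty$. Set $a_k=\psi(u_k)$ and $I_k=[a_k,a_k+\varepsilon_k]$; the image of $\psi$ is $\R\setminus\bigcup_k(a_k,a_k+\varepsilon_k)$.

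\textbf{Step 3 (definition of $b$).} Put $b(v)=u_k$ for $v\in I_k$ and $b(v)=\psi^{-1}(v)$ otherwise. The two prescriptions agree on the endpoints of $I_k$, so $b$ is well defined. A direct check shows $b$ is continuous, non-strictly increasing, and coercive.

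\textbf{Step 4 (definition of $g$).} Off $\bigcup_k I_k$ set $g(v)=\varphi(b(v))$. On each $I_k$ parametrize the polygonal path $\varphi(u_k-)\to\varphi(u_k)\to\varphi(u_k+)$ affinely: on $[a_k,a_k+\varepsilon_k/2]$ go linearly from $\varphi(u_k-)$ to $\varphi(u_k)$, and on $[a_k+\varepsilon_k/2,a_k+\varepsilon_k]$ go linearly from $\varphi(u_k)$ to $\varphi(u_k+)$. By construction $g(v)\in\bar\varphi(b(v))$ everywhere.

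\textbf{Step 5 (continuity of $g$).} Inside each $I_k$, $g$ is piecewise linear. At $v=a_k$ we have $g(a_k)=\varphi(u_k-)$; approaching from the left through $v_n$ with $b(v_n)\uparrow u_k$ along continuity points of $\varphi$, $\varphi(b(v_n))\to\varphi(u_k-)$, so continuity holds. The symmetric argument works at $a_k+\varepsilon_k$. The delicate case is a point $v_*$ corresponding to $u_*=b(v_*)$ which is an accumulation point of $D$. Here one uses the jump-continuity of $\varphi$: if $u_{k_n}\to u_*$ from below, then both $\varphi(u_{k_n}\pm)$ and the actual values $\varphi(u_{k_n})$ (evaluated as ordinary function values along the sequence $u_{k_n}\uparrow u_*$) tend to $\varphi(u_*-)$. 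Hence $\operatorname{diam}\bar\varphi(u_{k_n})\to 0$ and $\bar\varphi(u_{k_n})$ collapses to $\{\varphi(u_*-)\}$, which forces $g$ to approach the correct limit on every sequence $v\to v_*-$, irrespective of whether $v$ lies in the intervals $I_{k_n}$ or outside them.

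\textbf{Main obstacle.} The only nontrivial analytic step is the continuity of $g$ in Step~5 at points corresponding to accumulation points of $D$, since the intervals $I_k$ may cluster arbitrarily densely around $v_*$. The argument rests on the observation (not immediately evident from the definition) that jump-continuity of $\varphi$ controls not only the one-sided limits $\varphi(u_k\pm)$ but also the raw values $\varphi(u_k)$ along any convergent sequence $u_k\to u_*$, so that the two-segment sets $\bar\varphi(u_k)$ shrink uniformly onto $\varphi(u_*\pm)$.
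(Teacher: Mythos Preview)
Your proposal is correct and follows essentially the same route as the paper. The construction is identical: the paper also inserts intervals of prescribed lengths $h_k$ at the discontinuity points by defining $\alpha(u)=u+\mu((-\infty,u))$ with $\mu=\sum h_k\delta(\cdot-u_k)$ (your $\psi$), takes $b$ to be its monotone inverse, and defines $g$ to be $\varphi\circ b$ off the inserted segments and the same two-piece linear interpolation on them; the continuity argument likewise reduces to the observation that near any point $u_0$ all three quantities $\varphi(u)$, $\varphi(u-)$, $\varphi(u+)$ are close to the appropriate one-sided limit at $u_0$, which is exactly the estimate you invoke in Step~5 and in your ``Main obstacle'' paragraph.
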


\begin{proof}
We consider the more complicated case when $D$ is infinite (in the case of finite $D$ we only need to replace the set $\N$ in the proof below by its finite subset).
We numerate set $D$: $D=\{u_k\}_{k\in\N}$ and choose positive numbers $h_k$ such that $\displaystyle\sum_{k=1}^\infty h_k=c<\infty$ (we can take $h_k=2^{-k}$). We define the finite discrete measure $\displaystyle\mu(u)=\sum_{k=1}^\infty h_k\delta(u-u_k)$, where by
$\delta(u-u_k)$ we denote the Dirac mass at the point $u_k$. Then we introduce the strictly increasing function
$\alpha(u)=u+\mu((-\infty,u))$ with jumps at points in $D$. Notice that
\begin{equation}\label{est1}
u\le \alpha(u)\le u+c; \quad \alpha(u_2)-\alpha(u_1)\ge u_2-u_1  \ \forall u_1,u_2\in\R, u_2>u_1.
\end{equation}
The function $b(v)$ is defined as the inverse to the function $\alpha(u)$ considered as maximal monotone graph, that is,
the value $b(v)$ is such $u\in\R$ that $v\in [\alpha(u-),\alpha(u+)]$. It follows from (\ref{est1}) that
$v-c\le b(v)\le v$. If $v_1<v_2$ then denoting $u_i=b(v_i)$, $i=1,2$, we have $v_1\le\alpha(u_1+)\le\alpha(u_2-)\le v_2$ whenever $u_1<u_2$. This relations implies that $v_2-v_1\ge \alpha(u_2-)-\alpha(u_1+)\ge u_2-u_1=b(v_2)-b(v_1)$.
Hence, $b(v_2)-b(v_1)\le v_2-v_1$. In the case $u_1=u_2$ we see that $b(v_2)=b(v_1)$ and the inequality $b(v_2)-b(v_1)\le v_2-v_1$ is evident. The obtained inequality can be written in the form $|b(v_2)-b(v_1)|\le |v_2-v_1|$. We find that $b(v)$ is Lipschitz continuous. Notice also that $b(v)$ takes values $u_k\in D$ on the segments $[a_k,b_k]=[\alpha(u_k-),\alpha(u_k+)]$ of length $h_k>0$. To define the vector $g(v)$, we have to set
$g(v)=\varphi(b(v))$ whenever $b(v)\notin D$. If $b(v)=u_k\Leftrightarrow v\in [a_k,b_k]$ we set $g(v)$ being the piecewise linear function
\begin{equation}\label{ext}
g(v)=\left\{\begin{array}{lcr} \displaystyle\frac{(c_k-v)\varphi(u_k-)+(v-a_k)\varphi(u_k)}{c_k-a_k} & , & a_k\le v\le c_k, \\
\displaystyle\frac{(b_k-v)\varphi(u_k)+(v-c_k)\varphi(u_k+)}{b_k-c_k} & , & c_k\le v\le b_k,
\end{array}\right.
\end{equation}
where $c_k=(a_k+b_k)/2$ (or some other point between $a_k$ and $b_k$).
Let us show that the vector $g(v)$ is continuous on $\R$. We verify that $g(v)$ is continuous at each point $v_0\in\R$.
It is clear if $v_0\in (a_k,b_k)$ for some $k\in\N$, in view of (\ref{ext}). Further, suppose that $v_0\notin [a_k,b_k]$ for all $k\in\N$. This means that
$u_0=b(v_0)\notin D$ and $\varphi(u)$ is continuous at $u_0$. Therefore, for every $\varepsilon>0$ there exists such a $\delta>0$ that in the interval $|u-u_0|<2\delta$ $|\varphi(u)-\varphi(u_0)|<\varepsilon$. This implies
that
\begin{equation}\label{3}
\max(|\varphi(u)-\varphi(u_0)|,|\varphi(u-)-\varphi(u_0)|,|\varphi(u+)-\varphi(u_0)|)\le\varepsilon \quad \forall u\in\R, |u-u_0|<\delta.
\end{equation}
If $|v-v_0|<\delta$ then $|b(v)-u_0|\le |v-v_0|<\delta$ and taking into account (\ref{ext}) and (\ref{3}) we conclude
$$
|g(v)-g(v_0)|\le \max(|\varphi(b(v))-\varphi(u_0)|,|\varphi(b(v)-)-\varphi(u_0)|,|\varphi(b(v)+)-\varphi(u_0)|)\le\varepsilon.
$$
Since $\varepsilon>0$ is arbitrary, this means continuity of $g(v)$ at point $v_0$. By the similar reasons we prove that
$$
\lim_{v\to a_k-} g(v)=\varphi(u_k-)=g(a_k), \ \lim_{v\to b_k+} g(v)=\varphi(u_k+)=g(b_k) \ \forall k\in\N.
$$
Since, in view of (\ref{ext}),
$$
\lim_{v\to a_k+} g(v)=g(a_k), \ \lim_{v\to b_k-} g(v)=g(b_k),
$$
we find that the vector $g(v)$ is continuous at remaining points $v=a_k,b_k$, $k\in\N$. The proof is complete.
\end{proof}

At least formally after the change $u=b(v)$ equation (\ref{1}) reduces to the equation
\begin{equation}\label{1'}
b(v)_t+\div_x g(v)=0
\end{equation}
with already continuous flux $(b(v),g(v))\in\R^{n+1}$.

Recall that entropy solution (e.s.) of equation (\ref{1'}) is a function $v=v(t,x)\in L^\infty(\Pi)$ satisfying
the Kruzhkov entropy condition: $\forall k\in\R$
\begin{equation}\label{ent}
|b(v)-b(k)|_t+\div_x[\sign(v-k)(g(v)-g(k))]\le 0
\end{equation}
in the sense of distributions on $\Pi$ (in $\D'(\Pi))$. This means that for each test function $f=f(t,x)\in C_0^1(\Pi)$, $f\ge 0$
\begin{equation}\label{enti}
\int_\Pi [|b(v)-b(k)|f_t+\sign(v-k)(g(v)-g(k))\cdot\nabla_xf]dtdx\ge 0.
\end{equation}
Taking $k=\pm\|v\|_\infty$, we derive from (\ref{ent}) that $b(v)_t+\div_x g(v)=0$ in $\D'(\Pi)$ and e.s. $v=v(t,x)$ of (\ref{1'}) is a weak solution of this equation.
We study the Cauchy problem for equations (\ref{1}), (\ref{1'}) with initial condition
\begin{equation}\label{ini}
u(0,x)=b(v)(0,x)=u_0(x)\in L^\infty(\R^n).
\end{equation}
This condition is understood in the sense of relation
\begin{equation}\label{ini1}
\esslim_{t\to 0} u(t,\cdot)=u_0 \ \mbox{ in } L^1_{loc}(\R^n).
\end{equation}
It is rather well known (cf. \cite[Proposition~2]{Pan02}) that conditions (\ref{ent}), (\ref{ini1}) can be written in the form of single integral inequality similar to (\ref{enti}): for all $k\in\R$ and each non-negative test function $f=f(t,x)\in C_0^1(\bar\Pi)$, where $\bar\Pi=[0,+\infty)\times\R^n$ being the closure of $\Pi$,
\begin{equation}\label{enti1}
\int_\Pi [|b(v)-b(k)|f_t+\sign(v-k)(g(v)-g(k))\cdot\nabla_xf]dtdx+\int_{\R^n}|u_0(x)-b(k)|f(0,x)dx\ge 0.
\end{equation}
Notice that any jump continuous function is Borel and locally bounded. Therefore, $\varphi(u)\in L^\infty(\Pi)$ for all $u=u(t,x)\in L^\infty(\Pi)$, and we can define the notion of e.s. of original problem (\ref{1}), (\ref{ini}) by the standard Kruzhkov relation like (\ref{enti1})
\begin{equation}\label{enti1a}
\int_\Pi [|u-k|f_t+\sign(u-k)(\varphi(u)-\varphi(k))\cdot\nabla_xf]dtdx+\int_{\R^n}|u_0(x)-k|f(0,x)dx\ge 0
\end{equation}
for all $k\in\R$, $f=f(t,x)\in C_0^1(\bar\Pi)$, $f\ge 0$.
But such e.s. may not exist, see Example~\ref{ex2} below. For the correct definition we need multivalued extension of the flux at discontinuity points and the described above reduction to the well established case of continuous flux.
Apparently, the multivalued extension of the flux was used firstly in \cite{DFR} in the case of some model equation arising in phase transitions.

In the sequel, we need the more general class of measure-valued solutions. Recall
(see \cite{Di,Ta}) that a measure-valued function on $\Pi$ is a weakly measurable map
$(t,x)\to\nu_{t,x}$ of $\Pi$ into the space $\mathrm{Prob}_0(\R)$ of probability Borel measures with
compact support in $\R$.
The weak measurability of $\nu_{t,x}$ means that for each continuous function $p(v)$,
the function $(t,x)\to\langle\nu_{t,x},p(v)\rangle\doteq\int p(v)d\nu_{t,x}(v)$ is Lebesgue-measurable on $\Pi$.
We say that a measure-valued function $\nu_{t,x}$ is bounded if there exists such $R>0$
that $\supp\nu_{t,x}\subset [-R,R]$ for almost all $(t,x)\in\Pi$. We shall denote by $\|\nu_{t,x}\|_\infty$
the smallest of such $R$.
Finally, we say that measure-valued functions of the kind $\nu_{t,x}(v)=\delta(v-v(t,x))$,
where $v(t,x)\in L^\infty(\Pi)$ and $\delta(v-v_*)$ is the Dirac measure at a point $v_*\in\R$, are regular.
We identify these measure-valued functions and the corresponding functions $v(t,x)$,
so that there is a natural embedding $L^\infty(\Pi)\subset \MV(\Pi)$, where by $\MV(\Pi)$ we denote the set of
bounded measure-valued functions on $\Pi$.
Measure-valued functions naturally arise as weak limits of bounded sequences
in $L^\infty(\Pi)$ in the sense of the following theorem by L.~Tartar \cite{Ta}.

\begin{theorem}\label{thT}
Let $v_k(t,x)\in L^\infty(\Pi)$, $k\in\N$, be a bounded sequence. Then there exist a subsequence (we
keep the notation $v_k(t,x)$ for this subsequence) and a bounded measure valued function $\nu_{t,x}\in\MV(\Pi)$
such that
\begin{equation} \label{pr2} \forall p(v)\in C(\R) \quad p(v_k)
\mathop{\to}_{k\to\infty}\langle\nu_{t,x},p(v)\rangle \quad\text{weakly-\/$*$ in } L^\infty(\Pi).
\end{equation}
Besides, $\nu_{t,x}$ is regular, i.e., $\nu_{t,x}(v)=\delta(v-v(t,x))$ if and only if $v_k(t,x)
\mathop{\to}\limits_{k\to\infty} v(t,x)$ in $L^1_{loc}(\Pi)$ (strongly).
\end{theorem}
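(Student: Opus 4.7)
The plan is to construct $\nu_{t,x}$ via weak-$*$ compactness in $L^\infty(\Pi)$ combined with a fibrewise Riesz representation argument, and then deduce the regularity characterization from an $L^2$ identity. By hypothesis there is $M>0$ with $\|v_k\|_\infty\le M$ for all $k$. I would fix a countable set $\{p_j\}\subset C([-M,M])$ dense in the sup norm and closed under rational linear combinations. Each sequence $p_j(v_k)$ is bounded in $L^\infty(\Pi)$, so by sequential weak-$*$ compactness of bounded sets in $L^\infty(\Pi)=(L^1(\Pi))^*$ and a diagonal extraction, one obtains a single subsequence (still denoted $v_k$) along which $p_j(v_k)\rightharpoonup w_j$ weakly-$*$ in $L^\infty(\Pi)$ for every $j$. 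By the uniform bound $|p(v_k)|\le\max_{[-M,M]}|p|$, the linear map $p_j\mapsto w_j$ is bounded on the dense subspace and extends uniquely to a continuous linear operator $L\colon C([-M,M])\to L^\infty(\Pi)$, and $L(p)$ is the weak-$*$ limit of $p(v_k)$ for every continuous $p$.

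To turn $L$ into a genuine measure-valued function, I would fix representatives of the $w_j$'s and discard the countably many null sets on which the rational linear relations and order inequalities among the $p_j$'s fail to transfer to the $w_j$'s (note that $p_j\ge 0$ on $[-M,M]$ implies $w_j\ge 0$ a.e., since weak-$*$ limits of nonnegative functions tested against nonnegative $L^1$ functions are nonnegative; in particular $w_j=1$ a.e.\ when $p_j\equiv 1$). On the resulting full-measure set, $p_j\mapsto w_j(t,x)$ is a positive unital rational-linear functional on a dense subspace of $C([-M,M])$, hence extends uniquely to a positive unital linear functional on $C([-M,M])$; by the Riesz--Markov theorem it equals integration against a unique Borel probability measure $\nu_{t,x}$ with $\supp\nu_{t,x}\subset[-M,M]$. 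Since $(t,x)\mapsto\langle\nu_{t,x},p\rangle$ coincides a.e.\ with the measurable function $L(p)$, the weak measurability of $\nu_{t,x}$ follows and (\ref{pr2}) holds for every $p\in C(\R)$. The main care point is coordinating the countable families of null sets so that one $\nu_{t,x}$ serves all continuous $p$ simultaneously; having fixed the dense countable algebra $\{p_j\}$ this is routine bookkeeping, and it is the step most likely to be mishandled if attempted less carefully.

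For the regularity equivalence, the direction $v_k\to v$ in $L^1_{loc}(\Pi)\Rightarrow\nu_{t,x}=\delta(v-v(t,x))$ is immediate: a further subsequence converges a.e., so continuity of $p$ together with the uniform bound and dominated convergence give $p(v_k)\to p(v)$ in $L^1_{loc}$, hence weakly-$*$; uniqueness of the weak-$*$ limit yields $\langle\nu_{t,x},p\rangle=p(v(t,x))$ for every $p\in C(\R)$, which forces $\nu_{t,x}$ to be the Dirac mass at $v(t,x)$. Conversely, if $\nu_{t,x}=\delta(v-v(t,x))$, applying (\ref{pr2}) with $p(v)=v$ and $p(v)=v^2$ yields, for every compact $K\subset\Pi$,
\begin{equation*}
\int_K(v_k-v)^2\,dtdx=\int_K v_k^2\,dtdx-2\int_K v_k\,v\,dtdx+\int_K v^2\,dtdx\longrightarrow 0,
\end{equation*}
since the first integral tends to $\int_K v^2\,dtdx$ (test the weak-$*$ limit against $\chi_K\in L^1$) and the middle one to $2\int_K v^2\,dtdx$ (test against $v\chi_K\in L^1$, which is permitted because $v\in L^\infty$ and $K$ has finite measure). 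Hence $v_k\to v$ in $L^2_{loc}(\Pi)$, and therefore in $L^1_{loc}(\Pi)$, completing the proof.
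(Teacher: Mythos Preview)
The paper does not actually prove Theorem~\ref{thT}: it is stated as a classical result of Tartar with a citation to \cite{Ta} (and to \cite{Di}), and is used as a tool throughout the paper without any argument supplied. There is therefore no ``paper's own proof'' to compare your proposal against.

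On its own merits your argument is correct and follows the standard route: diagonal extraction on a countable dense family in $C([-M,M])$, pointwise Riesz--Markov representation after discarding a countable union of null sets, and the $L^2$ identity using $p(v)=v$ and $p(v)=v^2$ for the regularity characterization. One small imprecision: in the direction ``$v_k\to v$ in $L^1_{loc}$ implies $\nu_{t,x}$ regular'' you pass to a further a.e.-convergent subsequence and then conclude $p(v_k)\to p(v)$ in $L^1_{loc}$ for the \emph{full} sequence. Strictly speaking, the a.e.\ subsequence only gives convergence along that subsequence; the clean way to conclude for the full sequence is to invoke the subsequence principle (every subsequence of $v_k$ has a further a.e.-convergent subsubsequence, so $p(v_k)\to p(v)$ in $L^1_{loc}$ along every such subsubsequence, hence along the whole sequence). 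This is routine and does not affect the validity of the argument.
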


More generally, the following weak precompactness property holds for bounded sequences of measure valued function, see
for instance \cite[Theorem~2]{Pan95}.

\begin{theorem}\label{thTa}
Let $\nu^k_{t,x}\in MV(\Pi)$, $k\in\N$, be a bounded sequence (this means that the scalar sequence $\|\nu^k_{t,x}\|_\infty$ is bounded). Then there exists a subsequence $\nu^k_{t,x}$ (not relabeled) weakly convergent to a bounded measure valued function $\nu_{t,x}\in\MV(\Pi)$ in the sense of relation
\begin{equation} \label{pr2a} \forall p(v)\in C(\R) \quad \langle\nu^k_{t,x},p(v)\rangle
\mathop{\to}_{k\to\infty}\langle\nu_{t,x},p(v)\rangle \quad\text{weakly-\/$*$ in } L^\infty(\Pi).
\end{equation}
\end{theorem}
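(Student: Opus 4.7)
The plan is to extract a weakly-$*$ convergent subsequence against a countable dense family of continuous test functions, extend the convergence to all of $C(\R)$ by uniform approximation, and finally reconstruct the limit measure-valued function pointwise via the Riesz representation theorem.

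Fix $R>0$ with $\supp\nu^k_{t,x}\subset[-R,R]$ for all $k$ and a.e.\ $(t,x)$. Let $\{p_j\}_{j\in\N}\subset C([-R,R])$ be the countable set of polynomials with rational coefficients, which is dense by the Weierstrass theorem, is closed under rational linear combinations, and contains the constant $1$. For each $j$ the sequence $(t,x)\mapsto\langle\nu^k_{t,x},p_j\rangle$ lies in the ball of radius $\|p_j\|_\infty$ of $L^\infty(\Pi)$, because each $\nu^k_{t,x}$ is a probability measure. Since $L^1(\Pi)$ is separable, a diagonal application of Banach--Alaoglu yields a subsequence, not relabeled, and functions $\alpha_j\in L^\infty(\Pi)$ with $\langle\nu^k_{t,x},p_j\rangle\to\alpha_j$ weakly-$*$ in $L^\infty(\Pi)$ simultaneously for all $j\in\N$.

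Next I extend the convergence to arbitrary $p\in C(\R)$. For any $f\in L^1(\Pi)$, choose $p_j$ with $\sup_{[-R,R]}|p-p_j|<\varepsilon$; then
\[
\left|\int_\Pi\langle\nu^k_{t,x},p-p_j\rangle f\,dtdx\right|\le\varepsilon\|f\|_{L^1(\Pi)}
\]
uniformly in $k$, so the scalar sequence $\int_\Pi\langle\nu^k_{t,x},p\rangle f\,dtdx$ is Cauchy and $\langle\nu^k_{t,x},p\rangle\to\alpha(p)$ weakly-$*$ in $L^\infty(\Pi)$ for some $\alpha(p)\in L^\infty(\Pi)$. The map $p\mapsto\alpha(p)$ is linear, satisfies $\alpha(1)=1$, $\|\alpha(p)\|_\infty\le\|p\|_\infty$, and gives $\alpha(p)\ge 0$ a.e.\ whenever $p\ge 0$.

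To produce $\nu_{t,x}$, observe that the identities $\alpha(r_1 p_{j_1}+r_2 p_{j_2})=r_1\alpha_{j_1}+r_2\alpha_{j_2}$ for $r_1,r_2\in\Q$, the inequalities $\alpha_j\ge 0$ whenever $p_j\ge 0$, the normalization corresponding to $p=1$, and the bounds $|\alpha_j|\le\|p_j\|_\infty$ each hold outside a Lebesgue null set; their countable union $N\subset\Pi$ is null. For $(t,x)\notin N$ the assignment $p_j\mapsto\alpha_j(t,x)$ is a $\Q$-linear positive functional of norm $1$ on a $\Q$-dense subspace of $C([-R,R])$; extend it by continuity to a positive linear functional of norm $1$ on $C([-R,R])$, and by the Riesz--Markov theorem represent it as a probability Borel measure $\nu_{t,x}$ supported in $[-R,R]$. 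On $N$ set $\nu_{t,x}$ to be any fixed Dirac mass. Weak measurability of $(t,x)\mapsto\nu_{t,x}$ holds because for each $j$ the function $\langle\nu_{t,x},p_j\rangle$ agrees a.e.\ with $\alpha_j(t,x)$ and is thus measurable, and uniform approximation propagates measurability to all $p\in C(\R)$; moreover $\langle\nu_{t,x},p\rangle=\alpha(p)(t,x)$ a.e., yielding (\ref{pr2a}). The main technical obstacle is managing the null exceptional set $N$ uniformly across the countably many algebraic relations so that the pointwise Riesz construction delivers a genuinely measurable family $\nu_{t,x}$ and the identification of $\alpha(p)$ with $\langle\nu_{t,x},p\rangle$ holds for every, and not merely a dense set of, $p\in C(\R)$.
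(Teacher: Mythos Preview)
The paper does not supply its own proof of this theorem; it is merely quoted from \cite[Theorem~2]{Pan95}. Your argument is the standard one and is correct: extract weak-$*$ limits against a countable dense family in $C([-R,R])$ by a diagonal application of Banach--Alaoglu, assemble the limits pointwise off a countable union of null sets into a positive normalized functional, invoke Riesz--Markov to obtain $\nu_{t,x}$, and finally pass from the dense family to all of $C(\R)$ by uniform approximation.

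Two small remarks. First, you do not need to track positivity separately for the extended functional: once you know $|L(q)|\le\|q\|_\infty$ and $L(1)=1$, positivity is automatic, since $0\le p\le M$ gives $|L(p)-M/2|=|L(p-M/2)|\le\|p-M/2\|_\infty\le M/2$, hence $L(p)\ge 0$. Second, your closing sentence reads as a caveat rather than a resolution, but you have in fact handled the point. The exceptional set $N$ is a countable union of null sets coming from the countably many algebraic identities among the $\alpha_j$; for general $p\in C(\R)$ choose $p_{j_m}\to p$ uniformly on $[-R,R]$, so that $\alpha_{j_m}\to\alpha(p)$ in $L^\infty(\Pi)$ while $\alpha_{j_m}(t,x)=\langle\nu_{t,x},p_{j_m}\rangle\to\langle\nu_{t,x},p\rangle$ pointwise for $(t,x)\notin N$. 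This yields $\langle\nu_{t,x},p\rangle=\alpha(p)(t,x)$ a.e.\ (the null set may depend on $p$, which is harmless since the statement concerns equality in $L^\infty(\Pi)$), and weak measurability follows because $(t,x)\mapsto\langle\nu_{t,x},p\rangle$ is a pointwise limit of measurable functions.
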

Obviously, in the case when the sequence $\nu^k_{t,x}$ consists of regular functions $v_k$, relation (\ref{pr2a}) reduces to (\ref{pr2}). Remark that in Theorems~\ref{thT}, \ref{thTa}  the half-space $\Pi$ may be replaces by arbitrary finite-dimensional domain $\Omega$.

Recall (see \cite{Di,Pan96}) that a measure valued e.s. of (\ref{1'}), (\ref{ini}) is a bounded measure valued function
$\nu_{t,x}\in\MV(\Pi)$, which satisfies the following averaged variant of entropy relation (\ref{enti1}): for all $k\in\R$, $f=f(t,x)\in C_0^1(\bar\Pi)$, $f\ge 0$
\begin{align}\label{entim}
\int_\Pi \left[\int|b(v)-b(k)|d\nu_{t,x}(v)f_t+\int\sign(v-k)(g(v)-g(k))d\nu_{t,x}(v)\cdot\nabla_xf\right]dtdx+ \nonumber\\
\int_{\R^n}|u_0(x)-b(k)|f(0,x)dx\ge 0.
\end{align}
Now we are ready to define the notion of e.s. of original problem (\ref{1}), (\ref{ini}).

\begin{definition}[cf. \cite{BGMS11}]\label{def1}
A function $u=u(t,x)\in L^\infty(\Pi)$ is called an e.s. of problem (\ref{1}), (\ref{ini}) if there exists a measure valued e.s. $\nu_{t,x}(v)$ of (\ref{1'}), (\ref{ini}) such that the push-forward measure $b^*\nu_{t,x}(u)$ coincides with the Dirac mass $\delta(u-u(t,x))$ for a.e. $(t,x)\in\Pi$.
\end{definition}

In view of the requirement $b^*\nu_{t,x}(u)=\delta(u-u(t,x))$ entropy relation (\ref{entim}) can be written as
\begin{align}\label{entim1}
\int_\Pi \left[|u-b(k)|f_t+\int\sign(v-k)(g(v)-g(k))d\nu_{t,x}(v)\cdot\nabla_xf\right]dtdx+ \nonumber\\
\int_{\R^n}|u_0(x)-b(k)|f(0,x)dx\ge 0.
\end{align}

\begin{remark}\label{rem1}
If $u(t,x)$ is an e.s. of (\ref{1}), (\ref{ini}) then $u=-u(t,x)$ is an e.s. of the problem
\begin{equation}\label{1-}
u_t-\div_x\varphi(-u)=0, \quad u(0,x)=-u_0(x)
\end{equation}
regarding to the continuous parametrization $u=-b(-v)$, $-\bar\varphi(-u)=-g(-v)$ of the flux.
In fact, let $\nu_{t,x}$ be a measure valued e.s. of (\ref{1'}), (\ref{ini}) such that $b^*\nu_{t,x}(u)=\delta(u-u(t,x))$.
Then the measure valued function $\tilde\nu_{t,x}=l^*\nu_{t,x}\in\MV(\Pi)$, where $l(v)=-v$, is a measure valued e.s. of the problem (\ref{1-}). In fact, for each $k\in\R$
\begin{align*}
\int |-b(-v)-(-b(-k))|d\tilde\nu_{t,x}(v)=\int |b(v)-b(-k)|d\nu_{t,x}(v)=|u-b(-k)|, \\
\int \sign(v-k)(-g(-v)-(-g(-k)))d\tilde\nu_{t,x}(v)=\int \sign(v+k)(g(v)-g(-k))d\nu_{t,x}(v), \\ |-u_0(x)-(-b(-k))|=|u_0(x)-b(-k)|
\end{align*}
and these equalities imply that for every $f=f(t,x)\in C_0^1(\bar\Pi)$, $f\ge 0$
\begin{align*}
\int_\Pi \left[\int|-b(-v)-(-b(-k))|d\tilde\nu_{t,x}(v)f_t+\right. \\ \left.\int\sign(v-k)(-g(-v)-(-g(-k)))d\tilde\nu_{t,x}(v)\cdot\nabla_xf\right]dtdx+ \\
\int_{\R^n}|-u_0(x)-(-b(-k))|f(0,x)dx=\\
\int_\Pi \left[\int|b(v)-b(-k)|d\nu_{t,x}(v)f_t+\int\sign(v+k)(g(v)-g(-k))d\nu_{t,x}(v)\cdot\nabla_xf\right]dtdx+ \\
\int_{\R^n}|u_0(x)-b(-k)|f(0,x)dx\ge 0
\end{align*}
by entropy relation (\ref{entim}) with $k$ replaced by $-k$. Further,
$$
(-b(-\cdot))^*\tilde\nu_{t,x}(u)=(-b)^*\nu_{t,x}(u)=l^*\delta(u-u(t,x))=\delta(u-(-u(t,x))).
$$
We conclude that $-u(t,x)$ satisfies all the requirement of Definition~\ref{def1} for the problem (\ref{1-}).
\end{remark}

\begin{remark}\label{rem2}
The notion of e.s. does not depend on the choice of parametrization (\ref{cpar}). This follow from the following observation. Let
$$
u=b(v(s)), \quad \bar\varphi(u)\ni g(v(s))
$$
be another parametrization, where $v(s)$ is a continuous increasing and coercive function on $\R$. Then $\tilde\nu_{t,x}(s)$ is a measure valued e.s. of the equation
$$
b(v(s))_t+\div_x g(v(s))=0
$$
if and only if the push-forward measure valued function $\nu_{t,x}=(v^*\tilde\nu_{t,x})(v)$ is a measure valued e.s. of (\ref{1'}).
\end{remark}

In \cite{BGMS11} (also see \cite{BGS13,GSWZ14}) the existence and uniqueness of e.s. were established only in the case
of integrable initial function $u_0\in L^1(\R^n)$ and under assumption of H\"older continuity of the flux vector $\varphi(u)$ at zero with the exponent $\alpha\ge (n-1)/n$.

The main our result is the existence of the largest and the smallest e.s. of (\ref{1}), (\ref{ini}) in the general case $u_0\in L^\infty(\R^n)$. The uniqueness of e.s. follows from this result in the particular case when initial function $u_0$ is periodic. This extends results of \cite{Pan02}. In the case $n=1$ we also prove the weak completeness of the set of spatially periodic e.s., generalizing results of \cite{PanSIMA} to the case of discontinuous flux.

\section{Some properties of e.s.}

We denote $z^\pm=\max(\pm z,0)$, $\sign^+ z=(\sign z)^+$, $\sign^- z=-\sign^+ (-z)$ (so that $\sign^\pm z=\frac{d}{dz}z^\pm$).

\begin{proposition}\label{pro1}
If $u=u(t,x)$ is an e.s. of (\ref{1}), (\ref{ini}), $c\in\R$, then for a.e. $t>0$
$$
\int_{\R^n}(u(t,x)-c)^\pm dx\le\int_{\R^n}(u_0(x)-c)^\pm dx.
$$
\end{proposition}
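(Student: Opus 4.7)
The plan is to extract a one-sided Kruzhkov inequality from the measure-valued entropy relation (\ref{entim1}) and then test it against a spatial cutoff. By Remark \ref{rem1} the $-$ version follows from the $+$ version applied to the e.s.\ $-u$ of (\ref{1-}), so it suffices to prove $\int (u(t,x) - c)^+\,dx \le \int (u_0 - c)^+\,dx$ and then invoke that remark for the other case.

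Since $b$ is continuous and coercive (Lemma \ref{lem1}) it is surjective, so I fix $k := \sup\{v \in \R : b(v) = c\}$, for which $b(k) = c$. The push-forward condition $b^*\nu_{t,x} = \delta_{u(t,x)}$ from Definition \ref{def1} forces $\supp\nu_{t,x} \subset b^{-1}(\{u(t,x)\})$ a.e.; with my choice of $k$ this level set lies in $\{v \le k\}$ when $u(t,x) \le c$ and in $\{v > k\}$ when $u(t,x) > c$. Writing $\bar g(t,x) := \int g(v)\,d\nu_{t,x}(v)$, I therefore obtain the pointwise identity
$$\int \sign^+(v-k)(g(v) - g(k))\,d\nu_{t,x}(v) = \mathbf{1}_{\{u>c\}}(\bar g(t,x) - g(k)).$$
Applying (\ref{entim1}) with $k$ replaced by $\pm K$ and sending $K \to +\infty$ (using coercivity of $b$) makes the absolute values collapse, and the two resulting opposite inequalities combine to the distributional weak equation $u_t + \div_x \bar g = 0$ with initial datum $u_0$. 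Averaging the $f$-tested form of this identity with (\ref{entim1}) at my chosen $k$ — the constants $c$ and $g(k)$ integrate to zero because $f$ has compact support in $\bar\Pi$ — yields the one-sided entropy inequality
$$\int_\Pi \bigl[(u-c)^+ f_t + \mathbf{1}_{\{u>c\}}(\bar g - g(k)) \cdot \nabla_x f\bigr]\,dt\,dx + \int_{\R^n} (u_0-c)^+ f(0,x)\,dx \ge 0$$
valid for every nonnegative $f \in C_0^1(\bar\Pi)$.

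Finally I test with $f(t,x) = \alpha_\delta(t)\,\xi_R(x)$, where $\alpha_\delta$ smoothly approximates $\mathbf{1}_{[0,t_0]}$ and $\xi_R$ is a smooth spatial cutoff. Letting $\delta \to 0$ at a Lebesgue point $t_0$ gives, for a.e.\ $t_0 > 0$,
$$\int (u(t_0,x) - c)^+\,\xi_R(x)\,dx \le \int (u_0 - c)^+\,\xi_R\,dx + \int_0^{t_0}\!\!\int \mathbf{1}_{\{u>c\}}(\bar g - g(k))\cdot \nabla_x \xi_R\,dx\,dt.$$
If $(u_0 - c)^+ \notin L^1(\R^n)$ the conclusion is vacuous; otherwise, sending $R \to \infty$, Fatou controls the left-hand side and monotone convergence handles the first term on the right. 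The main — and essentially only — obstacle is forcing the correction integral on the right to vanish in the limit. Since $g$ is merely continuous, the naive Lipschitz bound $|\bar g - g(k)| \le L\,(u-c)^+$ is unavailable and a plain cone with fixed speed does not apply verbatim. To close the argument I would instead use a cone-type cutoff $\xi(R - L t - |x|)$ with speed $L$ tailored to the modulus of continuity of $g$ on the bounded support of $\nu$, and exploit the key structural fact that the entropy flux $\mathbf{1}_{\{u>c\}}(\bar g - g(k))$ vanishes identically on $\{u \le c\}$; this localization on $\{u>c\}$ is what makes the lateral contribution enter with the favorable sign needed to discard it.
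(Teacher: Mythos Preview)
Your derivation of the one-sided entropy inequality is correct and matches relation (\ref{entim1+}) in the paper: the choice $k=\sup\{v:b(v)=c\}$ and the push-forward condition indeed yield the flux $\mathbf{1}_{\{u>c\}}(\bar g-g(k))$. The problem is the final paragraph, which is where all the content lies. Your proposed mechanism---a cone with speed ``tailored to the modulus of continuity of $g$'' together with the vanishing of the flux on $\{u\le c\}$---does not close the argument. On $\{u>c\}$ the lateral term has the favorable sign only if $|\bar g-g(k)|\le L\,(u-c)$ for the cone speed $L$; but for merely continuous $g$ there is no such linear bound (think $g(v)-g(k)\sim (b(v)-c)^{1/2}$), and the fact that the flux vanishes on $\{u\le c\}$ contributes nothing on the set where the difficulty actually lives. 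No finite cone speed makes that term nonpositive, and you cannot appeal to finite measure of $\{u>c\}$ since that is essentially what you are trying to prove.

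The paper's proof resolves this by a genuinely different idea: instead of working with the single entropy $(u-c)^+$, it integrates (\ref{entim1+}) over $k$ against the weight $\beta'(b(k)-c)\,db(k)$ with $\beta(s)=\min((s/\delta)^+,1)^m$, $m\ge n$. This produces a regularized entropy $\eta(u-c)$ and a flux $q$ satisfying $|q|\le N(\varepsilon)(\eta(u-c)+\varepsilon)$ with $N(\varepsilon)\sim\varepsilon^{-1/(m+1)}$. One then runs the cone argument with speed $N(\varepsilon)$; the lateral term now \emph{does} have the right sign, at the cost of an error $\varepsilon\cdot\mathrm{vol}(\text{cone})\sim\varepsilon(N(\varepsilon)t_0+R)^n$, which vanishes as $\varepsilon\to0$ precisely because $m+1>n$. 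Letting $\delta\to0$ afterwards recovers $(u-c)^+$. This regularization-and-blowup balancing is the missing ingredient in your attempt.
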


\begin{proof}
Without loss of generality we will suppose that $\int_{\R^n}(u_0(x)-c)^\pm dx<\infty$, otherwise the required estimate is evident.
It follows from (\ref{entim1}) with $k=\pm M$, $M\ge\|\nu_{t,x}\|_\infty$, that for each $f=f(t,x)\in C_0^1(\bar\Pi)$
\begin{equation}\label{weak}
\int_\Pi \left[uf_t+\int g(v)d\nu_{t,x}(v)\cdot\nabla_xf\right]dtdx+
\int_{\R^n}u_0(x)f(0,x)dx=0.
\end{equation}
Taking into account that for every constant $k\in\R$
$$
\int_\Pi [b(k)f_t+g(k)\cdot\nabla_xf]dtdx+\int_{\R^n}b(k)f(0,x)dx=0,
$$
we can rewrite the previous identity in the form
$$
\int_\Pi \left[(u-b(k))f_t+\int (g(v)-g(k))d\nu_{t,x}(v)\cdot\nabla_xf\right]dtdx+\nonumber\\
\int_{\R^n}(u_0(x)-b(k))f(0,x)dx=0.
$$
Putting this equality together with entropy inequality (\ref{entim1}) and taking into account that
$|z|+z=2z^+$, $\sign z+ 1 =2\sign^+ z$, we arrive at the relation
\begin{align}\label{entim1+}
\int_\Pi \left[(u-b(k))^+f_t+\int\sign^+(v-k)(g(v)-g(k))d\nu_{t,x}(v)\cdot\nabla_xf\right]dtdx+ \nonumber\\
\int_{\R^n}(u_0(x)-b(k))^+f(0,x)dx\ge 0.
\end{align}
By coercivity condition there is such $d\in\R$ that $c=b(d)$.
Let $m\ge n$, $\delta>0$, $\beta(s)=\min((s/\delta)^+,1)^m$.
Integrating the  inequality (\ref{entim1+})
over the measure $\beta'(b(k)-c)db(k)$, we arrive at the relation
\begin{align}\label{5}
\int_\Pi \left[\eta(u-c)f_t+\int q(v)d\nu_{t,x}(v)\cdot\nabla_xf\right]dtdx+
\int_{\R^n}\eta(u_0(x)-c)f(0,x)dx\ge 0,
\end{align}
where
\begin{align*}
\eta(b(v)-c)=\int_d^v (b(v)-b(k))^+\beta'(b(k)-c)db(k)=\int_d^v\beta(b(k)-c)db(k)=\\
\left\{ \begin{array}{lcr} ((b(v)-c)^+)^{m+1}/((m+1)\delta^m) & , & b(v)-c<\delta, \\
b(v)-c-m\delta/(m+1) & , & b(v)-c\ge\delta, \end{array}\right.  \\
q(v)=\int_d^v\sign^+(v-k)(g(v)-g(k))\beta'(b(k)-c)d b(k).
\end{align*}
In particular, if $\supp\nu_{t,x}\subset [-M,M]$ a.e. on $\Pi$, and $\displaystyle C=2\max_{|v|\le M+d}|g(v)|$ then for all $v\in [-M,M]$
$$|q(v)|\le C\int_d^v\beta'(b(k)-c)db(k)=C\beta(b(v)-c),$$ which implies that
\begin{equation}\label{6}
\left|\int q(v) d\nu_{t,x}(v)\right|\le C\int\beta(b(v)-c)d\nu_{t,x}(v)=C\beta(u-c).
\end{equation}
Now we fix $\varepsilon>0$. Since $\beta(s)=1$ for $s>\delta$, the function $\displaystyle\gamma(s)\doteq \frac{\beta(s)}{\eta(s)+\varepsilon}$ decreases on $[\delta,+\infty)$. This implies that
$$
\max\gamma(s)=\max_{s\in[0,\delta]} \gamma(s)\le\max_{s>0}\frac{(s/\delta)^m}{\delta(s/\delta)^{m+1}/(m+1)+\varepsilon}=\max_{\sigma=s/\delta>0}\frac{m+1}{\delta \sigma+(m+1)\varepsilon \sigma^{-m}}.
$$
By direct computations we find
$$
\min_{\sigma>0}(\delta\sigma+(m+1)\varepsilon \sigma^{-m})=\frac{\delta(m+1)}{m}\left(\frac{m(m+1)\varepsilon}{\delta}\right)^{\frac{1}{m+1}}.
$$
Therefore,
$$
\gamma(s)\le\frac{m}{\delta}\left(\frac{\delta}{m(m+1)}\right)^{\frac{1}{m+1}}\varepsilon^{-\frac{1}{m+1}}.
$$
This together with estimate (\ref{6}) implies that
\begin{equation}\label{6a}
\left|\int q(v)d\nu_{t,x}(v)\right|\le N(\eta(u-c)+\varepsilon),
\end{equation}
where
\begin{equation}\label{Ne}
N=N(\varepsilon)=\frac{Cm}{\delta}\left(\frac{\delta}{m(m+1)}\right)^{\frac{1}{m+1}}\varepsilon^{-\frac{1}{m+1}}.
\end{equation}
Since  $\int_\Pi f_tdtdx+\int_{\R^n} f(0,x)dx=0$ we can write (\ref{5}) in the form
\begin{align}\label{7}
\int_\Pi \left[(\eta(u-c)+\varepsilon) f_t+\int q(v)d\nu_{t,x}(v)\cdot\nabla_xf\right]dtdx+\nonumber\\
\int_{\R^n}(\eta(u_0(x)-c)+\varepsilon) f(0,x)dx\ge 0.
\end{align}
Let $E$ be a set of $t>0$ such that $(t,x)$ is a Lebesgue point of $u(t,x)$ for almost all $x\in\R^n$. It is rather well-known (see for example \cite[Lemma~1.2]{PaJHDE}) that $E$ is a set of full measure and $t\in E$ is a common Lebesgue point of the functions $t\to\int_{\R^n} u(t,x)h(x)dx$ for all $h(x)\in L^1(\R^n)$. Since every Lebesgue point of a bounded function $u$ is also a Lebesgue point of $p(u)$ for an arbitrary function $p\in C(\R)$, we may replace $u$ in the above property by $p(u)$, and in particular by $\eta(u-c)+\varepsilon$. We choose a function $\omega(s)\in C_0^\infty(\R)$, such that $\omega(s)\ge 0$, $\supp\omega\subset [0,1]$, $\int\omega(s)ds=1$, and define the sequences $\omega_r(s)=r\omega(rs)$, $\theta_r(s)=\int_{-\infty}^s\omega_r(\sigma)d\sigma=\int_{-\infty}^{rs}\omega(\sigma)d\sigma$, $r\in\N$. Obviously, the sequence $\omega_r(s)$ converges as $r\to\infty$ to the Dirac $\delta$-measure weakly in $\D'(\R)$ while the sequence $\theta_r(s)$ converges to the Heaviside function $\theta(s)$ pointwise and in $L^1_{loc}(\R)$.
Now we take the test function in the form
$$
f=f(t,x)=h\theta_r(t_0-t), \quad h=\rho(N(t-t_0)+|x|-R),
$$
where $\rho(\sigma)\in C^\infty(\R)$ is a decreasing function such that $\rho(\sigma)=1$ for $\sigma\le 0$ and $\rho(\sigma)=0$ for $\sigma\ge 1$
(we can take $\rho(\sigma)=1-\theta_1(\sigma)$), $R>0$, and $t_0\in E$. Observe that $f=\theta_r(t_0-t)$ in a vicinity $|x|<R$ of the singular point $x=0$ and therefore $f\in C^\infty(\bar\Pi)$, $f\ge 0$.
Applying (\ref{7}) to the test function $f$, we arrive at the relation
\begin{align}\label{8}
\int_{\R^n}(\eta(u_0(x)-c)+\varepsilon) h(0,x)dx-\int_\Pi (\eta(u-c)+\varepsilon) h\omega_r(t_0-t)dtdx+\nonumber\\
\int_\Pi \left[N\eta(u_0(x)-c)+\int q(v)d\nu_{t,x}(v)\cdot\frac{x}{|x|}\right]\rho'(N(t-t_0)+|x|-R)\theta_r(t_0-t)dtdx\ge 0
\end{align}
for sufficient large $r\in\N$ such that $rt_0>1$.
In view of (\ref{6a}) and the condition $\rho'(\sigma)\le 0$, the last integral in (\ref{8}) is non-positive and it follows that
$$
\int_\Pi (\eta(u-c)+\varepsilon) h\omega_r(t_0-t)dtdx\le \int_{\R^n}(\eta(u_0(x)-c)+\varepsilon) h(0,x)dx.
$$
Dropping $\varepsilon$ in the left integral, we obtain the inequality
$$
\int_0^\infty \left(\int_{\R^n}\eta(u(t,x)-c)h(t,x)dx\right)\omega_r(t_0-t)dt\le
\int_{\R^n}(\eta(u_0(x)-c)+\varepsilon) h(0,x)dx.
$$
Since $t_0\in E$ is a Lebesgue point of the function $t\to\int_{\R^n}\eta(u(t,x)-c)h(t,x)dx$, we can pass to the limit as
$r\to\infty$ in the above inequality, resulting in
$$
\int_{\R^n}\eta(u(t_0,x)-c)h(t_0,x)dx\le \int_{\R^n}(\eta(u_0(x)-c)+\varepsilon) h(0,x)dx.
$$
Revealing this relation, we get
\begin{align}\label{9}
\int_{\R^n}\eta(u(t_0,x)-c)\rho(|x|-R)dx\le \int_{\R^n}(\eta(u_0(x)-c)+\varepsilon)\rho(|x|-Nt_0-R)dx\le \nonumber \\ \int_{\R^n}\eta(u_0(x)-c)dx+\varepsilon \int_{\R^n}\rho(|x|-Nt_0-R)dx.
\end{align}
With the help of (\ref{Ne}), we obtain that for some constants $c_1$, $c_2=c_2(R,\delta)$
$$
\varepsilon\int_{\R^n}\rho(|x|-N(\varepsilon)t_0-R)dx\le c_1\varepsilon(N(\varepsilon)t_0+R+1)^n\le c_2\varepsilon(1+t_0\varepsilon^{-\frac{1}{m+1}})^n\mathop{\to}_{\varepsilon\to 0+}0
$$
(recall that $m+1>n$). Therefore, passing to the limit in (\ref{9}) as $\varepsilon\to 0+$, we obtain that for all $t_0\in E$
\begin{equation}\label{10}
\int_{\R^n}\eta(u(t_0,x)-c)\rho(|x|-R)dx\le \int_{\R^n}\eta(u_0(x)-c)dx.
\end{equation}
Now observe that $0\le\eta(s)\le s^+$ and $\eta(s)\to s^+$ as $\delta\to 0$. By Lebesgue dominated convergence theorem it follows from (\ref{10}) in the limit as $\delta\to 0$ that for a.e. $t=t_0>0$
$$
\int_{\R^n}(u(t,x)-c)^+\rho(|x|-R)dx\le \int_{\R^n}(u_0(x)-c)^+dx<+\infty.
$$
By Fatou lemma this implies in the limit as $R\to\infty$ that
\begin{equation}\label{11}
\int_{\R^n}(u(t,x)-c)^+dx\le \int_{\R^n}(u_0(x)-c)^+dx,
\end{equation}
as required. In view of Remark~\ref{rem1} the function $-u(t,x)$ is an e.s. of the problem
$u_t-\div_x\varphi(-u)_x-0$, $u(0,x)=-u_0(x)$. Applying (\ref{11}) to this e.s. with $c$ replaced by $-c$, we obtain the inequality
\begin{equation}\label{11a}
\int_{\R^n}(u(t,x)-c)^-dx\le \int_{\R^n}(u_0(x)-c)^-dx  \quad \forall t\in E.
\end{equation}
\end{proof}

\begin{corollary}\label{cor1} Any e.s. $u=u(t,x)$ of (\ref{1}), (\ref{ini}) satisfies the \textbf{maximum/minimum principle}
$$
a=\essinf u_0(x)\le u(t,x)\le b=\esssup u_0(x)  \ \mbox{ for a.e. } (t,x)\in\Pi.
$$
\end{corollary}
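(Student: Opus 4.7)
The plan is to obtain the corollary as an immediate consequence of Proposition~\ref{pro1} by specializing the constant $c$ to the two extreme values $a$ and $b$. Since $u_0\in L^\infty(\R^n)$, both $a=\essinf u_0(x)$ and $b=\esssup u_0(x)$ are finite real numbers, so this specialization is legitimate.

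First I would apply Proposition~\ref{pro1} with the ``$+$'' inequality and $c=b$. By the definition of essential supremum, $u_0(x)\le b$ for a.e.\ $x$, so $(u_0(x)-b)^+=0$ a.e.\ and the right-hand side of the estimate in the proposition vanishes. The non-negativity of the integrand then forces
$$\int_{\R^n}(u(t,x)-b)^+\,dx=0\quad\text{for a.e.\ }t>0,$$
so $(u(t,x)-b)^+=0$, i.e., $u(t,x)\le b$, for a.e.\ $(t,x)\in\Pi$.

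Symmetrically, applying Proposition~\ref{pro1} with the ``$-$'' inequality and $c=a$ yields $(u_0(x)-a)^-=0$ a.e., hence $\int_{\R^n}(u(t,x)-a)^-\,dx=0$ for a.e.\ $t>0$, so $u(t,x)\ge a$ a.e.\ on $\Pi$. (Alternatively, the lower bound could be obtained from the upper bound via Remark~\ref{rem1} applied to $-u(t,x)$, which is an e.s.\ of the reflected problem with initial datum $-u_0$.) Combining the two bounds gives the maximum/minimum principle.

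There is no substantive obstacle: the entire analytic work has already been carried out in Proposition~\ref{pro1}. The only minor point to observe is that Proposition~\ref{pro1} delivers the integral inequality for a.e.\ $t>0$, but since the conclusion of the corollary is itself an a.e.\ statement on $\Pi$, this is harmless — the exceptional null set in $t$ combines, via Fubini, with the null set in $x$ inside $\Pi$ without enlarging the conclusion.
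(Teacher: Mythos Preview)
Your proposal is correct and matches the paper's own proof essentially verbatim: the paper also derives the corollary by applying the two inequalities of Proposition~\ref{pro1} (equations (\ref{11}) and (\ref{11a})) with $c=b$ and $c=a$, respectively.
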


\begin{proof}
The maximum/minimum principles directly follows from (\ref{11}) and (\ref{11a}) with $k=b$ and $k=a$, respectively.
\end{proof}
Putting inequalities (\ref{11}), (\ref{11a}) together and using the known relation $|z|=z^++z^-$, we obtain the following

\begin{corollary}\label{cor2} If $u(t,x)$ is an e.s. of (\ref{1}), (\ref{ini}) then for a.e. $t>0$
$$
\int_{\R^n}|u(t,x)-c|dx\le \int_{\R^n}|u_0(x)-c|dx.
$$
\end{corollary}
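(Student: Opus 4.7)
The plan is a direct combination of the two one-sided bounds already contained in Proposition~\ref{pro1}. The proof of that proposition produces, for a.e.\ $t>0$, the estimate
$$
\int_{\R^n}(u(t,x)-c)^+\,dx\le\int_{\R^n}(u_0(x)-c)^+\,dx,
$$
and, by applying the same estimate to the e.s.\ $-u(t,x)$ of problem (\ref{1-}) (see Remark~\ref{rem1}) with the constant $c$ replaced by $-c$, also
$$
\int_{\R^n}(u(t,x)-c)^-\,dx\le\int_{\R^n}(u_0(x)-c)^-\,dx.
$$
Both hold on a common set $E\subset\R_+$ of full measure.

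The main step is simply to add the two inequalities and use the pointwise identity $|z|=z^++z^-$ for real $z$, applied with $z=u(t,x)-c$ on the left and $z=u_0(x)-c$ on the right. This immediately yields
$$
\int_{\R^n}|u(t,x)-c|\,dx\le\int_{\R^n}|u_0(x)-c|\,dx
$$
for every $t\in E$, which is exactly the claim.

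There is essentially no obstacle. If the right-hand side is $+\infty$ the inequality is trivial, and otherwise both one-sided bounds are already finite by Proposition~\ref{pro1}. No additional test-function construction, Lebesgue-point selection, or limit passage is required here, since the technical work (passing to $\varepsilon\to 0+$, $\delta\to 0$, $R\to\infty$, and restricting to Lebesgue points of $t\mapsto u(t,\cdot)$) has already been carried out in the proof of Proposition~\ref{pro1}. Thus the corollary is obtained by a single line of addition from the preceding result.
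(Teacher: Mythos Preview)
The proposal is correct and follows exactly the paper's own argument: adding the two one-sided inequalities (\ref{11}) and (\ref{11a}) from Proposition~\ref{pro1} and using the identity $|z|=z^++z^-$.
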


If $u_1$, $u_2$ is a pair of e.s. and $\nu_{t,x}^{(1)}$, $\nu_{t,x}^{(2)}$ are the corresponding measure valued e.s. of (\ref{1'}) then by a measure-valued analogue of the doubling variable method, developed in \cite{Pan96}, we have the relation
\begin{align*}
\frac{\partial}{\partial t}\iint (b(v)-b(w))^+d\nu_{t,x}^{(1)}(v)d\nu_{t,x}^{(2)}(w)+ \\ \div_x\iint\sign^+(v-w)(g(v)-g(w))d\nu_{t,x}^{(1)}(v)d\nu_{t,x}^{(2)}(w)\le 0 \mbox{ in } \D'(\Pi).
\end{align*}
Since $b(v)=u_1(t,x)$, $b(w)=u_2(t,x)$ on $\supp\nu_{t,x}^{(1)}$, $\supp\nu_{t,x}^{(1)}$, respectively, then the above relation can be written as
\begin{equation}\label{12}
\frac{\partial}{\partial t}(u_1-u_2)^++\div_x\iint\sign^+(v-w)(g(v)-g(w))d\nu_{t,x}^{(1)}(v)d\nu_{t,x}^{(2)}(w)\le 0 \mbox{ in } \D'(\Pi).
\end{equation}

\begin{proposition}\label{pro2}
Let $u_1$, $u_2$ be e.s. of (\ref{1}), (\ref{ini}) with initial functions $u_{10}$, $u_{20}$, respectively. Assume that
for every $T>0$
$$
\meas\{ \ (t,x)\in (0,T)\times\R^n \ | \ u_1(t,x)\ge u_2(t,x) \ \}<+\infty.
$$
Then for a.e. $t>0$
$$
\int_{\R^n} (u_1(t,x)-u_2(t,x))^+dx\le\int_{\R^n} (u_{10}(x)-u_{20}(x))^+dx.
$$
In particular, $u_1(t,x)\le u_2(t,x)$ a.e. in $\Pi$ whenever $u_{10}(x)\le u_{20}(x)$ a.e. in $\R^n$ (the comparison principle).
\end{proposition}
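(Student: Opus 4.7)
The plan is to upgrade the distributional Kato inequality (\ref{12}) to an integrated form that includes the initial data, then exploit the monotonicity of $b$ to localize the flux, and finally conclude by a cutoff procedure analogous to the one in Proposition~\ref{pro1}.

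The first step is to pass from (\ref{12}) to the integrated inequality
\begin{equation*}
\int_\Pi \left[(u_1-u_2)^+ f_t + F(t,x)\cdot\nabla_x f\right] dtdx + \int_{\R^n}(u_{10}(x)-u_{20}(x))^+ f(0,x)\, dx \ge 0
\end{equation*}
for every $f\in C_0^1(\bar\Pi)$, $f\ge 0$, where
$$F(t,x)=\iint\sign^+(v-w)(g(v)-g(w))\,d\nu^{(1)}_{t,x}(v)\,d\nu^{(2)}_{t,x}(w).$$
This is obtained by the measure-valued doubling of variables of \cite{Pan96}, carrying the initial data through exactly as (\ref{enti1}) is obtained from (\ref{ent}) and (\ref{ini1}). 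This technical upgrade is the principal obstacle; all subsequent steps are largely routine.

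The key structural observation is that $F$ is supported in $\{u_1\ge u_2\}$. The requirement $b^*\nu^{(i)}_{t,x}=\delta(u-u_i(t,x))$ forces $b(v)=u_1(t,x)$ for $\nu^{(1)}_{t,x}$-a.e.\ $v$ and $b(w)=u_2(t,x)$ for $\nu^{(2)}_{t,x}$-a.e.\ $w$; since $b$ is non-strictly increasing, the condition $u_1(t,x)<u_2(t,x)$ implies $v<w$ on $\supp(\nu^{(1)}_{t,x}\otimes\nu^{(2)}_{t,x})$, so that $\sign^+(v-w)\equiv 0$ there. Combined with the boundedness of $g$ on the supports of the measures, this yields
$$|F(t,x)|\le C\,\chi_{\{u_1\ge u_2\}}(t,x)\qquad\text{for a.e.\ }(t,x)\in\Pi,$$
where $C$ depends only on $\|\nu^{(1)}_{t,x}\|_\infty$, $\|\nu^{(2)}_{t,x}\|_\infty$, and $g$.

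To finish, I would choose a spatial cutoff $\rho_R\in C_0^\infty(\R^n)$ with $0\le\rho_R\le 1$, $\rho_R\equiv 1$ on $\{|x|\le R\}$, $\supp\rho_R\subset\{|x|\le R+1\}$, $\|\nabla\rho_R\|_\infty\le 2$, and insert $f(t,x)=\theta_r(t_0-t)\rho_R(x)$ (with $\theta_r$ as in the proof of Proposition~\ref{pro1}) into the integrated inequality. Passing $r\to\infty$ at a point $t_0>0$ that is simultaneously a Lebesgue point of every function $t\mapsto\int_{\R^n}(u_1(t,x)-u_2(t,x))^+\rho_R(x)\,dx$, $R\in\N$, produces
$$\int_{\R^n}(u_1(t_0,x)-u_2(t_0,x))^+\rho_R(x)\,dx\le\int_{\R^n}(u_{10}-u_{20})^+\rho_R\,dx+\int_0^{t_0}\!\!\int_{\R^n}F\cdot\nabla\rho_R\,dxdt.$$
By the support bound on $F$, the last term is at most
$$2C\,\meas\bigl(\{(t,x)\in(0,t_0)\times\R^n:u_1(t,x)\ge u_2(t,x)\}\cap\{(t,x):R\le|x|\le R+1\}\bigr),$$
which tends to zero as $R\to\infty$ by the finite-measure hypothesis. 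Monotone convergence on the remaining two terms then yields the claimed $L^1$ inequality for a.e.\ $t_0>0$. Specializing to $u_{10}\le u_{20}$ a.e.\ gives the comparison principle.
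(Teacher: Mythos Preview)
Your proposal is correct and the core idea---that $F$ vanishes on $\{u_1<u_2\}$ by the monotonicity of $b$, so the finite-measure hypothesis kills the flux term as the spatial cutoff widens---is exactly the paper's. There is, however, a worthwhile technical difference in how the initial data enters. The paper does \emph{not} upgrade (\ref{12}) to an inequality with initial data built in; instead it applies (\ref{12}) to a test function $\chi_r(t)q(x/R)$ with $\chi_r(t)=\theta_r(t-t_0)-\theta_r(t-t_1)$, passes $r\to\infty$ to obtain an inequality between the values at two positive times $t_0<t_1\in E$, and only then lets $t_0\to 0$ using the strong initial trace (\ref{ini1}) together with the elementary bound $|(u_1-u_2)^+-(u_{10}-u_{20})^+|\le|u_1-u_{10}|+|u_2-u_{20}|$. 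This completely sidesteps what you flag as the ``principal obstacle'': no re-running of the measure-valued doubling with initial data is needed. A second minor difference is the spatial cutoff: the paper uses the scaled $q(x/R)$, so the flux term carries an explicit factor $1/R$ multiplying a uniformly bounded integral ($\le\|G\|_\infty\|\nabla_y q\|_\infty\meas D$), whereas your fixed-width annulus argument relies on the tail of a finite measure vanishing. Both work; the $1/R$ version is a touch cleaner.
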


\begin{proof}
Let, as above, $\nu_{t,x}^{(1)}$, $\nu_{t,x}^{(2)}$ be measure valued e.s. of (\ref{1'}) corresponding to $u_1$, $u_2$.
Let $E\subset\R_+$ be a set of full measure similar to one in the proof of Proposition~\ref{pro1} consisting of
values $t>0$ such that $(t,x)$ is a Lebesgue point of $(u_1(t,x)-u_2(t,x))^+$ for a.e. $x\in\R^n$. Then $t\in E$ is a common Lebesgue point of the functions $t\to\int (u_1(t,x)-u_2(t,x))^+ h(x)dx$, $h(x)\in L^1(\R^n)$.
Let $t_0,t_1\in E$, $t_0<t_1$, $\chi_r(t)=\theta_r(t-t_0)-\theta_r(t-t_1)$, where the sequence $\theta_r(t)$, $r\in\N$, was defined in the proof of Proposition~\ref{pro1}.
Applying (\ref{12}) to the nonnegative test function $f(t,x)=\chi_r(t)q(x/R)$, where $q=q(y)\in C_0^1(\R^n)$, $0\le q\le 1$, $q(0)=1$, and $R>0$, we get
\begin{align*}
\int_\Pi (u_1(t,x)-u_2(t,x))^+(\omega_r(t-t_0)-\omega_r(t-t_1))q(x/R)dtdx+ \\
\frac{1}{R}\int_{\Pi} \iint\sign^+(v-w)(g(v)-g(w))d\nu_{t,x}^{(1)}(v)d\nu_{t,x}^{(2)}(w)\cdot\nabla_yq(x/R)\chi_r(t)dtdx\ge 0.
\end{align*}
Since $t_i$, $i=1,2$, are Lebesgue points of the functions $\int_{\R^n} (u_1(t,x)-u_2(t,x))^+q(x/R)dx$ while the sequence
$\chi_r(t)$ is uniformly bounded and converges pointwise to the indicator function of the interval $(t_0,t_1]$, we can pass to the limit as $r\to\infty$ in the above relation and get
\begin{align}\label{13}
\int_{\R^n} (u_1(t_1,x)-u_2(t_1,x))^+q(x/R)dx\le \int_{\R^n} (u_1(t_0,x)-u_2(t_0,x))^+q(x/R)dx+ \nonumber\\
\frac{1}{R}\int_{(t_0,t_1)\times\R^n} \iint\sign^+(v-w)(g(v)-g(w))d\nu_{t,x}^{(1)}(v)d\nu_{t,x}^{(2)}(w)\cdot\nabla_y q(x/R)dtdx.
\end{align}
It follows from the inequality
$$
|(u_1(t_0,x)-u_2(t_0,x))^+-(u_{10}(x)-u_{20}(x))^+|\le |u_1(t_0,x)-u_{10}(x)|+|u_2(t_0,x)-u_{20}(x)|
$$
and initial relation (\ref{ini1}) that
$$
\esslim_{t_0\to 0}(u_1(t_0,x)-u_2(t_0,x))^+=(u_{10}(x)-u_{20}(x))^+ \ \mbox{ in } L^1_{loc}(\R^n).
$$
This allows to pass to the limit as $t_0\to 0$ in (\ref{13}), resulting in the relation: for a.e. $T=t_1>0$
\begin{align}\label{14}
\int_{\R^n} (u_1(T,x)-u_2(T,x))^+q(x/R)dx\le \int_{\R^n} (u_{10}(x)-u_{20}(x))^+q(x/R)dx+ \nonumber\\
\frac{1}{R}\int_{(0,T)\times\R^n} \iint\sign^+(v-w)(g(v)-g(w))d\nu_{t,x}^{(1)}(v)d\nu_{t,x}^{(2)}(w)\cdot\nabla_y q(x/R)dtdx\le \nonumber\\ \int_{\R^n} (u_{10}(x)-u_{20}(x))^+dx+\frac{1}{R}\int_{(0,T)\times\R^n}G(t,x)\cdot\nabla_y q(x/R)dtdx,
\end{align}
where
$$
G=G(t,x)\doteq\iint\sign^+(v-w)(g(v)-g(w))d\nu_{t,x}^{(1)}(v)d\nu_{t,x}^{(2)}(w).
$$
By Definition~\ref{def1}
$b(v)\equiv u_1(t,x)$ on $\supp\nu_{t,x}^{(1)}$, $b(w)\equiv u_2(t,x)$ on $\supp\nu_{t,x}^{(2)}$ and if $u_1(t,x)<u_2(t,x)$ then $v<w$ whenever $v\in \supp\nu_{t,x}^{(1)}$, $w\in\supp\nu_{t,x}^{(2)}$ and therefore the vector-function
$G$ can be different from zero vector only on the set $\{u_1(t,x)\ge u_2(t,x)\}$, which has finite measure in any layer $\Pi_T=(0,T)\times\R^n$. Thus,
denoting $D=\{ \ (t,x)\in\Pi_T \ | \ u_1(t,x)\ge u_2(t,x) \ \}$, we find
\begin{align*}
\left|\int_{(0,T)\times\R^n} G(t,x)\cdot\nabla_y q(x/R)dtdx\right|=\\ \left|\int_D G(t,x)\cdot\nabla_y q(x/R)dtdx\right|\le \|G\|_\infty\|\nabla_y q\|_\infty\meas D<\infty
\end{align*}
(notice that $\displaystyle\|G\|_\infty\le 2\max_{|v|\le M} |g(v)|$, where $M=\max(\|\nu_{t,x}^{(1)}\|_\infty,\|\nu_{t,x}^{(2)}\|_\infty)$).
We see that the last term in (\ref{14}) disappears in the limit as $R\to\infty$ due to the factor $1/R$. Hence, passing to the limit as $R\to\infty$ and using Fatou's lemma (observe that $q(x/R)\mathop{\to}\limits_{R\to\infty} q(0)=1$), we arrive at the desired relation: for all $T\in E$
$$
\int_{\R^n} (u_1(T,x)-u_2(T,x))^+dx\le \int_{\R^n} (u_{10}(x)-u_{20}(x))^+dx.
$$
\end{proof}

The following result asserts the strong completeness of the set of e.s. of the problem (\ref{1}), (\ref{ini}). More precisely, we consider the approximate problem
\begin{equation}\label{1'r}
u_t+\div_x g(v)=0, \ u=b_r(v); \quad  u(0,x)=u_{r0}(x),
\end{equation}
where the sequence $b_r(u)\in C(\R)$, $r\in\N$, of non-strictly increasing functions converges as $r\to\infty$ to $b(u)$ uniformly on any segment.

\begin{proposition}\label{pro3}
Let $u_{r0}=u_{r0}(x)$, $r\in\N$, be a bounded sequence in $L^\infty(\R^n)$, and $u_r=u_r(t,x)$ be a sequence of e.s. of (\ref{1'r}). Assume that as $r\to\infty$ the sequences $u_{r0}\to u_0=u_0(x)$, $u_r\to u=u(t,x)$ in $L^1_{loc}(\R^n)$, $L^1_{loc}(\Pi)$, respectively. Then $u$ is an e.s.  of (\ref{1}), (\ref{ini}) with initial data $u_0$.
\end{proposition}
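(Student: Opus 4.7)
The plan is to work at the level of measure-valued solutions and exploit the weak-$*$ compactness of Theorem~\ref{thTa}. By Definition~\ref{def1} applied to the approximate problems, attached to every $u_r$ is a measure-valued function $\nu^r_{t,x}\in\MV(\Pi)$ satisfying the averaged entropy relation (\ref{entim}) (with $b,u_0$ replaced by $b_r,u_{r0}$) together with the push-forward identity $(b_r)^*\nu^r_{t,x}=\delta(u-u_r(t,x))$. The first step is to check that $\nu^r_{t,x}$ is uniformly bounded in $\MV(\Pi)$: since $u_r$ is uniformly bounded in $L^\infty(\Pi)$ and $b_r\to b$ uniformly on compacts with $b$ coercive, the constraint $b_r(v)=u_r(t,x)$ on $\supp\nu^r_{t,x}$ forces all supports to lie in a common interval $[-M',M']$ for $r$ large. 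An application of Theorem~\ref{thTa} then provides, along a subsequence, a limit $\nu_{t,x}\in\MV(\Pi)$ in the sense of (\ref{pr2a}).

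Next I would pass to the limit in the entropy inequality (\ref{entim}) satisfied by $\nu^r_{t,x}$. Fix $k\in\R$ and a test function $f\in C_0^1(\bar\Pi)$, $f\ge 0$. For the first integrand, I would replace $|b_r(v)-b_r(k)|$ with $|b(v)-b(k)|$ at the cost of an error bounded by $\sup_{|v|\le M'}|b_r(v)-b(v)|+|b_r(k)-b(k)|$, which tends to zero; then since $v\mapsto|b(v)-b(k)|$ is continuous, the convergence $\int|b(v)-b(k)|d\nu^r_{t,x}(v)\to\int|b(v)-b(k)|d\nu_{t,x}(v)$ weakly-$*$ in $L^\infty(\Pi)$, combined with $f_t\in L^1(\Pi)$, gives the limit of the first term. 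For the flux term, the crucial observation is that $v\mapsto\sign(v-k)(g(v)-g(k))$ is genuinely continuous on $\R$, because the jump of $\sign$ at $v=k$ is killed by the vanishing of $g(v)-g(k)$ there; the same weak-$*$ reasoning then applies. The initial term converges in view of $u_{r0}\to u_0$ in $L^1_{loc}(\R^n)$ and $b_r(k)\to b(k)$. Passing to the limit yields (\ref{entim}) for $\nu_{t,x}$ with data $b,g,u_0$.

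To conclude via Definition~\ref{def1} it remains to verify $b^*\nu_{t,x}=\delta(u-u(t,x))$. For any $p\in C(\R)$, the identity $\int p(b_r(v))d\nu^r_{t,x}(v)=p(u_r(t,x))$ holds a.e. The left-hand side differs from $\int p(b(v))d\nu^r_{t,x}(v)$ by a quantity tending to zero uniformly on $(t,x)$ (using uniform convergence of $p\circ b_r\to p\circ b$ on $[-M',M']$), and the latter converges weakly-$*$ to $\int p(b(v))d\nu_{t,x}(v)$, whereas the right-hand side converges strongly in $L^1_{loc}(\Pi)$ to $p(u(t,x))$ by the assumed convergence $u_r\to u$. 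Equating the two limits yields $\int p(b(v))d\nu_{t,x}(v)=p(u(t,x))$ for a.e.\ $(t,x)$, for every $p$ in a countable dense subset of $C_c(\R)$, which is equivalent to the desired push-forward identity. Together with the entropy inequality already obtained (which, as noted before Definition~\ref{def1}, simultaneously encodes the entropy condition (\ref{ent}) and the initial trace (\ref{ini1})), this proves that $u$ is an e.s.\ of (\ref{1}), (\ref{ini}).

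The main technical point, and the only place where the specific structure of the approximation $b_r\to b$ is used in an essential way, is the uniform compactness of the supports of $\nu^r_{t,x}$; once this is in hand, everything else is a routine weak-$*$ passage to the limit, with the only mild subtlety being the removable discontinuity of the Kruzhkov-type flux integrand at $v=k$.
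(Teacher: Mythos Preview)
Your proposal is correct and follows essentially the same route as the paper: extract the associated measure-valued functions $\nu^r_{t,x}$, use coercivity of $b$ together with $b_r\to b$ to get a common bound $\supp\nu^r_{t,x}\subset[-R,R]$, apply Theorem~\ref{thTa}, pass to the limit in the entropy inequality (using continuity of $v\mapsto\sign(v-k)(g(v)-g(k))$), and verify the push-forward identity $b^*\nu_{t,x}=\delta(u-u(t,x))$ by equating the weak-$*$ limit of $\int p(b_r(v))\,d\nu^r_{t,x}(v)$ with the strong limit of $p(u_r)$. One small point: you assert that $u_r$ is uniformly bounded in $L^\infty(\Pi)$, but the hypothesis only gives this for $u_{r0}$; the paper closes this by invoking the maximum/minimum principle (Corollary~\ref{cor1}), which applies equally to the approximate problems~(\ref{1'r}), to conclude $\|u_r\|_\infty\le M\doteq\sup_r\|u_{r0}\|_\infty$ before running the coercivity argument.
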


\begin{proof}
Let $M=\sup\limits_{r\in\N} \|u_{r0}\|_\infty$. By Corollary~\ref{cor1} we see that $\|u_r\|_\infty\le M$ for all $r\in\N$. By Definition~\ref{def1} there exists a sequence $\nu_{t,x}^r\in\MV(\Pi)$ such that
\begin{equation}\label{unu}
b_r^*\nu_{t,x}^r(u)=\delta(u-u_r(t,x)),
\end{equation}
and that for all $k\in\R$ for every $f=f(t,x)\in C_0^1(\bar\Pi)$, $f\ge 0$
\begin{align}\label{enti2}
\int_\Pi \left[|u_r-b_r(k)|f_t+\int\sign(v-k)(g(v)-g(k))d\nu_{t,x}^r(v)\cdot\nabla_xf\right]dtdx+ \nonumber\\
\int_{\R^n}|u_{r0}(x)-b_r(k)|f(0,x)dx\ge 0.
\end{align}
By the coercivity assumption, there exist such a constant $R>0$ that $b(-R)<-M$, $b(R)>M$. Since $b_r(\pm R)\to b(\pm R)$ as $r\to\infty$, we find that $b_r(-R)<-M$, $b_r(R)>M$ for sufficiently large $r$. Without loss of generality we can suppose
that these inequalities holds for all $r\in\N$.
Then, in view of (\ref{unu}), $\supp\nu_{t,x}^r\subset [-R,R]$. Therefore, the sequence of measure valued functions $\nu_{t,x}^r$ is bounded and by Theorem~\ref{thTa} some subsequence of $\nu_{t,x}^r$ converges weakly to a bounded measure valued function $\nu_{t,x}$ (in the sense of relation (\ref{pr2a})). We replace the original sequences $u_{r0}$, $u_r$, $\nu_{t,x}^r$ by the corresponding subsequences (keeping the notations), and pass to the limit as $r\to\infty$ in (\ref{enti2}). As a result, we get
\begin{align}\label{enti3}
\int_\Pi \left[|u-b(k)|f_t+\int\sign(v-k)(g(v)-g(k))d\nu_{t,x}(v)\cdot\nabla_xf\right]dtdx+ \nonumber\\
\int_{\R^n}|u_{0}(x)-b(k)|f(0,x)dx\ge 0
\end{align}
for all $k\in\R$ and each $f=f(t,x)\in C_0^1(\bar\Pi)$, $f\ge 0$.
Moreover, passing to the limit as $r\to\infty$ in the relation (following from (\ref{unu}))
$$
\int q(b_r(v))d\nu_{t,x}^r(v)= q(u_r(t,x)) \quad \forall q(u)\in C(\R),
$$
with the help of the relation $q(b_r(v))-q(b(v))\rightrightarrows 0$ uniformly on $[-R,R]$, we obtain that for a.e. $(t,x)\in\Pi$
\begin{equation}\label{unu1}
\int q(b(v))d\nu_{t,x}(v)= q(u(t,x)).
\end{equation}
A set of full measure $E$ of points $(t,x)$, for which relation (\ref{unu1}) holds can be chosen common for all $q$ from a countable dense subset of $C(\R)$. By the density, this relation remains valid for all $q\in C(\R)$, which evidently means that $b^*\nu_{t,x}(u)=\delta(u-u(t,x))$ for all $(t,x)\in E$. In particular,  it follows from (\ref{enti3}) that
the entropy relation (\ref{entim}) is fulfilled, and $\nu_{t,x}$ is a measure valued e.s. of (\ref{1'}), (\ref{ini}). In correspondence with Definition~\ref{def1}, we conclude that $u$ is an e.s. of (\ref{1}), (\ref{ini}), as required.
\end{proof}

\section{Existence of e.s.}
In this section we assume that the initial function is integrable, $u_0\in L^1(\R^n)\cap L^\infty(\R^n)$. The general case will be treated in the next section, where we will establish existence of the largest and the smallest e.s.

We introduce the approximations $b_r(u)=b(u)+u/r$, $r\in\N$, of $b(u)$ by strictly increasing functions. Then the equation in (\ref{1'r}) can be written in the standard form
\begin{equation}\label{ap1}
u_t+\div_x\varphi_r(u)=0,
\end{equation}
where $\varphi_r(u)=g((b_r)^{-1}(u))\in C(\R,\R^n)$.
As was established in \cite{ABK}, there exists the unique largest e.s. $u_r=u_r(t,x)$ of the Cauchy problem for equation (\ref{ap1}) with initial data $u_0(x)$. It is known that after possible correction on a set of null measure
$u_r(t,\cdot)\in C([0,+\infty),L^1(\R))$. Moreover, for each fixed $r\in\N$ the maps $u_0\to u_r(t,\cdot)$, $t\ge 0$, are nonexpansive in $L^1(\R^n)$. It is clear that for every $\Delta x\in\R^n$ the shifted functions $u_r(t,x+\Delta x)$ are the largest e.s. of (\ref{ap1}) with the initial function $u_0(x+\Delta x)$. This implies the uniform estimate
$$
\int_{\R^n}|u_r(t_0,x+\Delta x)-u_r(t_0,x)|dx\le\int_{\R^n}|u_0(x+\Delta x)-u_0(x)|dx \quad \forall t_0>0.
$$
It follows from this estimate that
\begin{equation}\label{estx}
\int_{\R^n}|u_r(t_0,x+\Delta x)-u_r(t_0,x)|dx\le\omega^x(|\Delta x|),
\end{equation}
where $\omega^x(h)=\sup\limits_{|\Delta x|<h}\int_{\R^n}|u_0(x+\Delta x)-u_0(x)|dx$ is the continuity modulus of
$u_0$ in $L^1(\R^n)$. We then proceed as in \cite{Kr} to get a similar estimate for shifts of the time variable.
For the sake of completeness we provide the details. We choose an averaging kernel $\beta(y)\in C_0^1(\R^n)$ with the properties: $\beta(y)\ge 0$, $\supp\beta(y)\subset B_1(0)=\{ y\in\R^n | |y|\le 1 \}$, $\int_{\R^n}\beta(y)dy=1$. For a function $q(x)\in L^\infty(\R^n)$ we consider the corresponding averaged functions
$$q^h(x)=h^{-n}\int q(y)\beta((x-y)/h)dy, \quad h>0,$$
which are the convolutions $q*\beta^h(x)$, where $\beta^h(x)=h^{-n}\beta(x/h)$. It is clear that $q^h(x)\in C^1(\R^n)$ for each $h>0$, $\|q^h\|_\infty\le\|q\|_\infty$, and $q^h\to q$ as $h\to 0$ a.e. in $\R^n$. Moreover, since $\nabla q^h=q*\nabla\beta^h(x)$, we have
\begin{equation}\label{conder}
\|\nabla q^h\|_\infty\le \frac{c}{h}\|q\|_\infty, \quad c=\|\nabla_y\beta\|_1.
\end{equation}
Applying (\ref{ap1}) with $u=u_r$ to the test function $f=(\theta_\nu(t-t_0)-\theta_\nu(t-t_0-\Delta t))p(x)$, where
$t_0,\Delta t>0$, $p=p(x)\in C_0^1(\R^n)$, $\nu\in\N$, and passing to the limit as $\nu\to\infty$, we get
\begin{equation}\label{15}
\int_{\R^n} (u_r(t_0+\Delta t)-u_r(t_0,x))p(x)dx=\int_{(t_0,t_0+\Delta t)\times\R^n} \varphi_r(u_r)\cdot\nabla pdx.
\end{equation}
By Corollary~\ref{cor1} $\|u_r\|_\infty\le M=\|u_0\|_\infty$ for every $r\in\N$. It follows from the coercivity assumption that there is such $R>0$ that $b(-R)<-M$, $b(R)>M$. All the more, $b_r(-R)<b(R)<-M$, $b_r(R)>b(R)>M$ for all $r\in\N$.
This implies that $(b_r)^{-1}([-M,M])\subset (-R,R)$ and therefore for a.e. $(t,x)\in\Pi$
$$|\varphi_r(u_r)|=g((b_r)^{-1}(u_r))\le N\doteq\max_{|v|\le R} |g(v)|.$$
It now follows from (\ref{15}) that
\begin{equation}\label{16}
\left|\int_{\R^n} (u_r(t_0+\Delta t)-u_r(t_0,x))p(x)dx\right|\le N\|\nabla p\|_1\Delta t.
\end{equation}

Further, we make use of the following variant of Kruzhkov's lemma \cite[Lemma~1]{Kr} (for the sake of completeness, we provide it with the proof).

\begin{lemma}\label{lem2} Let $w(x)\in L^1(\R^n)$. Then for each $h>0$
$$
\int_{\R^n} ||w(x)|-w(x)(\sign w)^h(x)|dx\le 2\omega_w(h),
$$
where $\omega_w(h)=\sup\limits_{|\Delta x|<h}\int_{\R^n}|w(x+\Delta x)-w(x)|dx$ is the continuity modulus of
$w$ in $L^1(\R^n)$.
\end{lemma}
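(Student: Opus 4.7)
The plan is to exploit the identity $|w(x)| = w(x)\sign w(x)$ and rewrite the integrand as $w(x)[\sign w(x) - (\sign w)^h(x)]$. First I would observe that this quantity is pointwise nonnegative: indeed, $(\sign w)^h(x)$ is an average of values of $\sign w$ and therefore lies in $[-1,1]$, so $w(x)\sign w(x) = |w(x)| \ge w(x)\cdot s$ for any $s\in[-1,1]$ (splitting into the cases $w(x)\ge 0$ and $w(x)<0$). Consequently the outer absolute value in the statement is redundant, and it suffices to estimate the signed integral.

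Next, since $\int \beta^h(x-y)\,dy = 1$, I would write $\sign w(x) = \int\sign w(x)\beta^h(x-y)\,dy$ and $(\sign w)^h(x) = \int \sign w(y)\beta^h(x-y)\,dy$, so that
$$
w(x)\bigl[\sign w(x) - (\sign w)^h(x)\bigr] = \int w(x)\bigl[\sign w(x) - \sign w(y)\bigr]\beta^h(x-y)\,dy.
$$
The key algebraic step is the pointwise bound
$$
\bigl|w(x)[\sign w(x) - \sign w(y)]\bigr| \le 2|w(x) - w(y)|.
$$
To prove this, I would rewrite $w(x)\sign w(y) = w(y)\sign w(y) + (w(x)-w(y))\sign w(y) = |w(y)| + (w(x)-w(y))\sign w(y)$, which gives the identity
$$
w(x)[\sign w(x) - \sign w(y)] = \bigl(|w(x)| - |w(y)|\bigr) - (w(x)-w(y))\sign w(y),
$$
and then the reverse triangle inequality together with $|\sign w(y)|\le 1$ yields the factor $2$.

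Finally, I would apply Fubini to get
$$
\int_{\R^n}\!\!\int_{\R^n} 2|w(x)-w(y)|\beta^h(x-y)\,dy\,dx = 2\int_{\R^n}\beta^h(z)\left(\int_{\R^n}|w(x) - w(x-z)|\,dx\right)dz,
$$
and use that $\supp\beta^h\subset B_h(0)$ so the inner $L^1$ difference is bounded by $\omega_w(|z|)\le\omega_w(h)$; since $\beta^h$ integrates to $1$, the total is at most $2\omega_w(h)$.

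The only real obstacle is the pointwise inequality in the second paragraph; everything else is routine manipulation of mollifiers and Fubini. I expect the whole proof to be quite short.
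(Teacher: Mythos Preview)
Your proposal is correct and follows essentially the same route as the paper's proof: the same algebraic identity $w(x)\sign w(y)=|w(y)|+(w(x)-w(y))\sign w(y)$ yielding the pointwise bound $2|w(x)-w(y)|$, followed by the same Fubini-and-support argument for the mollifier. Your preliminary observation that the integrand is already nonnegative (so the outer absolute value is redundant) is a pleasant extra, but the paper bypasses it by simply applying $\bigl|\int\ldots\,dy\bigr|\le\int|\ldots|\,dy$ before invoking the pointwise estimate, which amounts to the same thing.
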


\begin{proof}
First, notice that for each $x,y\in\R^n$
\begin{align*}
||w(x)|-w(x)\sign w(y)|=||w(x)|-(w(x)-w(y))\sign w(y)-w(y)\sign w(y)|= \\||w(x)|-|w(y)|-(w(x)-w(y))\sign w(y)|\le \\
||w(x)|-|w(y)||+|w(x)-w(y)|\le 2|w(x)-w(y)|.
\end{align*}
With the help of above inequality we obtain
\begin{align*}
\int_{\R^n} ||w(x)|-w(x)(\sign w)^h(x)|dx= \\ \int_{\R^n} \left|\int_{\R^n} (|w(x)|-w(x)\sign w(x-y))\beta_h(y)dy\right|dx\le \\
\int_{\R^n} \int_{\R^n} ||w(x)|-w(x)\sign w(x-y)|\beta_h(y)dydx\le \\ \int_{\R^n}\int_{\R^n} 2|w(x)-w(x-y)|\beta_h(y)dydx=\\
2\int_{|y|\le h}\left(\int_{\R^n} |w(x)-w(x-y)|dx\right) \beta_h(y)dy\le2\omega_w(h),
\end{align*}
as was to be proved.
\end{proof}

As it readily follows from Lemma~\ref{lem2}, for any $\rho=\rho(x)\in C_0^1(\R^n)$
\begin{align}\label{17}
\left|\int_{\R^n} |w(x)|\rho(x)dx-\int_{\R^n}w(x)\rho(x)(\sign w)^h(x)dx\right|\le \nonumber\\
\int_{\R^n} ||w(x)|-w(x)(\sign w)^h(x)|\rho(x)dx\le 2\|\rho\|_\infty\omega_w(h).
\end{align}
We apply this relation to the function $w(x)=u_r(t_0+\Delta t,x)-u_r(t_0,x)$ for fixed $t_0,\Delta t>0$, $r\in\N$.
In view of estimate (\ref{estx}) for every $\Delta x\in\R^n$, $|\Delta x|<h$,
\begin{align*}
\int_{\R^n}|w(x+\Delta x)-w(x)|dx\le \int_{\R^n}|u_r(t_0,x+\Delta x)-u_r(t_0,x)|dx+ \\ \int_{\R^n}|u_r(t_0+\Delta t,x+\Delta x)-u_r(t_0+\Delta t,x)|dx\le 2\omega_x(h),
\end{align*}
so that $\omega_w(h)\le 2\omega^x(h)$. It follows from (\ref{17}), (\ref{16}), and (\ref{conder}) that
\begin{align}
\int_{\R^n} |w(x)|\rho(x)dx\le\left|\int_{\R^n}w(x)\rho(x)(\sign w)^h(x)dx\right|+4\|\rho\|_\infty\omega^x(h)=\nonumber\\
\left|\int_{\R^n}(u_r(t_0+\Delta t,x)-u_r(t_0,x))\rho(x)(\sign w)^h(x)dx\right|+4\|\rho\|_\infty\omega^x(h)\le \nonumber\\
N\|\nabla(\rho(x)(\sign w)^h(x))\|_1\Delta t+4\|\rho\|_\infty\omega^x(h)\le c_\rho(\Delta t/h+\omega^x(h)),
\end{align}
where $0<h<1$, and $c_\rho$ is a constant depending only on $\rho$. Since the left hand side of this estimate does not depend on $h$, we arrive at the estimate
\begin{equation}\label{estt}
\int_{\R^n} |u_r(t_0+\Delta t,x)-u_r(t_0,x)|\rho(x)dx\le c_\rho\omega^t(\Delta t),
\end{equation}
where $\displaystyle\omega^t(\Delta t)=\inf_{0<h<1}(\Delta t/h+\omega^x(h))$. Taking $h=(\Delta t)^{1/2}$, we find
$\omega^t(\Delta t)\le (\Delta t)^{1/2}+\omega^x((\Delta t)^{1/2})$ for all $\Delta t\in (0,1)$. Thus, $\omega^t(\Delta t)\to 0$ as $\Delta t\to 0$.
Both estimates (\ref{estx}), (\ref{estt}) are uniform with respect to $t_0>0$ and $r\in\N$. By the known compactness criterium they imply pre-compactness of the sequence $u_r$ in $L^1_{loc}(\Pi)$. Therefore, passing to a subsequence, we can assume that $u_r\to u$ as $r\to\infty$ in $L^1_{loc}(\Pi)$.  We conclude that all the requirements of Proposition~\ref{pro3} are satisfied (with the constant sequence $u_{r0}=u_0$), and by this proposition $u=u(t,x)$ is an e.s. of (\ref{1}), (\ref{ini}).

For more general initial functions $u_0(x)\in (c+L^1(\R^n))\cap L^\infty(\R^n)$, where $c\in\R$, one can make the change $\tilde u=u-c$. As is easy to verify, $u$ is an e.s. of (\ref{1}), (\ref{ini}) if and only if $\tilde u$ is an e.s. to the problem
$$
u_t+\div_x\varphi(c+u)=0, \quad u(0,x)=u_0(x)-c,
$$
corresponding to the parametrization $u=b(v)-c$, $\bar\varphi(c+u)\ni g(v)$.
The existence of such an e.s. has been just shown. This yields the existence of e.s. to the original problem. Thus, we have proved the following result.

\begin{theorem}\label{th1}
For every initial function $u_0\in (c+L^1(\R^n))\cap L^\infty(\R^n)$, where $c\in\R$, there exists an e.s. of problem (\ref{1}), (\ref{ini}).
\end{theorem}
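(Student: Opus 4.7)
The plan is to combine the approximation scheme already set up before the theorem with the compactness criterion obtained from the spatial and temporal equicontinuity estimates, then invoke Proposition~\ref{pro3} to pass to the limit. Throughout I would first treat the case $u_0\in L^1(\R^n)\cap L^\infty(\R^n)$ and then reduce the general case $u_0\in(c+L^1(\R^n))\cap L^\infty(\R^n)$ to this one by the shift $\tilde u=u-c$, noticing that such a shift transforms the parametrization $(b(v),g(v))$ of $\bar\varphi(u)$ into the parametrization $(b(v)-c,g(v))$ of $\bar\varphi(c+u)$ and preserves the entropy inequality.

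For integrable initial data, I would first regularize $b$ to $b_r(u)=b(u)+u/r$ to obtain the standard scalar conservation law (\ref{ap1}) with continuous flux $\varphi_r=g\circ b_r^{-1}$. Existence of the largest Kruzhkov e.s.\ $u_r$ follows from \cite{ABK}, and since this solution is an e.s.\ in the sense of Definition~\ref{def1} (with the trivial regular measure valued e.s.\ $\delta(v-b_r^{-1}(u_r))$), the maximum principle (Corollary~\ref{cor1}) gives a uniform $L^\infty$ bound by $M=\|u_0\|_\infty$. Spatial equicontinuity (\ref{estx}) is immediate from $L^1$-nonexpansivity and translation invariance. For temporal equicontinuity the key obstacle—and this is the main technical step—is that one cannot simply differentiate $\int|u_r(t,x)-u_r(t_0,x)|dx$ in time, so one must run Kruzhkov's trick: apply Lemma~\ref{lem2} to $w(x)=u_r(t_0+\Delta t,x)-u_r(t_0,x)$ with the mollified sign $(\sign w)^h$, use the weak formulation (\ref{15}) with the uniform flux bound $|\varphi_r(u_r)|\le N$ that comes from coercivity of $b$ (and hence of each $b_r$), and optimize in $h$ by choosing $h=(\Delta t)^{1/2}$ to get (\ref{estt}) with modulus $\omega^t(\Delta t)\to 0$.

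Having uniform $L^\infty$ bounds together with (\ref{estx}) and (\ref{estt}), the Fréchet--Kolmogorov criterion yields precompactness of $\{u_r\}$ in $L^1_{loc}(\Pi)$. Extracting a subsequence $u_r\to u$ strongly in $L^1_{loc}(\Pi)$, I would apply Proposition~\ref{pro3} with the constant initial sequence $u_{r0}\equiv u_0$ and the fact that $b_r\to b$ uniformly on compacts; the proposition precisely delivers that the limit $u$ is an e.s.\ of (\ref{1}), (\ref{ini}) in the sense of Definition~\ref{def1}. I expect no serious difficulty beyond checking the uniform bound $\varphi_r(u_r)\in L^\infty$ carefully (which is where the coercivity of $b$ plus the uniform $L^\infty$ bound on $u_r$ enters, via $b_r^{-1}([-M,M])\subset(-R,R)$ and continuity of $g$ on $[-R,R]$).

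Finally, for $u_0\in(c+L^1(\R^n))\cap L^\infty(\R^n)$ the substitution $\tilde u=u-c$ places the problem in the already-settled $L^1\cap L^\infty$ framework with the new parametrization $u=b(v)-c$, $\bar\varphi(c+u)\ni g(v)$. An e.s.\ $\tilde u$ for the shifted problem yields an e.s.\ $u=\tilde u+c$ for the original problem, because Definition~\ref{def1} is insensitive to this translation: the same measure valued function $\nu_{t,x}$ works, and the push-forward condition reads $(b-c)^*\nu_{t,x}=\delta(u-c-\tilde u(t,x))$ iff $b^*\nu_{t,x}=\delta(u-u(t,x))$, while the entropy inequality (\ref{entim}) for the shifted flux with constant $k$ coincides with (\ref{entim}) for the original flux with the same $k$. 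This completes the proof.
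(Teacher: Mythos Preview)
Your proposal is correct and follows essentially the same route as the paper: regularize $b$ to $b_r=b+\cdot/r$, invoke \cite{ABK} for the largest e.s.\ $u_r$ of (\ref{ap1}), derive the uniform spatial and temporal moduli (\ref{estx}) and (\ref{estt}) via $L^1$-nonexpansivity and Kruzhkov's mollified-sign trick (Lemma~\ref{lem2}), extract a strong $L^1_{loc}$ limit, apply Proposition~\ref{pro3}, and finally handle $u_0\in(c+L^1)\cap L^\infty$ by the shift $\tilde u=u-c$ with the reparametrization $(b(v)-c,g(v))$. The only cosmetic difference is that the paper appeals directly to the $L^1$-contraction from \cite{ABK} rather than to Corollary~\ref{cor1} for the $L^\infty$ bound, but your justification via Definition~\ref{def1} and Corollary~\ref{cor1} is equally valid.
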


Concerning the uniqueness, it may fail even if $n=1$ and $u_0\in L^1(\R)\cap L^\infty(\R)$.

\begin{example}\label{ex1}
We will study the problem
\begin{equation}\label{18}
u_t+H(u)_x=0,  \quad u(0,x)=u_0(x)\doteq\frac{1}{1+x^2},
\end{equation}
where $H(u)=\sign^+ u$ is the Heaviside function. The natural solution of this problem is the stationary solution
$u(t,x)\equiv u_0(x)$. To construct other e.s., we choose the appropriate continuous parametrization of the flux
(it corresponds (\ref{ext}) if we set $H(0)=1/2$)
$$u=b(v)=\left\{\begin{array}{lcr} v & , & v<0, \\ 0 & , & 0\le v\le 1, \\ v-1 & , & v>1, \end{array}\right. \quad
\tilde H(u)\ni g(v)=\left\{\begin{array}{lcr} 0 & , & v<0, \\ v & , & 0\le v\le 1, \\ 1 & , & v>1, \end{array}\right.$$
where $\tilde H(u)=H(u)$, $u\not=0$, $\tilde H(0)=[0,1]$.
\end{example}
We are going to find an e.s. of (\ref{18}) in the form
$$
u(t,x)=\left\{\begin{array}{lcr} 1/(1+x^2) & , & x>x(t), \\ 0 & , & x<x(t), \end{array}\right.
$$
where $x(t)\in C^1((\alpha,\beta))$, $0\le\alpha<\beta\le+\infty$; $x'(t)>0$, $\lim\limits_{t\to\alpha+} x(t)=-\infty$, $\lim\limits_{t\to\beta-} x(t)=+\infty$ if $\beta<+\infty$. The corresponding measure valued e.s. $\nu_{t,x}$ is assumed being regular, i.e., it is an e.s. $v=v(t,x)\in L^\infty(\Pi)$ of the conservation law
$b(v)_t+g(v)_x=0$ such that $u=b(v)$. In particular, $v(t,x)=1+1/(1+x^2)$ if $x>x(t)$ and $v(t,x)\in [0,1]$ if $x<x(t)$.
Since in the latter case $v_x=b(v)_t+g(v)_x=0$ in the sense of distributions, we claim that $v$ does not depend on $x$, i.e., $v=v(t)$ in the domain $x<x(t)$. As is easy to realize, both the Rankine-Hugoniot and the Oleinik conditions should be fulfilled on the discontinuity line $x=x(t)$.
They means, respectively, that $x'(t)$ coincides with the slope of the chord connected the points $(b(v-),g(v-))$, $(b(v+),g(v+))$ of the graph of the flux function, and that this graph lies above of the indicated chord then $v$ runs between
$v-=\lim\limits_{x\to x(t)-} v(t,x)=v(t)$ and $v+=\lim\limits_{x\to x(t)+} v(t,x)=1+1/(1+x(t)^2)>v-$.
Notice that the Oleinik condition is automatically satisfied while the Rankine-Hugoniot condition provides the differential equation $x'(t)=(1+x^2)(1-v(t))$. In particular, taking $v(t)\equiv 0$ and solving the above equation, we obtain the discontinuity curve $x=x(t)=\tan(t-t_0)$, $t_0-\pi/2<t<t_0+\pi/2$ with the required properties for all $t_0\ge\pi/2$. Varying $v(t)$, we can construct many other e.s. For example, choosing $v(t)=t^2/(1+t^2)$ and a particular solution $x=-1/t$ of the differential equation $x'(t)=(1+x^2)(1-v(t))=(1+x^2)/(1+t^2)$, we find the e.s. $u=1/(1+x^2)$ if $xt>-1$, $u=0$ if $xt<-1$.
We conclude that an e.s. of (\ref{18}) is not unique. In the case of merely continuous flux vector an e.s. of the problem
(\ref{1}), (\ref{ini}) may also be non-unique but only if $n>1$, see \cite{KrPa1,KrPa2}.

\section{Existence of the largest and the smallest e.s.}

To construct the largest e.s., we choose a strictly decreasing sequence $d_r>d=\esssup u_0(x)$, $r\in\N$, and the corresponding sequence $u_r$ of e.s. of (\ref{1}), (\ref{ini}) with initial functions
$$
u_{0r}(x)=\left\{\begin{array}{lcr} u_0(x) & , & |x|\le r, \\ d_r & , & |x|>r. \end{array}\right.
$$
Since $u_{0r}\in (d_r+L^1(\R^n))\cap L^\infty(\R^n)$ an e.s. $u_r$ actually exists by Theorem~\ref{th1}. Observe that $\forall r\in\N$
$
u_0(x)\le u_{0r+1}(x)\le u_{0r}(x)\le d_r \ \mbox{ a.e. on } \R^n, \mbox{ and }
\lim\limits_{r\to\infty} u_{0r}(x)=u_0(x).
$
Denote $\delta_r=d_r-d_{r+1}>0$. By the maximum principle $u_r\le d_r$ for all $r\in\N$. Therefore,
$$
\{(t,x) | u_{r+1}(t,x)\ge u_r(t,x)\}\subset\{(t,x) | d_{r+1}\ge u_r(t,x)\}=\{(t,x) | d_r-u_r(t,x)\ge \delta_r\}.
$$
By Chebyshev's inequality and Corollary~\ref{cor2} for each $T>0$
\begin{align*}
\meas\{ \ (t,x)\in (0,T)\times\R^n \ | \ u_{r+1}(t,x)\ge u_r(t,x) \ \}\le \\ \meas\{ \ (t,x)\in (0,T)\times\R^n \ | \  d_r-u_r(t,x)\ge \delta_r \ \}\le \\
\frac{1}{\delta_r}\int_{(0,T)\times\R^n}|d_r-u_r|dtdx\le  \frac{T}{\delta_r}\int_{\R^n}|d_r-u_{0r}|dx= \frac{T}{\delta_r}\int_{|x|<r}(d_r-u_0)dx<+\infty.
\end{align*}
We see that the assumption of Proposition~\ref{pro2} regarded to the e.s. $u_{r+1}$ and $u_r$ is satisfied and by this proposition $u_{r+1}\le u_r$ a.e. on $\Pi$. Since
$u_{0r}\ge u_0\ge a\doteq\essinf u_0(x)$ then $u_r\ge a$, by the minimum principle. Hence, the sequence
$$
u_r(t,x)\mathop{\to}_{r\to\infty} u_+(t,x)\doteq\inf_{r>0} u_r(t,x)
$$
a.e. on $\Pi$, as well as in $L^1_{loc}(\Pi)$. By Proposition~\ref{pro3} the limit function $u_+$ is an e.s. of original problem (\ref{1}), (\ref{ini}).
Let us demonstrate that $u_+$ is the largest e.s. of this problem.
For that, we choose an arbitrary e.s. $u=u(t,x)$ of (\ref{1}), (\ref{ini}). By the maximum principle,
$u\le d$. Therefore, for each $r\in\N$
$$
\{ (t,x)\in \Pi_T=(0,T)\times\R^n | u\ge u_r\}\subset\{(t,x)\in \Pi_T | d\ge u_r\}= \{ (t,x)\in \Pi_T | d_r-u_r\ge d_r-d\}$$
and consequently
\begin{align*}
\meas\{(t,x)\in \Pi_T | u\ge u_r\}\le\frac{1}{d_r-d}\int_{\Pi_T}|d_r-u_r|dx\le \frac{T}{d_r-d}\int_{|x|<r}(d_r-u_0)dx<+\infty,
\end{align*}
where we use again Chebyshev's inequality and Corollary~\ref{cor2}.
Hence, the requirement of Proposition~\ref{pro2}, applied to the e.s. $u$ and $u_r$, is satisfied and, by the comparison principle,
the inequality $u_0\le u_{0r}$ implies that $u\le u_r$ a.e. on $\Pi$. In the limit as $r\to\infty$ we conclude that $u\le u_+$ a.e. on $\Pi$. Hence, $u_+$ is the unique largest e.s.
The smallest e.s. $u_-$ can be found as $u_-=-\tilde u_+$, where $\tilde u_+$ is the largest e.s. to the problem (\ref{1-}).

We have established the existence of the largest and the smallest e.s. Let us demonstrate that these e.s. satisfy the stability and monotonicity properties with respect to initial data.

\begin{theorem}\label{th2}
Let $u_{1+},u_{2+}\in L^\infty(\Pi)$ be the largest e.s. of (\ref{1}), (\ref{ini}) with initial functions $u_{10}$, $u_{20}$, respectively. Then for a.e. $t>0$
$$
\int_{\R^n} (u_{1+}(t,x)-u_{2+}(t,x))^+dx\le\int_{\R^n} (u_{10}(x)-u_{20}(x))^+dx.
$$
In particular, if $u_{10}\le u_{20}$ a.e. in $\R^n$ then $u_{1+}\le u_{2+}$ a.e. in $\Pi$.
\end{theorem}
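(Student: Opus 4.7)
My plan is to approximate both $u_{1+}$ and $u_{2+}$ via the same monotone construction used in the previous section, taking care that the constants-at-infinity remain strictly ordered, and then apply Proposition~\ref{pro2} to the approximants and send $r\to\infty$.

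Concretely, set $d_i=\esssup u_{i0}$, pick any $c\ge\max(d_1,d_2)$, and choose strictly decreasing sequences $d_{i,r}\searrow c$ satisfying $d_{1,r}<d_{2,r}$ for every $r$ (for example $d_{1,r}=c+1/(2r)$, $d_{2,r}=c+1/r$). Define
$$
u_{i0,r}(x)=\begin{cases} u_{i0}(x), & |x|\le r,\\ d_{i,r}, & |x|>r,\end{cases}
$$
and let $u_{i,r}$ be an e.s.\ of (\ref{1}), (\ref{ini}) with initial data $u_{i0,r}\in (d_{i,r}+L^1(\R^n))\cap L^\infty(\R^n)$, furnished by Theorem~\ref{th1}. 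The argument constructing the largest e.s.\ goes through verbatim in this slightly modified setting, producing $u_{i,r+1}\le u_{i,r}$ and $u_{i,r}\searrow u_{i+}$ a.e.\ on $\Pi$; the only point worth checking is a small robustness remark, namely that the Chebyshev estimates in the construction rely only on $d_{i,r+1}<d_{i,r}$ and $d_i<d_{i,r}$ (not on the exact descent level $c$), and that any monotone limit of such a scheme both dominates and is dominated by $u_{i+}$ and therefore coincides with it (the domination uses Proposition~\ref{pro2} applied to $u_{i+}$ and $u_{i,r}$, whose finite-measure hypothesis is verified exactly as in the paper's existence proof).

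Given this, for each fixed $r$ I would verify the hypothesis of Proposition~\ref{pro2} for the pair $(u_{1,r},u_{2,r})$. The maximum principle (Corollary~\ref{cor1}) yields $u_{1,r}\le d_{1,r}$, hence
$$
\{(t,x)\in\Pi_T : u_{1,r}\ge u_{2,r}\}\subset\{d_{2,r}-u_{2,r}\ge d_{2,r}-d_{1,r}\},
$$
and Chebyshev's inequality together with Corollary~\ref{cor2} applied to $u_{2,r}$ with constant $d_{2,r}$ gives
$$
\meas\{u_{1,r}\ge u_{2,r}\}\cap\Pi_T \;\le\; \frac{T}{d_{2,r}-d_{1,r}}\int_{|x|\le r}(d_{2,r}-u_{20})\,dx\;<\;\infty.
$$
Proposition~\ref{pro2} therefore applies and delivers, for a.e.\ $t>0$,
$$
\int_{\R^n}(u_{1,r}-u_{2,r})^+\,dx \;\le\; \int_{\R^n}(u_{10,r}-u_{20,r})^+\,dx.
$$
Since $(u_{10,r}-u_{20,r})^+=(u_{10}-u_{20})^+$ on $\{|x|\le r\}$ and equals $(d_{1,r}-d_{2,r})^+=0$ on $\{|x|>r\}$, the right-hand side reduces to $\int_{|x|\le r}(u_{10}-u_{20})^+dx$ and increases to $\int_{\R^n}(u_{10}-u_{20})^+dx$ as $r\to\infty$.

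To finish, I would invoke the a.e.\ convergence $u_{i,r}\to u_{i+}$ and Fatou's lemma on the left-hand side to obtain, for a.e.\ $t>0$,
$$
\int_{\R^n}(u_{1+}(t,x)-u_{2+}(t,x))^+\,dx \;\le\; \int_{\R^n}(u_{10}(x)-u_{20}(x))^+\,dx,
$$
which is the stability estimate; the monotonicity statement is then the special case $u_{10}\le u_{20}$ a.e., in which the right-hand side vanishes. The main subtlety I expect to be fiddly is the simultaneous constraint $d_{i,r}>d_i$ and $d_{1,r}<d_{2,r}$: it forces a common descent level $c\ge\max(d_1,d_2)$ rather than $c=d_i$, which in turn necessitates the small robustness check above that the modified approximating scheme still recovers the canonical largest e.s.\ in the limit. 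Once this is in hand, the remainder of the proof is the Chebyshev-plus-Proposition~\ref{pro2} template already exploited in the preceding section.
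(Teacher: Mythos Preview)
Your proof is correct and follows essentially the same route as the paper: approximate both largest e.s.\ by the monotone scheme with constants-at-infinity chosen strictly ordered, invoke Proposition~\ref{pro2} via a Chebyshev bound, then pass to the limit with Fatou. The only cosmetic difference is that the paper takes a single decreasing sequence $d_r>\max(\esssup u_{10},\esssup u_{20})$ and sets the outer values to $d_r$ and $d_r+1$, giving a fixed gap of~$1$ in the Chebyshev denominator; your choice of a shrinking gap $d_{2,r}-d_{1,r}$ works too but is slightly less economical, and your ``robustness check'' is unnecessary since the paper's construction of the largest e.s.\ already allows any strictly decreasing sequence above $\esssup u_{i0}$ without requiring convergence to that value.
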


\begin{proof}
We choose a decreasing sequence $d_r>d=\max(\esssup u_{10}(x),\esssup u_{20}(x))$, $r\in\N$, and define the following sequences of initial functions
$$
u_{1r}^0(x)=\left\{\begin{array}{lcr} u_{10}(x) & , & |x|\le r, \\ d_r & , & |x|>r, \end{array}\right. \quad
u_{2r}^0(x)=\left\{\begin{array}{lcr} u_{20}(x) & , & |x|\le r, \\ d_r+1 & , & |x|>r. \end{array}\right.
$$
Let $u_{1r}=u_{1r}(t,x)$, $u_{2r}=u_{2r}(t,x)$ be e.s. of problem (\ref{1}), (\ref{ini}) with initial functions $u_{1r}^0$,
$u_{2r}^0$, respectively. As was demonstrated above, the sequences $u_{1r}$, $u_{2r}$ decrease and converges in $L^1_{loc}(\Pi)$ to the largest e.s. $u_{1+}$, $u_{2+}$, respectively. By the maximum principle $u_r\le d_r$ a.e. in $\Pi$ and therefore for each $T>0$
\begin{align*}
\{ (t,x)\in\Pi_T | u_{1r}(t,x)\ge u_{2r}(t,x) \}\subset\{ (t,x)\in\Pi_T | d_r\ge u_{2r}(t,x) \}\subset \\ \{ (t,x)\in\Pi_T | d_r+1-u_{2r}(t,x)\ge 1 \}.
\end{align*}
By Chebyshev inequality and Corollary~\ref{cor2}
\begin{align*}
\meas\{ (t,x)\in\Pi_T | u_{1r}(t,x)\ge u_{2r}(t,x) \}\le\meas\{ (t,x)\in\Pi_T | d_r+1-u_{2r}(t,x)\ge 1 \}\le \\ \int_{\Pi_T}|d_r+1-u_{2r}(t,x)|dtdx\le T\int_{\R^n}|d_r+1-u_{2r}^0(x)|dx= \\ T\int_{|x|<r}(d_r+1-u_{20}(x))dx<\infty,
\end{align*}
which allows to apply Proposition~\ref{pro2} and conclude that for a.e. $t>0$ and all $r\in\N$
\begin{align*}
\int_{\R^n} (u_{1r}(t,x)-u_{2r}(t,x))^+dx\le \int_{\R^n}(u_{1r}^0(x)-u_{2r}^0(x))^+dx=\\
\int_{|x|<r}(u_{10}(x)-u_{20}(x))^+dx\le \int_{\R^n}(u_{10}(x)-u_{20}(x))^+dx.
\end{align*}
To complete the proof, it remains only to pass to the limit as $r\to\infty$ in above relation with the help of Fatou's lemma.
\end{proof}

\begin{corollary}\label{cor3} With notations of Theorem~\ref{th2} for a.e. $t>0$
$$
\int_{\R^n} |u_{1+}(t,x)-u_{2+}(t,x)|dx\le\int_{\R^n} |u_{10}(x)-u_{20}(x)|dx.
$$
\end{corollary}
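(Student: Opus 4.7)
The plan is to derive Corollary~\ref{cor3} as an immediate consequence of Theorem~\ref{th2} by exploiting the identity $|z|=z^++z^-=z^++(-z)^+$, so that the $L^1$ contraction estimate for $|u_{1+}-u_{2+}|$ decomposes as the sum of two one-sided contraction estimates for $(u_{1+}-u_{2+})^+$ and $(u_{2+}-u_{1+})^+$.

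First, I would apply Theorem~\ref{th2} directly to the pair $(u_{1+},u_{2+})$ with initial data $(u_{10},u_{20})$ to obtain
\begin{equation*}
\int_{\R^n}(u_{1+}(t,x)-u_{2+}(t,x))^+ dx\le\int_{\R^n}(u_{10}(x)-u_{20}(x))^+ dx
\end{equation*}
for a.e.\ $t>0$. Next, since the theorem is symmetric in the roles of the two initial data, I would apply it again, this time to the pair $(u_{2+},u_{1+})$ with initial data $(u_{20},u_{10})$, yielding
\begin{equation*}
\int_{\R^n}(u_{2+}(t,x)-u_{1+}(t,x))^+ dx\le\int_{\R^n}(u_{20}(x)-u_{10}(x))^+ dx
\end{equation*}
for a.e.\ $t>0$.

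Adding the two inequalities and using $|z|=z^++(-z)^+$ on both sides then gives the desired bound. There is essentially no obstacle here: the only subtlety is that the two exceptional null sets of $t$'s from the two applications of Theorem~\ref{th2} must be discarded together, which still leaves a set of full measure in $\R_+$. No further approximation or doubling-of-variables argument is required, since all the analytic work has already been done in the proof of Theorem~\ref{th2}.
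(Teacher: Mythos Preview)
Your proposal is correct and matches the paper's own proof essentially verbatim: the paper also applies Theorem~\ref{th2} once to $(u_{1+},u_{2+})$ and once with the roles swapped, then adds the two one-sided inequalities using $|z|=z^++(-z)^+$. Your remark about combining the two null exceptional sets is accurate and is the only point beyond a direct citation of Theorem~\ref{th2}.
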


\begin{proof}
By Theorem~\ref{th2} we find that for a.e. $t>0$
\begin{align*}
\int_{\R^n} (u_{1+}(t,x)-u_{2+}(t,x))^+dx\le\int_{\R^n} (u_{10}(x)-u_{20}(x))^+dx, \\
\int_{\R^n} (u_{2+}(t,x)-u_{1+}(t,x))^+dx\le\int_{\R^n} (u_{20}(x)-u_{10}(x))^+dx.
\end{align*}
Putting these inequalities together, we derive the desired result.
\end{proof}
The analogues of Theorem~\ref{th2} and Corollary~\ref{cor3} for the smallest e.s. follows from the results for the largest e.s. to the problem (\ref{1-}) after the change $u\to -u$.

Let us return to the problem (\ref{18}) from Example~\ref{ex1} and find the largest and the smallest e.s. explicitly.
First, we demonstrate that the largest e.s. $u_+$ coincides with the stationary solution $u_0=1/(1+x^2)$. Since the e.s. $u_+$ is the largest one, then $u_+\ge u_0$. Further, by Proposition~\ref{pro1}
for a.e. $t>0$
$$
\int_{\R} u_+(t,x)dx=\int_{\R} (u_+(t,x)-0)^+dx\le\int_\R (u_0(x)-0)^+dx=\int_\R u_0(x)dx,
$$
which implies the inequality
$$
\int_{\R} (u_+(t,x)-u_0(x))dx\le 0.
$$
Since $u_+\ge u_0$, we conclude that $u_+=u_0(x)$ a.e. in $\Pi$, as was claimed.

Let us show that the smallest e.s. of (\ref{18}) is given by the expression
$$
u_-(t,x)=\tilde u(t,x)\doteq\left\{\begin{array}{lcr} 1/(1+x^2) & , & x>\tan(t-\pi/2), \\ 0 & , & x<\tan(t-\pi/2),
\end{array}\right.
$$
and we agree that $\tilde u\equiv 0$ for $t\ge\pi$. As was shown in Example~\ref{ex1}, $\tilde u$ is indeed an e.s. of (\ref{18}). Therefore, the smallest e.s. $u_-\le\tilde u$. By the minimum principle we also claim that $u_-\ge 0$. Direct calculation shows that
\begin{equation}\label{e1}
\int \tilde u(t,x)dx=\int_{\tan(t-\pi/2)}^{+\infty}\frac{dx}{1+x^2}=(\pi-t)^+.
\end{equation}
Observe that $(u_-)_t+H(u_-)_x=0$ in $\D'(\Pi)$, where we have to choose $H(0)=0$ because $v=v(t)\equiv 0$ for $x<\tan(t-\pi/2)$, see Example~\ref{ex1}. This easily implies that for a.e. $r>0$
$$
\frac{d}{dt}\int_{-r}^r u_-(t,x)dx=H(u_-(t,-r))-H(u_-(t,r))\ge -1 \ \mbox{ in } \D'(\R),
$$
which, in turn, implies the estimate
$
\int_{-r}^r u_-(t,x)dx\ge\int_{-r}^r u_0(x)dx-t.
$
Passing in this estimate to the limit as $r\to+\infty$, we find that
$
\int u_-(t,x)dx\ge \int u_0(x)dx-t=\pi-t.
$
Taking also into account that $u_-\ge 0$, we see that for a.e. $t>0$
$$
\int u_-(t,x)dx\ge (\pi-t)^+.
$$
Comparing this inequality with (\ref{e1}), we get
$$
\int (\tilde u(t,x)-u_-(t,x))dx\le 0
$$
for a.e. $t>0$.  Since $\tilde u\ge u_-$, this implies the desired identity $u_-=\tilde u(t,x)$.

\medskip
In the end of this section we put the example promised in Introduction, which shows the necessity of the multi-valued extension of the flux.

\begin{example}\label{ex2}
Let $n=1$ and $\chi_0(u)$ be a function that is different from zero only at the zero point, where it equals $1$, i.e. $\chi_0(u)$ is the indicator function of the singleton $\{0\}$. We consider the Riemann problem
$$
u_t+(\chi_0(u))_x=0, \quad u(0,x)=H(x),
$$
where $H(x)$ is the Heaviside function.
Putting the entropy relation
$$
|u-k|_t+[\sign(u-k)(\chi_0(u)-\chi_0(k))]_x\le 0
$$
together with the identities
$$
\pm\left((u-k)_t+(\chi_0(u)-\chi_0(k))_x\right)=0,
$$
we get that for each $k\in\R$
\begin{equation}\label{ex3}
((u-k)^\pm)_t+[\sign^\pm(u-k)(\chi_0(u)-\chi_0(k))]_x\le 0 \ \mbox{ in } \D'(\Pi).
\end{equation}
It follows from this relation that $((u-1)^+)_t\le 0$, $((u+\varepsilon)^-)_t\le 0$ in $\D'(\Pi)$ for each $\varepsilon>0$ and since $0\le u(0,x)\le 1$, we find that $(u-1)^+=(u+\varepsilon)^-=0$, that is, $-\varepsilon\le u\le 1$ a.e. in $\Pi$. In view of arbitrariness of $\varepsilon>0$, we see that $0\le u\le 1$ a.e. in $\Pi$. It again follows from (\ref{ex3}) that  $((u-\varepsilon)^+)_t\le 0$ in $\D'(\Pi)$ for every $\varepsilon>0$. This implies that $(u-\varepsilon)^+\le (H(x)-\varepsilon)^+=0$ a.e. in the quarter-plane $t>0$, $x<0$. Since $\varepsilon>0$ is arbitrary, we conclude that $u(t,x)=0$ in this quarter-plane. Now, we will demonstrate that $u=1$ a.e. in the quarter-plane $t>0$, $x>0$.
For that, we apply the relation $(1-u)_t-\chi_0(u)_x=0$ to the test function $f=p(\min(R+T-t-x,x))h(t)$,
where $T>0$, $R>2$, $p(v)\in C^1(\R)$ is a function with the properties $p'\ge 0$, $p(v)=0$ for $v\le 0$, $p(v)>0$ for $v>0$, $p(v)=1$ for $v\ge 1$; $h(t)\in C_0^1((0,T))$, $h\ge 0$ (notice that $p\equiv 1$ in a neighborhood of a singular line $x=R+T-t-x$, $t<T$, which implies that $f\in C_0^1(\Pi)$). As a result, we get
\begin{align}\label{ex4}
\int_\Pi (1-u)ph'(t)dtdx+\int_{x>R+T-t-x}(-(1-u)+\chi_0(u))p'hdtdx+\nonumber\\ \int_{x<R+T-t-x}(-\chi_0(u))p'hdtdx=0.
\end{align}
Observing that $0\le\chi_0(u)\le 1-u$ for $u=u(t,x)\in [0,1]$, and that $p'=p'(\min(R+T-t-x,x))\ge 0$, we find that
the last two integrals in (\ref{ex4}) are non-positive and therefore for all $h=h(t)\in C_0^1((0,T))$, $h\ge 0$
$$
\int_0^T\left(\int_{\R^n}(1-u)p(\min(R+T-t-x,x))dx\right)h'(t)dt=\int_\Pi (1-u)ph'(t)dtdx\ge 0.
$$
This means that
$$
\frac{d}{dt}\int_{\R^n}(1-u)p(\min(R+T-t-x,x))dx\le 0 \ \mbox{ in } \D'((0,T)).
$$
Taking into account the initial condition, we find that for a.e. $t\in (0,T)$
$$
\int_{\R^n}(1-u)p(\min(R+T-t-x,x))dx\le \int_{\R^n}(1-u_0(x))p(\min(R+T-x,x))dx=0
$$
since $u_0(x)=1$ for $x>0$ while $p(\min(R+T-x,x))=p(x)=0$ for $x\le 0$. In the limit as $R\to+\infty$, this relation implies that for a.e. $t\in (0,T)$
$$
\int_{\R^n}(1-u(t,x))p(x)dx=0.
$$
Since $p(x)>0$ for $x>0$, and $T>0$ is arbitrary, we conclude that $u(t,x)=1$ for a.e. $(t,x)\in\Pi$, $x>0$. We have established that our solution $u=H(x)$. But this function is not even a weak solution of our equation because the Rankine-Hugoniot relation $0=\chi_0(1)=\chi_0(0)=1$ is violated on the shock line $x=0$. Hence, our Riemann problem has no e.s. in the Kruzhkov sense. As we already know, there exists an e.s. of our problem in the sense of Definition~\ref{def1},
corresponding to the multi-valued extension $\tilde\chi_0(0)=[0,1]$ of the flux.
The corresponding continuous parametrization can be given by the functions
$$
u=b(v)=\left\{\begin{array}{lcr} v+1 & , & v<-1, \\ 0 & , & -1\le v\le 1, \\ v-1 & , & v>1, \end{array}\right. \quad
\tilde\chi_0(u)\ni g(v)=\left\{\begin{array}{lcr} 0 & , & |v|>1, \\ 1-|v| & , & |v|\le 1. \end{array}\right.
$$
Let us show that the stationary solution $u=H(x)$ is an e.s. of our problem.
The corresponding e.s. $v=v(t,x)$
of the equation $b(v)_t+g(v)_x=0$ can be chosen regular. For $x>0$ it is uniquely determined by the requirement $b(v)=u=1$ and therefore $v=2$. In the case $x<0$ one can chose  $v\equiv -1$ or $v\equiv 1$ (it is even possible to take measure valued function $\nu_{t,x}(v)=(1-\alpha)\delta(v+1)+\alpha\delta(v-1)$, $\alpha=\alpha(t,x)\in [0,1]$). By the construction both the Rankine-Hogoniot and the Oleinik conditions are satisfied in the shock line $x=0$. Hence $H(x)=b(v)$ is the required e.s.
\end{example}

\section{The case of periodic initial functions}

Let us study the particular case when the initial function $u_0(x)$ is periodic, $u_0(x+e)=u_0(x)$ a.e. in $\R^n$ for all $e\in L$, where $L\subset\R^n$ is a lattice of periods. Without loss of generality we may suppose that $L$ is the standard lattice $\Z^n$.

\begin{theorem}\label{th3}
The largest e.s. $u_+$ and the smallest e.s. $u_-$ of the problem (\ref{1}), (\ref{ini}) are space-periodic and coincide: $u_+=u_-$.
\end{theorem}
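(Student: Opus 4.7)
My plan is to argue in two steps: first, that $u_+$ and $u_-$ are $\Z^n$-periodic in $x$; second, that their difference vanishes, via a Kato-type inequality averaged over a period cell.

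\textbf{Step 1: Periodicity.} Fix $e\in\Z^n$. Since equation (\ref{1}) is translation-invariant in $x$ and $u_0(x+e)=u_0(x)$ a.e., the shifted function $\tilde u(t,x)=u_+(t,x+e)$ is again an e.s. of (\ref{1}), (\ref{ini}) with the same initial data $u_0$ (the associated measure-valued e.s. is simply the shift $\nu_{t,x+e}$). Maximality of $u_+$ gives $\tilde u\le u_+$ a.e.; applying the same argument with $-e$ in place of $e$ yields the reverse inequality, so $u_+(t,x+e)=u_+(t,x)$ a.e. Hence $u_+$ is $\Z^n$-periodic in $x$, and the same reasoning applies to $u_-$.

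\textbf{Step 2: Equality.} I apply (\ref{12}) with $u_1=u_+$, $u_2=u_-$. Since $u_-\le u_+$ by maximality/minimality from a common initial datum, $(u_+-u_-)^+=u_+-u_-$ and
\begin{equation*}
\partial_t(u_+-u_-)+\div_x G\le 0\quad\mbox{in }\D'(\Pi),
\end{equation*}
where $G\in L^\infty(\Pi;\R^n)$ by Corollary~\ref{cor1} and continuity of $g$. Test against $f(t,x)=\psi(t)\rho(x/R)$ with $\psi\in C_0^1((0,+\infty))$, $\psi\ge 0$, $\rho\in C_0^1(\R^n)$, $\rho\ge 0$, $\int\rho>0$, and divide the resulting inequality by $R^n$. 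The gradient contribution satisfies
\begin{equation*}
R^{-n-1}\left|\int_\Pi \psi(t)\,G\cdot(\nabla\rho)(x/R)\,dtdx\right|\le R^{-1}\|G\|_\infty\|\psi\|_1\|\nabla\rho\|_1\to 0.
\end{equation*}
For the time-derivative contribution, the change of variables $x=Ry$ gives
\begin{equation*}
R^{-n}\int_\Pi(u_+-u_-)\psi'(t)\rho(x/R)\,dtdx=\int_0^\infty\psi'(t)\left[\int_{\R^n}(u_+-u_-)(t,Ry)\rho(y)\,dy\right]dt.
\end{equation*}
Since $u_+-u_-$ is $\Z^n$-periodic in $x$, the standard weak-$*$ averaging $h(R\,\cdot)\rightharpoonup \int_{\T^n}h$ in $L^\infty(\R^n)$, valid for any bounded $\Z^n$-periodic $h$ (Riemann-sum or Fourier argument), gives the pointwise-in-$t$ convergence of the inner integral to $\bar F(t)\int\rho(y)\,dy$, where $\bar F(t)=\int_{\T^n}(u_+-u_-)(t,x)\,dx\ge 0$. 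Dominated convergence with majorant $\|u_+-u_-\|_\infty\|\rho\|_1|\psi'(t)|$ lifts this through the outer integral, and in the limit $R\to\infty$ the original inequality becomes
\begin{equation*}
\left(\int_{\R^n}\rho(y)\,dy\right)\int_0^\infty\psi'(t)\bar F(t)\,dt\ge 0.
\end{equation*}
Since $\int\rho>0$ and $\psi\ge 0$ is arbitrary, $\bar F'\le 0$ in $\D'((0,+\infty))$, so $\bar F$ is essentially non-increasing. Combined with $\bar F\ge 0$ and $\esslim_{t\to 0+}\bar F(t)=0$ (from the common initial trace (\ref{ini1}) of $u_\pm$), this forces $\bar F\equiv 0$, hence $u_+=u_-$ a.e.\ in $\Pi$.

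\textbf{Where the work lies.} The only delicate point is the $R\to\infty$ passage in the time-derivative term: one must invoke the $L^\infty$ weak-$*$ averaging for periodic functions and justify the interchange of this pointwise-in-$t$ limit with the $t$-integration. The gradient term dies automatically thanks to the extra $1/R$ from differentiating $\rho(x/R)$, so no finiteness of $\{u_+\ge u_-\}$ of the sort needed in Proposition~\ref{pro2} is required; periodicity replaces that hypothesis.
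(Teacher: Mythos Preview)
Your proof is correct and follows essentially the same route as the paper: establish periodicity of $u_\pm$ via uniqueness under translations, then test a Kato-type relation against $\psi(t)\rho(x/R)$, use periodic averaging to kill the flux term as $R\to\infty$, and conclude from the initial trace that $\int_{\T^n}(u_+-u_-)\,dx=0$. The only cosmetic difference is that the paper starts from the weak \emph{equality} (\ref{19}) (derived from (\ref{weak})) and obtains $\frac{d}{dt}\int_{\T^n}(u_+-u_-)\,dx=0$ directly, whereas you start from the \emph{inequality} (\ref{12}) and obtain $\le 0$, invoking $u_+\ge u_-$ once more to close.
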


\begin{proof}
Let $e\in L$. In view of periodicity of the initial function it is obvious that $u(t,x+e)$ is an e.s. of (\ref{1}), (\ref{ini}) if and only if $u(t,x)$ is an e.s. of the same problem. Therefore, $u_+(t,x+e)$ is the largest e.s.
of (\ref{1}), (\ref{ini}) together with $u_+$. By the uniqueness $u_+(t,x+e)=u_+(t,x)$ a.e. on $\Pi$ for all $e\in L$, that is $u_+$ is a space periodic function. In the same way we prove space periodicity of the minimal e.s. $u_-$.
Let $\nu_{t,x}^\pm(v)$ be measure valued e.s. of (\ref{1'}) corresponding to the e.s. $u_\pm$.
In view of (\ref{weak}), we have
\begin{equation}\label{19}
(u_+-u_-)_t+\div_x\int g(v)d(\nu_{t,x}^+-\nu_{t,x}^-)(v)=0 \ \mbox{ in } \D'(\Pi).
\end{equation}
Let $\alpha(t)\in C_0^1(\R_+)$, $\beta(y)\in C_0^1(\R^n)$, $\displaystyle\int_{\R^n}\beta(y)dy=1$. Applying (\ref{19}) to the test function $k^{-n}\alpha(t)\beta(x/k)$, with $k\in\N$, we arrive at the relation
\begin{equation}\label{20}
k^{-n}\int_\Pi(u_+-u_-)\alpha'(t)\beta(x/k)dtdx+k^{-n-1}\int_\Pi Q\cdot\nabla_y\beta(x/k)\alpha(t)dtdx=0,
\end{equation}
where the vector $Q=Q(t,x)=\int g(v)d(\nu_{t,x}^+-\nu_{t,x}^-)(v)\in L^\infty(\Pi,\R^n)$. We observe that
\begin{align*}
k^{-n-1}\left|\int_\Pi Q\cdot\nabla_y\beta(x/k)\alpha(t)dtdx\right|\le k^{-n-1}\|Q\|_\infty\int_\Pi |\nabla_y\beta(x/k)|\alpha(t)dtdx= \\ k^{-1}\|Q\|_\infty\int_\Pi |\nabla_y\beta(y)|\alpha(t)dtdy=c/k, \quad c=\const.
\end{align*}
Therefore, in the limit as $k\to\infty$ the second integral in (\ref{20}) disappears while (see for example \cite[Lemma~2.1]{PaJDE})
$$
k^{-n}\int_\Pi(u_+-u_-)\alpha'(t)\beta(x/k)dtdx\to\int_{\R_+\times\T^n}(u_+-u_-)(t,x)\alpha'(t)dtdx,
$$
where $\T^n=[0,1)^n$ is the periodicity cell (or, the same, the thorus $\R^n/\Z^n$). Hence, after the passage to the limit we get
$$
\int_{\R_+\times\T^n}(u_+-u_-)(t,x)\alpha'(t)dtdx=0 \quad \forall \alpha(t)\in C_0^1(\R_+).
$$
This identity means that
$$
\frac{d}{dt}\int_{\T^n}(u_+(t,x)-u_-(t,x))dx=0 \ \mbox{ in } \D'(\R_+)
$$
and implies, with the help of initial condition (\ref{ini1}), that for a.e. $t>0$
$$
\int_{\T^n}(u_+(t,x)-u_-(t,x))dx=\int_{\T^n}(u_0(x)-u_0(x))dx=0.
$$
Since $u_+\ge u_-$, we conclude that $u_+=u_-$ a.e. on $\Pi$.
\end{proof}
Since any e.s. of (\ref{1}), (\ref{ini}) is situated between $u_-$ and $u_+$, we deduce the following

\begin{corollary}\label{cor4}
An e.s. of (\ref{1}), (\ref{ini}) is unique and coincides with $u_+$.
\end{corollary}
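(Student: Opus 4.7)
The plan is to reduce the claim immediately to Theorem~\ref{th3}. The key observation is that the construction in Section~4 shows more than mere existence of $u_\pm$: it shows that any e.s.\ $u$ of (\ref{1}), (\ref{ini}) automatically satisfies the sandwich
\[
u_-(t,x) \le u(t,x) \le u_+(t,x) \quad \text{a.e.\ on } \Pi.
\]
Indeed, in the argument establishing that $u_+$ is the \emph{largest} e.s., one compares an arbitrary e.s.\ $u$ with the approximating sequence $u_r$ whose initial data $u_{0r}$ dominate $u_0$ and lie in $(d_r+L^1(\R^n))\cap L^\infty(\R^n)$; the excess set $\{u\ge u_r\}$ has finite measure in every strip $\Pi_T$ by Chebyshev's inequality together with Corollary~\ref{cor2}, so Proposition~\ref{pro2} applies and gives $u\le u_r$, whence $u\le u_+$ in the limit $r\to\infty$. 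The symmetric argument, applied through Remark~\ref{rem1} to the problem (\ref{1-}) with initial datum $-u_0$, yields $u\ge u_-$.

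Given this sandwich, the corollary is immediate: Theorem~\ref{th3} asserts $u_+=u_-$ a.e.\ on $\Pi$ in the periodic setting, hence $u=u_+=u_-$ a.e.\ on $\Pi$ for every e.s.\ $u$. There is no real obstacle, since all the analytic work (periodicity of $u_\pm$, the conservation identity for $u_+-u_-$ integrated over the periodicity cell, and the sandwich property) has already been carried out.
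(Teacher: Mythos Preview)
Your argument is correct and is exactly the paper's own: the sandwich $u_-\le u\le u_+$ (established in Section~4 via Proposition~\ref{pro2} and Remark~\ref{rem1}) together with Theorem~\ref{th3} forces $u=u_+$ in the periodic case. There is nothing to add.
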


\section{Weak completeness of e.s.}

In the one-dimensional case $n=1$
we consider a bounded sequence $u_r=u_r(t,x)\in L^\infty(\Pi)$, $r\in\N$, of e.s. of equation (\ref{1}) (without a prescribed initial condition), which are periodic with respect to the spatial variable, $u_r(t,x+1)=u_r(t,x)$ a.e. in $\Pi$.
Without loss of generality we can suppose that this sequence converges weakly-$*$ in $L^\infty(\Pi)$ to a function $u=u(t,x)$. It is clear that this function is $x$-periodic. The main result of this section is the following

\begin{theorem}\label{th4}
The limit function $u(t,x)$ is an e.s. of problem (\ref{1}), (\ref{ini}) with some periodic initial function $u_0(x)$.
\end{theorem}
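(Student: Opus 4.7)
The plan is to combine the measure-valued framework from Section~1 with a compensated-compactness argument tailored to the 1D periodic setting, together with an initial trace extraction, so as to pass to the limit in the entropy inequality and then reduce the limiting measure-valued solution to an ordinary one. First I would apply Theorem~\ref{thT} to extract a subsequence of $u_r$ (not relabeled) and a measure-valued function $\mu_{t,x}\in\MV(\Pi)$ such that $p(u_r)\to\langle\mu_{t,x},p(v)\rangle$ weak-$*$ in $L^\infty(\Pi)$ for every $p\in C(\R)$; in particular the weak-$*$ limit $u$ coincides with $\langle\mu_{t,x},v\rangle$. By Definition~\ref{def1}, each $u_r$ is associated with a measure-valued e.s.\ $\nu^r_{t,x}$ of (\ref{1'}) satisfying $b^*\nu^r_{t,x}=\delta(u-u_r(t,x))$, and Theorem~\ref{thTa} supplies a further subsequence with $\nu^r_{t,x}\to\nu_{t,x}\in\MV(\Pi)$ weakly. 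Passing to the limit in $\int q(b(v))\,d\nu^r_{t,x}(v)=q(u_r(t,x))$ for a countable dense family of $q\in C(\R)$ yields $b^*\nu_{t,x}=\mu_{t,x}$ for a.e.\ $(t,x)$, and passing to the limit in (\ref{entim}) without the boundary term shows that $\nu_{t,x}$ satisfies the entropy condition (\ref{ent}) in $\D'(\Pi)$.

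The heart of the proof, and the one place where the 1D and periodic hypotheses are genuinely used, is to show that $\mu_{t,x}$ is a Dirac mass, equivalently $u_r\to u$ in $L^1_{loc}(\Pi)$. I would adapt the compensated compactness method of \cite{PanSIMA} to the present multi-valued setting: for each pair of Kruzhkov entropy-flux pairs, Murat's lemma shows that the distributions $|b(v_r)-b(k)|_t+[\sign(v_r-k)(g(v_r)-g(k))]_x$ (viewed against $\nu^r_{t,x}$) lie in a compact subset of $H^{-1}_{loc}$, so Tartar's div-curl lemma applies and yields a commutation identity for $\nu_{t,x}$. Combined with the constraint $b^*\nu_{t,x}=\mu_{t,x}$ and the mean-value argument in the spirit of Theorem~\ref{th3} (integration over the periodicity cell $\T=[0,1)$, where spatial periodicity causes divergence-in-$x$ terms to vanish), this commutation identity rules out non-trivial Young measures along the continuous flux $(b,g)$, forcing $\nu_{t,x}=\delta(v-v(t,x))$ for some $v(t,x)$ and consequently $\mu_{t,x}=\delta(u-u(t,x))$. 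This is the step I expect to be the main obstacle, because the degeneracy intervals $[a_k,b_k]$ on which $b$ is constant require additional care when invoking genuine-nonlinearity-type arguments on the $v$-side.

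Once strong convergence is established, $u_r\to u$ in $L^1_{loc}(\Pi)$, $b^*\nu_{t,x}=\delta(u-u(t,x))$, and $\nu_{t,x}$ is a measure-valued e.s.\ of (\ref{1'}) in $\Pi$. To produce the initial datum I would apply the strong trace theorem for 1D scalar conservation laws to the equation $b(v)_t+g(v)_x=0$ satisfied in $\D'(\Pi)$: after modification on a null set, $t\mapsto b(v)(t,\cdot)=u(t,\cdot)$ belongs to $C([0,\infty),L^1_{loc}(\R))$, so $u_0(x)\doteq\esslim_{t\to 0+}u(t,\cdot)$ exists in $L^1_{loc}(\R)$, and periodicity of each $u_r(t,\cdot)$ is inherited by $u$ and hence by $u_0$. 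A standard density argument upgrades the entropy condition (\ref{entim}) on $\Pi$ to (\ref{entim1}) on $\bar\Pi$ with this $u_0$, so by Definition~\ref{def1} $u$ is an e.s.\ of (\ref{1}), (\ref{ini}) with the periodic initial function $u_0$, as required.
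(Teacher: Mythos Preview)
Your plan contains a genuine gap at its central step: the claim that compensated compactness forces $\nu_{t,x}$ (or $\mu_{t,x}=b^*\nu_{t,x}$) to be a Dirac mass is false in general. Tartar's commutation identity only yields that conclusion under a genuine-nonlinearity hypothesis on the flux; the paper records exactly this as Corollary~\ref{cor6}, which assumes that $\varphi$ is not affine on any nondegenerate interval. When $\varphi$ is linear on some interval $I$, a sequence like $u_r(t,x)=a+c\sin(2\pi r x)$ with range in $I$ consists of stationary periodic e.s.\ whose weak-$*$ limit is the constant $a$, yet $u_r\not\to a$ strongly; the limiting Young measure is non-Dirac and no amount of periodicity or ``mean-value'' reasoning over $\T$ will collapse it. Your acknowledgement that the constancy intervals of $b$ ``require additional care'' understates the issue: the obstruction is not the $v$-side degeneracy of $b$ but the possible affine dependence of $g$ on $b$, and on such stretches the div--curl identity is vacuous.

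The paper's proof takes a different route precisely to avoid this obstacle. Compensated compactness is used only to obtain the structural conclusion of Proposition~\ref{pro4}: for a.e.\ $(t,x)$ some nontrivial combination $\xi_0 b(v)+\xi_1 g(v)$ is constant on $\Co\supp\nu_{t,x}$. One then \emph{does not} try to show $\mu_{t,x}$ is Dirac. Instead one picks $u_0$ as the weak limit of $u(t_m,\cdot)$ along $t_m\to 0$, lets $\tilde u$ be the (unique, by Corollary~\ref{cor4}) periodic e.s.\ with datum $u_0$, introduces the Lipschitz potential $P$ with $P_x=u-\tilde u$, and proves the algebraic identity $B\,P_t+G\,P_x=0$ a.e.\ (relation~(\ref{24})/(\ref{25})), whose verification is a four-case analysis that uses Proposition~\ref{pro4} exactly where the supports overlap. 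Testing the doubled entropy inequality against $\rho(P)f$ and averaging over the period then gives $\frac{d}{dt}\int_0^1 B\,\rho(P)\,dx\le 0$, and since $P(0,\cdot)=0$ this forces $B\rho(P)=0$, hence $u=\tilde u$. The point is that the weak limit is identified with an \emph{already known} e.s., so no regularity of the Young measure is needed. If you want to salvage your outline, you would have to replace the ``$\mu_{t,x}$ is Dirac'' step by this comparison-with-$\tilde u$ argument; the strong-trace invocation at the end then becomes unnecessary, since $\tilde u$ carries the initial condition by construction.
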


Applying this theorem to the constant sequence $u_r=u$, we obtain that any e.s. of equation (\ref{1}) admits a strong trace $u_0$ at the initial line $t=0$ in the sense of relation (\ref{ini1}). In the case of continuous flux function Theorem~\ref{th4} was proved in \cite{PanSIMA} and was even extended in \cite{PanSIMA1} to the case of a degenerate parabolic equation $u_t+\varphi(u)_x=A(u)_{xx}$. We underline that the statement of Theorem~\ref{th4} is purely one-dimensional, in the case $n>1$ it is no longer valid, see \cite[Remark~3]{PanSIMA}. To prove Theorem~\ref{th4}, we will follow the scheme of paper \cite{PanSIMA}. First of all we will modify the technical lemma \cite[Lemma~2.3]{PanSIMA}.

\begin{lemma}\label{lem3}
Let $\nu$ be a Borel measure with compact support in $\R$ and $p(v)\in C(\R)$ be such a function that
\begin{equation}\label{21}
\int\sign^+(v-k)(p(v)-p(k))d\nu(v)=0 \quad \forall k\in [a,b],
\end{equation}
where $a<b=\max\supp\nu$. Then $p(v)\equiv\const$ on $[a,b]$.
\end{lemma}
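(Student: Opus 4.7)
The plan is to argue by contradiction: assume $p$ is non-constant on $[a,b]$, so $m:=\min_{[a,b]}p<\max_{[a,b]}p=:M$, and show that then $p(b)$ would have to equal both $M$ and $m$. The hypothesis (\ref{21}) can be rewritten as
\[
\int_{(k,+\infty)}p(v)\,d\nu(v)=p(k)\,\nu((k,+\infty)) \quad\forall k\in[a,b],
\]
and the idea is to apply this identity at the location of the extremum.

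First I would prove the auxiliary claim that if $k^{*}\in[a,b]$ realizes the maximum, $p(k^{*})=M$, then necessarily $p(b)=M$. If $k^{*}=b$, there is nothing to do; otherwise $k^{*}<b$ and (\ref{21}) at $k=k^{*}$ becomes
\[
\int_{(k^{*},b]}\bigl(p(v)-M\bigr)\,d\nu(v)=0,
\]
since $\supp\nu\subset(-\infty,b]$. On $(k^{*},b]\subset[a,b]$ the integrand is $\le0$, so $p(v)=M$ for $\nu$-a.e.\ $v\in(k^{*},b]$. Because $b=\max\supp\nu$ and $k^{*}<b$, the measure $\nu((k^{*},b])$ is strictly positive, and two cases arise: if $\nu(\{b\})>0$ then $p(b)=M$ at once; otherwise $\nu((b-\varepsilon,b))>0$ for every $\varepsilon>0$, so one can extract a sequence $v_{j}\to b$ with $p(v_{j})=M$, and continuity of $p$ yields $p(b)=M$.

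The same argument applied to $-p$ (which satisfies the analogue of (\ref{21}) since the sign flip pulls out of the integrand) shows that if $p$ attains its minimum at some $k_{*}\in[a,b]$, then $p(b)=m$. Combining both conclusions gives $M=p(b)=m$, contradicting $m<M$. Hence $p\equiv\const$ on $[a,b]$.

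The only delicate point is the last step of the auxiliary claim: passing from "$p=M$ $\nu$-a.e.\ on $(k^{*},b]$" to the pointwise value $p(b)=M$. This requires splitting on whether $\nu$ has an atom at $b$, and in the atomless case exploiting $b\in\supp\nu$ together with continuity of $p$. Everything else is essentially the elementary observation that an integral of a $\le0$ function against a positive measure can vanish only if the integrand vanishes almost everywhere.
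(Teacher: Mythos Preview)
Your proof is correct and follows essentially the same approach as the paper: plug the extremum location into (\ref{21}), use the sign of the integrand together with $b=\max\supp\nu$ to force $p(b)$ to equal both the maximum and the minimum of $p$ on $[a,b]$. The only cosmetic difference is that the paper argues directly that if $p(b)$ differed from the extremal value then, by continuity, the integrand would be strictly of one sign on some interval $(b-\delta,b]$ of positive $\nu$-measure, whereas you first deduce that the integrand vanishes $\nu$-a.e.\ and then pass to the value at $b$ via a sequence; the content is the same.
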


\begin{proof}
We choose values $k_1,k_2\in [a,b]$ such that $p(k_1)=\min\limits_{[a,b]} p(v)$, $p(k_2)=\max\limits_{[a,b]} p(v)$. If $p(k_1)<p(b)$ then $k_1<b$. Taking $k=k_1$, we find that the integrand in (\ref{21}) is not negative and strictly positive in an interval $(b-\delta,b]$, $\delta>0$. Since $b=\max\supp\nu$ then $\nu((b-\delta,b])>0$ and the integral in (\ref{21}) is strictly positive, which contradicts to this condition. Hence, $p(k_1)=p(b)$. Similarly, assuming that $p(k_2)>p(b)$ and taking $k=k_2$ in (\ref{21}), we come to a contradiction. Thus, $p(k_2)=p(k_1)=p(b)$, that is,
$\min\limits_{[a,b]} p(v)=\max\limits_{[a,b]} p(v)$. We conclude that $p(v)\equiv\const$ on $[a,b]$.
\end{proof}

\begin{corollary}\label{cor5}
Suppose that
\begin{equation}\label{21a}
\int\sign^-(v-k)(p(v)-p(k))d\nu(v)=0 \quad \forall k\in [a,b],
\end{equation}
where $a=\min\supp\nu<b$. Then $p(v)\equiv\const$ on $[a,b]$.
\end{corollary}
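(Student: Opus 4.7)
The natural approach is to reduce Corollary \ref{cor5} to Lemma \ref{lem3} by means of the reflection $v\mapsto -v$. I would introduce the push-forward $\tilde\nu$ of $\nu$ under this map and the reflected continuous function $\tilde p(w)=p(-w)$. Since $\supp\tilde\nu=-\supp\nu$, we have $\max\supp\tilde\nu=-\min\supp\nu=-a$, while $-b<-a$ by hypothesis.

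Using the identity $\sign^- z=-\sign^+(-z)$ recorded at the start of Section 2 and the substitutions $w=-v$, $\tilde k=-k$, the integrand in (\ref{21a}) transforms as
\begin{equation*}
\sign^-(v-k)(p(v)-p(k))=-\sign^+(w-\tilde k)(\tilde p(w)-\tilde p(\tilde k)),
\end{equation*}
so the vanishing condition (\ref{21a}) for every $k\in[a,b]$ is equivalent to
\begin{equation*}
\int\sign^+(w-\tilde k)(\tilde p(w)-\tilde p(\tilde k))\,d\tilde\nu(w)=0\quad\forall\tilde k\in[-b,-a].
\end{equation*}
This is precisely the hypothesis of Lemma \ref{lem3} applied to $\tilde\nu$ and $\tilde p$ on the interval $[-b,-a]$, whose right endpoint $-a$ equals $\max\supp\tilde\nu$. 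Lemma \ref{lem3} then yields $\tilde p\equiv\const$ on $[-b,-a]$, which, translating back, means $p\equiv\const$ on $[a,b]$.

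I do not anticipate any serious obstacle here: the only item requiring care is the sign bookkeeping in the change of variables (the extra minus sign is immaterial since the integral vanishes) and the check that the relevant endpoint of $[-b,-a]$ is indeed the one demanded by Lemma \ref{lem3}. Alternatively, one could mirror the direct argument used in the proof of Lemma \ref{lem3}: pick $k_1,k_2\in[a,b]$ realizing $\min_{[a,b]}p$ and $\max_{[a,b]}p$; if either differs from $p(a)$, then $k_i>a$ and the integrand in (\ref{21a}) has a definite sign on a left neighborhood of $a$ which carries positive $\nu$-mass (because $a=\min\supp\nu$), producing a contradiction. Either route is routine, but the reflection reduction is shorter and exploits Lemma \ref{lem3} as a black box.
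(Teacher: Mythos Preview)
Your proposal is correct and matches the paper's own proof essentially line for line: the paper also introduces the push-forward $\tilde\nu=l^*\nu$ under $l(v)=-v$, uses the identity $\sign^- z=-\sign^+(-z)$ to rewrite (\ref{21a}) for $\tilde p(w)=p(-w)$ on $[-b,-a]$, checks that $-a=\max\supp\tilde\nu$, and then invokes Lemma~\ref{lem3}. The alternative direct argument you sketch is also valid but is not the route taken in the paper.
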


\begin{proof}
After the change $v\to -v$, $k\to -k$, requirement (\ref{21a}) reduces to the following one: $\forall k\in [-b,-a]$
$$
\int\sign^+(v-k)(p(-v)-p(-k))d\tilde\nu(v)=
-\int\sign^-(-v+k)(p(-v)-p(-k))d\tilde\nu(v)=0,
$$
where $\tilde\nu$ is the push-forward measure $l^*\nu$ under the map $l(v)=-v$. Notice that
$-a=\max\supp\tilde\nu$. By Theorem~\ref{th4} we conclude that $p(-v)\equiv\const$ on $[-b,-a]$, which is equivalent to the desired statement.
\end{proof}

Let $\nu_{t,x}^r$, $r\in\N$, be a measure valued e.s. of (\ref{1'}) corresponding to the e.s. $u_r$. Then the sequence $\nu_{t,x}^r$, $r\in\N$, is bounded and, by Theorem~\ref{thTa}, passing to a subsequence if necessary, we can suppose that this sequence converges weakly as $r\to\infty$ to a bounded measure valued function
$\nu_{t,x}\in\MV(\Pi)$. Since $b^*\nu_{t,x}^r(u)=\delta(u-u_r(t,x))$ then for each $p(u)\in C(\R)$
$$
p(u_r)=\int p(b(v))d\nu_{t,x}^r(v)\mathop{\to}_{r\to\infty}\int p(b(v))d\nu_{t,x}(v).
$$
This relation implies that the push-forward measure $\bar\nu_{t,x}=b^*(\nu_{t,x})(u)$ is the limit measure valued function for the sequence $u_r$ (in the sense of Theorem~\ref{thT}). In particular, this measure valued function is space-periodic, $\bar\nu_{t,x+1}=\bar\nu_{t,x}$ for a.e. $(t,x)\in\Pi$. Notice that, in correspondence with (\ref{pr2}), the weak limit function $u(t,x)=\int ud\bar\nu_{t,x}(u)=\int b(v)d\nu_{t,x}(v)$.

Passing to the limit as $r\to\infty$ in the entropy relation
$$
\int_\Pi \left[\int|b(v)-b(k)|d\nu_{t,x}^r(v)f_t+\int\sign(v-k)(g(v)-g(k))d\nu_{t,x}^r(v)f_x\right]dtdx\ge 0,
$$
$k\in\R$, $f=f(t,x)\in C_0^1(\Pi)$, $f\ge 0$, we obtain the relation
$$
\int_\Pi \left[\int|b(v)-b(k)|d\nu_{t,x}(v)f_t+\int\sign(v-k)(g(v)-g(k))d\nu_{t,x}(v)f_x\right]dtdx\ge 0,
$$
which shows that $\nu_{t,x}$ is an e.s. of (\ref{1'}).

Using compensated compactness arguments, we establish the formulated below one more important property of the limit measure valued e.s. $\nu_{t,x}$. We consider even the more general case of equations
\begin{equation}\label{g1}
\varphi_0(v)_t+\varphi_1(v)_x=0,
\end{equation}
where $\varphi_0(v),\varphi_1(v)$ are arbitrary continuous functions.
A measure valued e.s. $\nu_{t,x}\in\MV(\Pi)$ of this equation is characterized by the usual Kruzhkov entropy relation: for all $k\in\R$
\begin{equation}\label{gen}
\frac{\partial}{\partial t}\int\sign(v-k)(\varphi_0(v)-\varphi_0(k))d\nu_{t,x}(v)+\frac{\partial}{\partial x}\int\sign(v-k)(\varphi_1(v)-\varphi_1(k))d\nu_{t,x}(v)\le 0
\end{equation}
in $\D'(\Pi)$. Taking $k=\pm R$, $R\ge\|\nu_{t,x}\|_\infty$, we derive the identity
\begin{align*}
\frac{\partial}{\partial t}\int (\varphi_0(v)-\varphi_0(k))d\nu_{t,x}(v)+\frac{\partial}{\partial x}\int (\varphi_1(v)-\varphi_1(k))d\nu_{t,x}(v)= \\
\frac{\partial}{\partial t}\int \varphi_0(v)d\nu_{t,x}(v)+\frac{\partial}{\partial x}\int\varphi_1(v)d\nu_{t,x}(v)=0 \ \mbox{ in } \D'(\Pi)
\end{align*}
for all $k\in\R$. Putting this identity multiplied by $\pm 1$ together with (\ref{gen}), we get another (equivalent) form of entropy relation (\ref{gen})
\begin{equation}\label{gen1}
\frac{\partial}{\partial t}\int\psi_{0k}^\pm(v)d\nu_{t,x}(v)+\frac{\partial}{\partial x}\int\psi_{1k}^\pm(v)d\nu_{t,x}(v)\le 0 \ \mbox{ in } \D'(\Pi),
\end{equation}
where
$$
\psi_{ik}^\pm(v)=\sign^\pm(v-k)(\varphi_i(v)-\varphi_i(k)), \quad i=0,1, \ k\in\R.
$$
Denote by $\Co A$ the convex hull of a set $A\subset\R^n$. In the case when $A$ is a compact subset of $\R$,
$\Co A=[\min A,\max A]$.

\begin{proposition}\label{pro4} Let $\nu_{t,x}^r$, $r\in\N$, be a sequence of measure valued e.s. of equation (\ref{g1}) such that for a.e. $(t,x)\in\Pi$ and all $r\in\N$ the function $\varphi_0(v)$ is constant on $\Co\supp\nu_{t,x}^r$ (in particular, this condition is always satisfied when the measure valued functions $\nu_{t,x}^r$ are regular). Suppose that this sequence converges weakly to a measure valued function $\nu_{t,x}$ (in the sense of relation (\ref{pr2a})). Then
for a.e. $(t,x)\in\Pi$ there exists a nonzero vector $(\xi_0,\xi_1)\in\R^2$ such that $\xi_0\varphi_0(v)+\xi_1\varphi_1(v)=\const$ on  $\Co\supp\nu_{t,x}$.
\end{proposition}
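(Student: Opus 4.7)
The plan is to combine the classical compensated-compactness machinery — Murat's lemma together with Tartar's div--curl lemma in the measure-valued framework — with the reduction provided by Lemma~\ref{lem3} and Corollary~\ref{cor5}. The overall target is to show that, for a.e.\ $(t,x)\in\Pi$, the $\R^2$-vectors $\bigl(\langle\nu_{t,x},\psi^\epsilon_{0k}\rangle,\langle\nu_{t,x},\psi^\epsilon_{1k}\rangle\bigr)$, $k\in\R$, $\epsilon\in\{+,-\}$, all lie in a single one-dimensional subspace; the desired $(\xi_0,\xi_1)$ will then be any nonzero vector orthogonal to that subspace, and the conclusion will follow by applying Lemma~\ref{lem3} to $p(v)=\xi_0\varphi_0(v)+\xi_1\varphi_1(v)$.

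First, from the entropy relation (\ref{gen1}) the distributions $\partial_t\langle\nu^r,\psi^\epsilon_{0k}\rangle+\partial_x\langle\nu^r,\psi^\epsilon_{1k}\rangle\le 0$ are non-positive Radon measures of uniformly bounded local total variation, while the corresponding vector fields are uniformly bounded in $L^\infty$; Murat's lemma then yields precompactness of these divergences in $H^{-1}_{\mathrm{loc}}(\Pi)$. Applying Tartar's div--curl lemma to $U^r=(\langle\nu^r,\psi^{\epsilon_1}_{0k_1}\rangle,\langle\nu^r,\psi^{\epsilon_1}_{1k_1}\rangle)$ (compact divergence) and $V^r=(-\langle\nu^r,\psi^{\epsilon_2}_{1k_2}\rangle,\langle\nu^r,\psi^{\epsilon_2}_{0k_2}\rangle)$ (compact curl) yields Tartar's commutation identity: the weak-$*$ limit in $L^\infty(\Pi)$ of $\langle\nu^r,\psi^{\epsilon_1}_{1k_1}\rangle\langle\nu^r,\psi^{\epsilon_2}_{0k_2}\rangle-\langle\nu^r,\psi^{\epsilon_1}_{0k_1}\rangle\langle\nu^r,\psi^{\epsilon_2}_{1k_2}\rangle$ equals the analogous expression with $\nu$ in place of $\nu^r$.

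The heart of the proof is to show that this weak-$*$ limit actually vanishes. Here one exploits the hypothesis $\varphi_0\equiv c^r(t,x)$ on $\Co\supp\nu^r_{t,x}$, which gives the key factorization $\langle\nu^r,\psi^\epsilon_{0k}\rangle=(c^r-\varphi_0(k))\langle\nu^r,\sign^\epsilon(v-k)\rangle$. Substituting and combining with a companion div--curl identity obtained by pairing the (trivially divergence-free) conservation pair $\bigl(\langle\nu^r,\varphi_0\rangle,\langle\nu^r,\varphi_1\rangle\bigr)$ with the entropy pairs, one should eliminate the contribution of the product $c^r\cdot(\text{weakly convergent quantity})$ and deduce that, for a.e.\ $(t,x)$ and every choice of parameters,
\begin{equation*}
\langle\nu_{t,x},\psi^{\epsilon_1}_{0k_1}\rangle\langle\nu_{t,x},\psi^{\epsilon_2}_{1k_2}\rangle=\langle\nu_{t,x},\psi^{\epsilon_1}_{1k_1}\rangle\langle\nu_{t,x},\psi^{\epsilon_2}_{0k_2}\rangle.
\end{equation*}
I expect this step — carefully reconciling the factorization with the weak (but generally not strong) convergence of $c^r$ — to be the main technical obstacle.

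Once this determinant identity is established, fix $(t,x)$ in the full-measure set where it holds and let $[a,b]=\Co\supp\nu_{t,x}$. The vectors in question all lie on a common line through the origin in $\R^2$; choose $(\xi_0(t,x),\xi_1(t,x))\in\R^2\setminus\{0\}$ perpendicular to this line (and simply take $(1,0)$ in the degenerate case where all the vectors vanish identically). Then $p(v):=\xi_0\varphi_0(v)+\xi_1\varphi_1(v)\in C(\R)$ satisfies
\begin{equation*}
\int\sign^\pm(v-k)\bigl(p(v)-p(k)\bigr)\,d\nu_{t,x}(v)=\xi_0\langle\nu_{t,x},\psi^\pm_{0k}\rangle+\xi_1\langle\nu_{t,x},\psi^\pm_{1k}\rangle=0
\end{equation*}
for every $k\in\R$. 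Lemma~\ref{lem3} (with sign $+$ and $b=\max\supp\nu_{t,x}$) together with Corollary~\ref{cor5} (with sign $-$ and $a=\min\supp\nu_{t,x}$) then force $p\equiv\const$ on $[a,b]=\Co\supp\nu_{t,x}$, which is the required conclusion.
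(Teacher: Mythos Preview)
Your overall architecture is right, but the step you flag as the ``main technical obstacle'' is a genuine gap, and the route you propose for closing it does not work as stated. The factorization $\langle\nu^r,\psi^\epsilon_{0k}\rangle=(c^r-\varphi_0(k))\langle\nu^r,\sign^\epsilon(\cdot-k)\rangle$ leads, after substitution into the div--curl product, to terms of the form $c^r\cdot(\text{product of two weakly convergent factors})$; pairing the conservation pair with a single entropy pair gives information only about $c^r\cdot(\text{one weakly convergent factor})$, which is not enough to control a triple product. No amount of algebraic juggling with these two identities eliminates the bad term, so your determinant identity for \emph{all} $(k_1,\epsilon_1),(k_2,\epsilon_2)$ is not established. (A posteriori it is true for $k_1,k_2\in\Co\supp\nu_{t,x}$, because the conclusion of the proposition forces $\psi_{1k}^\epsilon=c\,\psi_{0k}^\epsilon$ on $\supp\nu_{t,x}$; but of course that cannot be used in the proof.)

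The paper avoids this difficulty by a cleaner use of the hypothesis. Since $\varphi_0$ is constant on $\Co\supp\nu^r_{t,x}$, the function $\psi_{0k}^\pm(v)$ is itself constant there --- equal to its own average $P^\pm_{kr}=\langle\nu^r,\psi_{0k}^\pm\rangle$. Hence the $2\times2$ determinant at level $r$ collapses to a \emph{single} integral,
\[
P^+_{kr}Q^-_{lr}-Q^+_{kr}P^-_{lr}=\bigl\langle\nu^r,\ \psi_{0k}^+\psi_{1l}^- - \psi_{1k}^+\psi_{0l}^-\bigr\rangle,
\]
which passes directly to the weak limit and yields the Tartar-type relation (\ref{g2}). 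The decisive observation is then to specialize to $\epsilon_1=+$, $\epsilon_2=-$ and $l\le k$: since $\psi_{ik}^+(v)=0$ for $v\le k$ and $\psi_{il}^-(v)=0$ for $v\ge l$, the integrand vanishes identically, giving the determinant identity (\ref{g3}) \emph{only in the range $l\le k$}. From that restricted identity one cannot immediately say ``all the vectors lie on one line''; the paper instead locates specific $k_0\in[b_2,b]$ and $l_0\in[a,a_2]$ at which the $\psi_0$-integrals are nonzero (using that $\varphi_0$ is not constant near the endpoints of $\Co\supp\nu_{t,x}$), reads off a common slope $c$, and then applies Lemma~\ref{lem3} and Corollary~\ref{cor5} to $p=\varphi_1-c\varphi_0$ on overlapping subintervals to conclude. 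So both your use of the hypothesis and your endgame need to be replaced by this sharper argument.
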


\begin{proof}
Since $\nu_{t,x}^r$ are measure valued e.s. of (\ref{g1}) then in view of (\ref{gen1}) for all $k\in\R$ the distributions
$$
\alpha_{kr}^\pm\doteq\frac{\partial}{\partial t}\int\psi_{0k}^\pm(v)d\nu_{t,x}^r(v)+\frac{\partial}{\partial x}\int\psi_{1k}^\pm(v)d\nu_{t,x}^r(v)\le 0 \ \mbox{ in } \D'(\Pi).
$$
By the known representation of nonnegative distributions $\alpha_{kr}^\pm=-\mu_{kr}$, where $\mu_{kr}$ are nonnegative locally finite measures on $\Pi$. We use also that $\alpha_{kr}^+=\alpha_{kr}^-$ because
$$
\alpha_{kr}^+-\alpha_{kr}^-=\frac{\partial}{\partial t}\int \varphi_0(v)d\nu_{t,x}^r(v)+\frac{\partial}{\partial x}\int\varphi_1(v)d\nu_{t,x}^r(v)=0 \ \mbox{ in } \D'(\Pi).
$$
It is clear that $\mu_{kr}=0$ for $|k|>M=\sup\limits_r\|\nu_{t,x}^r\|_\infty$ while for $|k|\le M$
\begin{align*}
<\mu_{kr},f>=\int_{\Pi}\left[\int\psi_{0k}^\pm(v)d\nu_{t,x}^r(v)f_t+\int\psi_{1k}^\pm(v)d\nu_{t,x}^r(v)f_x\right]dtdx\le \\
2\max_{|v|\le M}(|\varphi_0(v)|+|\varphi_1(v)|)\int_{\Pi}\max(|f_t|,|f_x|)dtdx\doteq C_f
\end{align*}
for each $f=f(t,x)\in C_0^1(\Pi)$, $f\ge 0$. Since the constants $C_f$ do not depend on $r$, the sequences of nonnegative measures $\mu_{kr}$, $r\in\N$, are bounded in the space $\mathrm{M}_{loc}(\Pi)$ of locally finite measures in $\Pi$ endowed with the standard locally convex topology.
By the Murat interpolation lemma \cite{Mu1} the sequences of distributions $\alpha_{kr}^\pm$, $r\in\N$ are pre-compact in the Sobolev space $H_{loc}^{-1}(\Pi)$. Recall that this space consists of distributions $l$ on $\Pi$ such that for each $f\in C_0^\infty(\Pi)$ the distribution $fl$ lies in the space $H^{-1}(\R^2)$, which is dual to the Sobolev space $H^1(\R^2)$.
The topology of $H_{loc}^{-1}(\Pi)$ is generated by seminorms $\|lf\|_{H^{-1}}$. We fix $k,l\in\R$ and denote
\begin{align*}
P_{kr}^+=\int\psi_{0k}^+(v)d\nu_{t,x}^r(v), \quad Q_{kr}^+=\int\psi_{1k}^+(v)d\nu_{t,x}^r(v), \\
P_{lr}^-=\int\psi_{0l}^-(v)d\nu_{t,x}^r(v), \quad Q_{lr}^-=\int\psi_{1l}^-(v)d\nu_{t,x}^r(v).
\end{align*}
As we already demonstrated, the sequences
$$
\alpha_{kr}^+=\frac{\partial}{\partial t} P_{kr}^++\frac{\partial}{\partial x} Q_{kr}^+, \quad
\alpha_{lr}^-=\frac{\partial}{\partial t} P_{lr}^-+\frac{\partial}{\partial x} Q_{lr}^-
$$
are precompact in $H_{loc}^{-1}(\Pi)$. By the compensated compactness theory (see \cite{Mu,Ta}), the quadratic functional
$\Phi(\lambda)=\lambda_1\lambda_4-\lambda_2\lambda_3$, $\lambda=(\lambda_1,\lambda_2,\lambda_3,\lambda_4)\in\R^4$, is weakly continuous on the sequence $(P_{kr}^+,Q_{kr}^+,P_{lr}^-,Q_{lr}^-)$.
By the definition of the measure valued limit function $\nu_{t,x}$ we find that as $r\to\infty$
\begin{align*}
P_{kr}^+\rightharpoonup P_k^+\doteq\int\psi_{0k}^+(v)d\nu_{t,x}(v), \quad Q_{kr}^+\rightharpoonup Q_k^+\doteq\int\psi_{1k}^+(v)d\nu_{t,x}(v), \\
P_{lr}^-\rightharpoonup P_l^-\doteq\int\psi_{0l}^-(v)d\nu_{t,x}(v), \quad Q_{lr}^+\rightharpoonup Q_l^-\doteq\int\psi_{1l}^-(v)d\nu_{t,x}(v)
\end{align*}
weakly-$*$ in $L^\infty(\Pi)$. By our assumption the function $\varphi_0(v)$ is constant on the segment $\Co\supp\nu_{t,x}^r$ for all $r\in\N$. Therefore,
$\psi_{0k}^+(v)\equiv P_{kr}^+$, $\psi_{0l}^-(v)\equiv P_{lr}^-$ on this segment. It follows from this observation that
\begin{align*}
P_{kr}^+Q_{lr}^--Q_{kr}^+P_{lr}^-=\int (\psi_{0k}^+(v)\psi_{1l}^-(v)-\psi_{1k}^+(v)\psi_{0l}^-(v))d\nu_{t,x}^r(v)
\mathop{\rightharpoonup}_{r\to\infty} \\ \int (\psi_{0k}^+(v)\psi_{1l}^-(v)-\psi_{1k}^+(v)\psi_{0l}^-(v))d\nu_{t,x}(v)
\ \mbox{ weakly-$*$ in } L^\infty(\Pi).
\end{align*}
On the other hand, this limit equals $P_k^+Q_l^--Q_k^+P_l^-$ in view of the mentioned above weak continuity of the functional $\Phi(\lambda)$. Hence, we arrive at the relation
\begin{align}\label{g2}
\int (\psi_{0k}^+(v)\psi_{1l}^-(v)-\psi_{1k}^+(v)\psi_{0l}^-(v))d\nu_{t,x}(v)=
\int\psi_{0k}^+(v)d\nu_{t,x}(v)\int\psi_{1l}^-(v)d\nu_{t,x}(v)-\nonumber\\ \int\psi_{1k}^+(v)d\nu_{t,x}(v)\int\psi_{0l}^-(v)d\nu_{t,x}(v).
\end{align}
Notice that $\psi_{ik}^+(v)=0$ for $v\le k$ while $\psi_{il}^-(v)=0$ for $v\ge l$, where $i=0,1$. Therefore, the integrand in the left hand side of (\ref{g2}) is identically zero whenever $l\le k$. For all such pairs $(k,l)$ we have
\begin{equation}\label{g3}
\int\psi_{0k}^+(v)d\nu_{t,x}(v)\int\psi_{1l}^-(v)d\nu_{t,x}(v)= \int\psi_{1k}^+(v)d\nu_{t,x}(v)\int\psi_{0l}^-(v)d\nu_{t,x}(v).
\end{equation}
Let $\Omega$ be the set of common Lebesgue points of the functions $(t,x)\to\int p(v)d\nu_{t,x}(v)$, $p(v)\in F$, where
$F\subset C(\R)$ is a countable dense set. Since the set $F$ is countable, $\Omega$ is a set of full measure in $\Pi$. By the density of $F$ any point $(t,x)\in\Omega$ is a Lebesgue points of the functions $\int p(v)d\nu_{t,x}(v)$ for all
$p(v)\in C(\R)$. In particular, for each fixed $(t,x)\in\Omega$ the measure $\nu_{t,x}$ is uniquely determined.
Since identity (\ref{g3}) fulfils a.e. in $\Pi$, it holds at each point of $\Omega$. We fix such a point $(t,x)\in\Omega$ and denote $\nu=\nu_{t,x}$, $[a,b]=\Co\supp\nu$. We have to show that $\xi_0\varphi_0(v)+\xi_1\varphi_1(v)=\const$ on  $[a,b]$ for some $\xi=(\xi_0,\xi_1)\in\R^2$, $\xi\not=0$. If $\varphi_0(v)\equiv\const$ on $[a,b]$, we can take $\xi=(1,0)$, thus completing the proof. So, assume that $\varphi_0(v)$ is not constant on $[a,b]$ and, in particular, that $a<b$. We define a smaller segment $[a_1,b_1]$, where
\begin{align*}
a_1=\max\{ c\in [a,b] \ | \ \varphi_0(v)=\varphi_0(a) \ \forall v\in [a,c] \}, \\
b_1=\min\{ c\in [a,b] \ | \ \varphi_0(v)=\varphi_0(b) \ \forall v\in [c,b] \}.
\end{align*}
If $a_1\ge b_1$ then $\varphi_0(v)\equiv\const$ on $[a,b]$, which contradicts to our assumption. Therefore, $a\le a_1<b_1\le b$ and we can choose such $a_2,b_2\in (a_1,b_1)$ that $a_2<b_2$. Observe that $\varphi_0(v)$ cannot be constant on segments
$[a,a_2]$, $[b_2,b]$ (otherwise, $a_1\ge a_2$, $b_1\le b_2$, respectively). Therefore, there exist such $l_0\in [a,a_2]$, $k_0\in [b_2,b]$ that $\varphi_0(l_0)$, $\varphi_0(k_0)$
are extremum values of $\varphi_0(u)$ on the segments $[a,a_2]$, $[b_2,b]$, which are different from $\varphi_0(a)$, $\varphi_0(b)$, respectively. Then, the functions $\psi_{0k_0}^+(v)$, $\psi_{0l_0}^-(v)$ keep their sign and different from zero in neighborhoods of points $b$, $a$, respectively. This implies that
$$
\int\psi_{0k_0}^+(v)d\nu(v)\not=0, \quad \int\psi_{0l_0}^-(v)d\nu(v)\not=0.
$$
Then, by relation (\ref{g3}) (with $\nu_{t,x}=\nu$)
\begin{equation}\label{g4}
\int\psi_{1l}^-(v)d\nu(v)=c\int\psi_{0l}^-(v)d\nu(v) \quad \forall l\in [a,b_2],
\end{equation}
where
$$
c=\int\psi_{1k_0}^+(v)d\nu(v)/\int\psi_{0k_0}^+(v)d\nu(v).
$$
By relation (\ref{g3}) again
\begin{equation}\label{g4a}
\int\psi_{1k}^+(v)d\nu(v)=c_1\int\psi_{0k}^+(v)d\nu(v) \quad \forall k\in [a_2,b],
\end{equation}
where
$$
c_1=\int\psi_{1l_0}^-(v)d\nu(v)/\int\psi_{0l_0}^-(v)d\nu(v).
$$
Moreover, $c_1=c$ in view of (\ref{g4}). Introducing the function $p(v)=\varphi_1(v)-c\varphi_0(v)$, we can write equalities (\ref{g4}), (\ref{g4a}) in the form
\begin{align*}
\int\sign^-(v-l)(p(v)-p(l))d\nu(v)=0 \quad \forall l\in [a,b_2]; \\
\int\sign^+(v-k)(p(v)-p(k))d\nu(v)=0 \quad \forall k\in [a_2,b].
\end{align*}
By Lemma~\ref{lem3} and its Corollary~\ref{cor5}, we conclude that $p(v)$ is constant on each segment $[a,b_2]$, $[a_2,b]$. Since $a_2<b_2$, these segments intersect and therefore $p(v)=-c\varphi_0(v)+\varphi_1(v)\equiv\const$ on $[a,b]=\Co\supp\nu$, $\nu=\nu_{t,x}$.
This completes the proof.
\end{proof}
Notice, that the sequence $\nu_{t,x}^r$ of measure valued e.s. of equation (\ref{1'}) satisfies the requirements of Proposition~\ref{pro4} and we conclude that for a.e. $(t,x)\in\Pi$ there is a vector $\xi=(\xi_0,\xi_1)\in\R^2$, $\xi\not=0$, such that $\xi_0 b(v)+\xi_1 g(v)\equiv\const$ on $\Co\supp\nu_{t,x}$. In the case of linearly non-degenerate  flux Proposition~\ref{pro4} implies the strong convergence of the sequence $u_r$, even without the periodicity requirement.

\begin{corollary}\label{cor6} Assume that the function $\varphi(u)$ is not affine on nondegenerate intervals. Then the sequence $u_r\to u$ as $r\to\infty$ in $L^1_{loc}(\Pi)$ (strongly), and $u=u(t,x)$ is an e.s. of  (\ref{1}).
\end{corollary}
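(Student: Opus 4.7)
My plan is to combine the structural result from Proposition~\ref{pro4}---which, applied to the sequence $\nu^r_{t,x}$ (the hypothesis of that proposition is satisfied because $b$ is constant on each $\Co\supp\nu^r_{t,x}$, as $b^*\nu^r_{t,x}=\delta(u-u_r(t,x))$)---gives for a.e. $(t,x)\in\Pi$ a nonzero vector $(\xi_0,\xi_1)\in\R^2$ with $\xi_0 b(v)+\xi_1 g(v)\equiv\const$ on $I(t,x)\doteq\Co\supp\nu_{t,x}$, with the non-affine hypothesis on $\varphi$ to force $b(v)$ to be constant on $I(t,x)$ almost everywhere. Once this constancy is established, $b^*\nu_{t,x}$ becomes the Dirac mass at the constant value $u(t,x)$, so the Young measure of the sequence $u_r$ is regular; Theorem~\ref{thT} then upgrades the weak-$*$ convergence to strong convergence in $L^1_{loc}(\Pi)$. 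The e.s.~property is preserved by passing to the limit in the entropy relations satisfied by the $u_r$.

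To prove constancy of $b$ on $I(t,x)$, I argue by contradiction: if $b$ is not constant on $I(t,x)$, then, being continuous and nondecreasing, $b$ maps $I(t,x)$ onto a nondegenerate interval $J\subset\R$. Consider the two cases permitted by Proposition~\ref{pro4}. If $\xi_0=0$, the linear relation reads $g(v)\equiv c/\xi_1$ on $I(t,x)$, and evaluating at the (dense) set of $v$ with $b(v)\notin D$, where $g(v)=\varphi(b(v))$, yields $\varphi(u)=c/\xi_1$ for every $u\in J\setminus D$. If $\xi_0\ne 0$, then necessarily $\xi_1\ne 0$ (otherwise $b$ would be constant on $I(t,x)$, contradicting the assumption), and the same evaluation produces $\varphi(u)=-(\xi_0/\xi_1)u+c/\xi_1$ for every $u\in J\setminus D$. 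Since $D$ is at most countable and $J$ is a nondegenerate interval, in either case $\varphi$ coincides with an affine function on the co-countable subset $J\setminus D$, contradicting the hypothesis that $\varphi$ is not affine on any nondegenerate interval.

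Having established that $b(v)\equiv u(t,x)$ on $I(t,x)$, for every $p\in C(\R)$ the weak convergence $\nu^r_{t,x}\rightharpoonup\nu_{t,x}$ gives
\[
p(u_r)=\int p(b(v))\,d\nu^r_{t,x}(v)\;\longrightarrow\;\int p(b(v))\,d\nu_{t,x}(v)=p(u(t,x))
\]
weakly-$*$ in $L^\infty(\Pi)$; by Theorem~\ref{thT} this forces $u_r\to u$ strongly in $L^1_{loc}(\Pi)$. Finally, passing to the limit in (\ref{enti2}) restricted to test functions $f\in C_0^1(\Pi)$ (so the initial term is absent), using strong convergence of $u_r$ and weak convergence of $\nu^r_{t,x}$, one recovers the averaged entropy inequality for $\nu_{t,x}$; together with $b^*\nu_{t,x}=\delta(u-u(t,x))$, Definition~\ref{def1} identifies $u$ as an e.s.~of (\ref{1}). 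I expect the main obstacle to be the dichotomy step: one must interpret ``$\varphi$ not affine on nondegenerate intervals'' correctly in the presence of a possibly dense $D$, namely as the statement that $\varphi|_{J\setminus D}$ disagrees with every affine function on every nondegenerate $J$; under this natural reading the contradiction closes cleanly.
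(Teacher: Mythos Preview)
Your proof is correct and follows essentially the same route as the paper: invoke Proposition~\ref{pro4}, argue by contradiction that $b$ must be constant on $\Co\supp\nu_{t,x}$, deduce regularity of $b^*\nu_{t,x}$, apply Theorem~\ref{thT} for strong convergence, and pass to the limit in the entropy relations.

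One remark on your ``main obstacle'': the reinterpretation of the hypothesis you propose is unnecessary. If $\xi_0 b(v)+\xi_1 g(v)\equiv c$ on $I(t,x)$ with $\xi_1\ne 0$ and $b$ nonconstant, then for any $u_k\in D\cap\operatorname{int}(J)$ the full segment $[a_k,b_k]=b^{-1}(u_k)$ lies in $I(t,x)$; on it $b\equiv u_k$, so $g$ is constant there, but by construction (\ref{ext}) $g$ traverses $\varphi(u_k-),\varphi(u_k),\varphi(u_k+)$ on $[a_k,b_k]$, forcing $u_k\notin D$. Hence $D\cap\operatorname{int}(J)=\emptyset$ and $\varphi$ is genuinely affine on the nondegenerate open interval $\operatorname{int}(J)$, contradicting the hypothesis in its literal reading. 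This is what the paper's one-line ``This means that $\varphi(u)=cu+\const$ on the interior'' is implicitly using.
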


\begin{proof}
By Proposition~\ref{pro4} for a.e. $(t,x)\in\Pi$ there is $\xi=(\xi_0,\xi_1)\in\R^2$, $\xi\not=0$, such that $\xi_0 b(v)+\xi_1 g(v)\equiv\const$ on $\Co\supp\nu_{t,x}$. Let us show that for such $(t,x)$ the function $b(v)\equiv\const$ on
the segment $\Co\supp\nu_{t,x}$. In fact, assuming the contrary, we realize that the component $\xi_1\not=0$ and consequently $g(v)=cb(v)+\const$ for all $v\in \Co\supp\nu_{t,x}$, where $c=-\xi_0/\xi_1$. This means that $\varphi(u)=cu+\const$ on the interior of the non-degenerate interval $\{u=b(v) | v\in \Co\supp\nu_{t,x} \}$. But this contradicts to our assumption. We conclude that $b(v)$ is constant (equaled $u(t,x)$) on $\Co\supp\nu_{t,x}$. Therefore, the measure valued function $\bar\nu_{t,x}=b^*\nu_{t,x}$ is regular, $\bar\nu_{t,x}(u)=\delta(u-u(t,x))$.
In correspondence with Theorem~\ref{thT} the sequence $u_r$ converges to $u(t,x)$ strongly. Moreover, like in the proof of Proposition~\ref{pro3}, we conclude that the limit function $u=u(t,x)$ is an e.s. of (\ref{1}).
\end{proof}

Below, we prove Theorem~\ref{th4} in the general case.

\subsection{Proof of Theorem~\ref{th4}.}

Let $E$ be the set of full measure in $\R_+$, introduced in the proof of Proposition~\ref{pro1}, consisting of such $t>0$  that $(t,x)$ is a Lebesgue point of $u(t,x)$ for almost all $x\in\R$. We remind that $t\in E$ is a common Lebesgue point of all functions $\int_{\R} u(t,x)\rho(x)dx$, $\rho(x)\in L^1(\R)$. We can choose a sequence $t_m\in E$ such that $t_m\to 0$ as $m\to\infty$, and $u(t_m,x)\rightharpoonup u_0(x)\in L^\infty(\R)$ weakly-$*$ in $L^\infty(\R)$. It is clear that $u_0(x)$ is a periodic function, and that $u(t,x)\rightharpoonup u_0(x)$ as $E\ni t\to 0$. Let $\tilde u=\tilde u(t,x)$ be a unique (by Corollary~\ref{cor4}) e.s. of (\ref{1}), (\ref{ini}) with initial function $u_0$, and $\tilde\nu_{t,x}$ be a corresponding measure valued e.s. of equation (\ref{1'}). We are going to demonstrate that $u=\tilde u$. Clearly, this will complete the proof.
Applying the equalities
\begin{align*}
\frac{\partial}{\partial t} u+\frac{\partial}{\partial x}\int g(v)d\nu_{t,x}(v)=
\frac{\partial}{\partial t}\tilde u+\frac{\partial}{\partial x}\int g(v)d\tilde\nu_{t,x}(v)=0 \ \mbox{ in } \D'(\Pi)
\end{align*}
to the test functions $f=k^{-n}\alpha(t)\beta(x/k)$ and passing to the limit as $k\to\infty$, we derive, like in the proof of Theorem~\ref{th3}, that
$$
\frac{d}{dt}\int_0^1 u(t,x)dx=\frac{d}{dt}\int_0^1 \tilde u(t,x)dx=0 \ \mbox{ in } \D'(\R_+).
$$
This implies that for a.e. $t>0$
\begin{equation}\label{22}
\int_0^1 u(t,x)dx=\int_0^1 \tilde u(t,x)dx=I\doteq\int_0^1 u_0(x)dx,
\end{equation}
where we used the initial condition for e.s. $\tilde u$ and the fact that $\forall t\in E$
$$
\int_0^1 u(t,x)dx=\int_0^1 u(t_m,x)dx\mathop{\to}_{m\to\infty}\int_0^1 u_0(x)dx.
$$
Since in $\D'(\Pi)$
$$
\frac{\partial}{\partial t}(u-\tilde u)+\frac{\partial}{\partial x}\left(\int g(v)d\nu_{t,x}(v)-\int g(v)d\tilde\nu_{t,x}(v)\right)=0,
$$
there exists a Lipschitz function $P=P(t,x)$ (a potential) such that
$$
P_x=u-\tilde u, \quad P_t=\int g(v)d\tilde\nu_{t,x}(v)-\int g(v)d\nu_{t,x}(v) \quad \mbox{ in } \D'(\Pi).
$$
By the Lipschitz condition, this function admits continuous extension on the closure $\bar\Pi$. Since $P$ is defined up to an additive constant, we can assume that $P(0,0)=0$. It is clear that $P_x(t,x)\to P_x(0,x)$ weakly in $\D'(\R)$ as $t\to 0$. Taking into account that
$P_x(t,x)=u(t,x)-\tilde u(t,x)\rightharpoonup 0$ as $t\to 0$, running over a set of full measure, we find that $P_x(0,x)=0$ and therefore
$P(0,x)\equiv P(0,0)=0$. Further, by the spatial periodicity of $u-\tilde u$ and the condition
$$
\int_0^1(u-\tilde u)(t,x)dx=0
$$
(following from (\ref{22})), we find that the function $P(t,x)$ is spatially periodic as well, $P(t,x+1)=P(t,x)$.
Applying the doubling variables method \cite{Pan96} to the pair of measure valued e.s. $\nu_{t,x}$, $\tilde\nu_{t,x}$ of equation (\ref{1'}), we arrive at the relation
\begin{align}\label{23}
\frac{\partial}{\partial t}\iint |b(v)-b(w)|d\nu_{t,x}(v)d\tilde\nu_{t,x}(w)+\nonumber\\
\frac{\partial}{\partial x}\iint \sign(v-w)(g(v)-g(w))d\nu_{t,x}(v)d\tilde\nu_{t,x}(w)\le 0 \ \mbox{ in } \D'(\Pi).
\end{align}
Since $b(w)=\tilde u(t,x)$ on $\supp\tilde\nu_{t,x}$ and $b^*\nu_{t,x}=\bar\nu_{t,x}$, we can simplify the first integral
$$
\iint |b(v)-b(w)|d\nu_{t,x}(v)d\tilde\nu_{t,x}(w)=\int |b(v)-\tilde u(t,x)|d\nu_{t,x}(v)=\int |u-\tilde u(t,x)|d\bar\nu_{t,x}(u).
$$
We will need the following key relation
\begin{align}\label{24}
\iint |b(v)-b(w)|d\nu_{t,x}(v)d\tilde\nu_{t,x}(w)P_t(t,x)+\nonumber\\ \iint \sign(v-w)(g(v)-g(w))d\nu_{t,x}(v)d\tilde\nu_{t,x}(w)P_x(t,x)=0 \ \mbox{ a.e. in } \Pi.
\end{align}
We remind that
\begin{align*}
P_t(t,x)=\int g(w)d\tilde\nu_{t,x}(w)-\int g(v)d\nu_{t,x}(v)=-\iint (g(v)-g(w))d\nu_{t,x}(v)d\tilde\nu_{t,x}(w), \\
P_x(t,x)=u-\tilde u=\iint(b(v)-b(w))d\nu_{t,x}(v)d\tilde\nu_{t,x}(w),
\end{align*}
and (\ref{24}) can be written in the more symmetric form
\begin{align}\label{25}
\iint |b(v)-b(w)|d\nu_{t,x}(v)d\tilde\nu_{t,x}(w)\iint (g(v)-g(w))d\nu_{t,x}(v)d\tilde\nu_{t,x}(w)=\nonumber\\
\iint(b(v)-b(w))d\nu_{t,x}(v)d\tilde\nu_{t,x}(w)\iint \sign(v-w)(g(v)-g(w))d\nu_{t,x}(v)d\tilde\nu_{t,x}(w).
\end{align}
To prove (\ref{25}), we fix $(t,x)\in\Pi$, denote $\nu=\nu_{t,x}$, $\tilde\nu=\tilde\nu_{t,x}$, $[a,b]=\Co\supp\nu$,
$[a_1,b_1]=\Co\supp\tilde\nu$, and consider the following four cases:

(i) $[a,b]\cap [a_1,b_1]=\emptyset$. In this case $\sign(v-w)\equiv s$ is constant on $[a,b]\times[a_1,b_1]$. Therefore,
\begin{align*}
\iint |b(v)-b(w)|d\nu(v)d\tilde\nu(w)=s\iint (b(v)-b(w))d\nu(v)d\tilde\nu(w), \\
\iint \sign(v-w)(g(v)-g(w))d\nu(v)d\tilde\nu(w)=s\iint (g(v)-g(w))d\nu(v)d\tilde\nu(w)
\end{align*}
and (\ref{25}) follows;

(ii) $[a,b]\subset [a_1,b_1]$. Since $b(w)$ is constant on $[a_1,b_1]$, we find
\begin{equation}\label{26}
\iint (b(v)-b(w))d\nu(v)d\tilde\nu(w)=\iint |b(v)-b(w)|d\nu(v)d\tilde\nu(w)=0
\end{equation}
and (\ref{25}) is trivial;

(iii) $[a_1,b_1]\subset [a,b]$. In correspondence with Proposition~\ref{pro4} for some nonzero vector $(\xi_0,\xi_1)$ the function $\xi_0b(v)+\xi_1g(v)=\const$ on $[a,b]$. If $\xi_1=0$ then $b(v)\equiv\const$ on $[a,b]$, which implies (\ref{26}), and (\ref{25}) is trivially satisfied. For $\xi_1\not=0$ we find that $g(v)=cb(v)+\const$ on $[a,b]$, $c=-\xi_0/\xi_1$. Therefore,
\begin{align*}
\iint (g(v)-g(w))d\nu(v)d\tilde\nu(w)=c\iint (b(v)-b(w))d\nu(v)d\tilde\nu(w), \\
\iint \sign(v-w)(g(v)-g(w))d\nu(v)d\tilde\nu(w)=c\iint |b(v)-b(w)|d\nu(v)d\tilde\nu(w),
\end{align*}
and (\ref{25}) follows;

(iv) The remaining case: $a<a_1\le b<b_1$ or $a_1<a\le b_1<b$. We consider only the former subcase $a<a_1\le b<b_1$, the latter subcase is treated similarly. Since $b(w)\equiv b(b_1)$ on $[a_1,b_1]$ while $b(v)\le b(b_1)$ for all $v\in [a,b]$, we find that
\begin{equation}\label{27}
\iint |b(v)-b(w)|d\nu(v)d\tilde\nu(w)=-\iint (b(v)-b(w))d\nu(v)d\tilde\nu(w).
\end{equation}
Besides, if $b(v)\equiv\const$ on $[a,b]$ then $b(v)\equiv\const$ on $[a,b_1]=[a,b]\cup [a_1,b_1]$ and we again arrive at (\ref{26}), which readily implies the desired relation (\ref{25}). Thus, assume that $b(v)$ is not constant on $[a,b]$. In view of (\ref{27}) relation (\ref{25}) will follow from the equality
\begin{equation}\label{28}
\iint \sign(v-w)(g(v)-g(w))d\nu(v)d\tilde\nu(w)=-\iint (g(v)-g(w))d\nu(v)d\tilde\nu(w).
\end{equation}
By Proposition~\ref{pro4} we have $g(v)=cb(v)+\const$ on $[a,b]$, where $c=-\xi_0/\xi_1$ (remark that $\xi_1\not=0$, otherwise $b(v)\equiv\const$ on $[a,b]$, which contradicts our assumption). Therefore,
\begin{align*}
\iint \sign(v-w)(g(v)-g(w))d\nu(v)d\tilde\nu(w)= \\ \iint_{[a,b]\times [a_1,b]}\sign(v-w)(g(v)-g(w))d\nu(v)d\tilde\nu(w)-\\
\iint_{[a,b]\times (b,b_1]}(g(v)-g(w))d\nu(v)d\tilde\nu(w)=\\
c\iint_{[a,b]\times [a_1,b]}|b(v)-b(w)|d\nu(v)d\tilde\nu(w)-\iint_{[a,b]\times (b,b_1]}(g(v)-g(w))d\nu(v)d\tilde\nu(w)=\\
-c\iint_{[a,b]\times [a_1,b]}(b(v)-b(w))d\nu(v)d\tilde\nu(w)-\iint_{[a,b]\times (b,b_1]}(g(v)-g(w))d\nu(v)d\tilde\nu(w),
\end{align*}
where we use that $b(v)-b(w)=b(v)-b(b_1)\le 0$ for $v\in [a,b]$, $w\in [a_1,b]$. On the other hand,
\begin{align*}
\iint (g(v)-g(w))d\nu(v)d\tilde\nu(w)= \\ \iint_{[a,b]\times [a_1,b]}(g(v)-g(w))d\nu(v)d\tilde\nu(w)+
\iint_{[a,b]\times (b,b_1]}(g(v)-g(w))d\nu(v)d\tilde\nu(w)=\\
c\iint_{[a,b]\times [a_1,b]}(b(v)-b(w))d\nu(v)d\tilde\nu(w)+\iint_{[a,b]\times (b,b_1]}(g(v)-g(w))d\nu(v)d\tilde\nu(w),
\end{align*}
and (\ref{28}) follows. This completes the proof of relation (\ref{25}).

Let $\rho(r)=r^2/(1+r^2)$. Then the function $q=\rho(P(t,x))$ is nonnegative and Lipschitz. Moreover, by the chain rule for Sobolev derivatives $q_t=\rho'(P)P_t, q_x=\rho'(P)P_x$. Applying (\ref{23}) to the test function $qf$, where
$f=f(t,x)\in C_0^\infty(\Pi)$, $f\ge 0$, we obtain the relation
\begin{equation}\label{29}
\int_\Pi [BP_t+GP_x]f\rho'(P)dtdx+\int_\Pi [Bf_t+Gf_x]qdtdx\ge 0,
\end{equation}
where we denote
\begin{align*}
B=B(t,x)=\iint |b(v)-b(w)|d\nu_{t,x}(v)d\tilde\nu_{t,x}(w), \\ G=G(t,x)=\iint \sign(v-w)(g(v)-g(w))d\nu_{t,x}(v)d\tilde\nu_{t,x}(w).
\end{align*}
In view of relation (\ref{24}) $BP_t+GP_x=0$ a.e. on $\Pi$ and the first integral in (\ref{29}) disappears. Therefore,
$$
\int_\Pi [Bf_t+Gf_x]qdtdx\ge 0.
$$
Taking in this relation $f=k^{-1}\alpha(t)\beta(x/k)$, where $\alpha(t)\in C_0^1(\R_+)$, $\beta(y)\in C_0^1(\R)$ are nonnegative functions, $\int\beta(y)dy=1$, we arrive at the relation
$$
k^{-1}\int_{\Pi}B(t,x)q(t,x)\alpha'(t)\beta(x/k)dtdx+k^{-2}\int_\Pi G(t,x)q(t,x)\alpha(t)\beta'(x/k)dtdx\ge 0.
$$
In the limit as $k\to\infty$ the second term in this relation disappears while the first one
$$
k^{-1}\int_{\Pi}B(t,x)q(t,x)\alpha'(t)\beta(x/k)dtdx\mathop{\to}_{k\to\infty}\int_{\R^+\times [0,1]}B(t,x)q(t,x)\alpha'(t)dtdx,
$$
where we utilize the $x$-periodicity of $B(t,x)q(t,x)$, which allows to apply \cite[Lemma~2.1]{PaJDE}.
As a result, we get
$$
\int_{\R^+\times [0,1]}B(t,x)q(t,x)\alpha'(t)dtdx \quad \forall \alpha(t)\in C_0^1(\R_+), \alpha(t)\ge 0.
$$
This inequality means that
$$
\frac{d}{dt}\int_0^1 B(t,x)q(t,x)dx\le 0 \ \mbox{ in } \D'(\R_+),
$$
and implies that for $t,\tau\in E$, $t>\tau$,
\begin{equation}\label{30}
0\le\int_0^1 B(t,x)q(t,x)dx\le \int_0^1 B(\tau,x)q(\tau,x)dx,
\end{equation}
where $E\subset\R_+$ is a set of full measure.
Observe that $0\le q(\tau,x)\le |P(\tau,x)|=|P(\tau,x)-P(0,x)|\le L\tau$, where $L$ is a Lipschitz constant of $P$ while the function $B(t,x)$ is bounded. Therefore,
$$
\int_0^1 B(\tau,x)q(\tau,x)dx\mathop{\to}_{E\ni\tau\to 0} 0
$$
and it follows from (\ref{30}) that
$\int_0^1 B(t,x)q(t,x)dx=0$. Since $B,q\ge 0$, we find that $Bq=0$ a.e. on $\Pi$. Let $E\subset\Pi$ be the set where $q=0 \Leftrightarrow P=0$, that is, $E=P^{-1}(0)$. By the known properties of Lipschitz functions, $\nabla P=0$ a.e. on $E$.
In particular, $P_x=u-\tilde u=0$ a.e. in $E$. On the other hand, for $(t,x)\in\Pi\setminus E$ the function $q>0$ and therefore $B=0$ a.e. on this set. Since
$$
B(t,x)=\iint |b(v)-b(w)|d\nu_{t,x}(v)d\tilde\nu_{t,x}(w)=\int |b(v)-\tilde u|d\nu_{t,x}(v),
$$
we find that $b(v)=\tilde u$ on $\supp\nu_{t,x}$. In particular, again $u(t,x)=\int b(v)d\nu_{t,x}(v)=\tilde u(t,x)$.
We conclude that $u=\tilde u$ a.e. in $\Pi$, which completes the proof.

\section*{Acknowledgments}
The research was supported by the Russian Science Foundation, grant 22-21-00344.

\end{document}